\newcommand{\DD}{\textnormal{D}}
\theoremstyle{plain}
\newtheorem{Theo}{Theorem}[section]
\newtheorem{lem}[Theo]{Lemma}
\newtheorem{prop}[Theo]{Proposition}
\theoremstyle{plain} \theoremstyle{definition}
\newtheorem{defi}[Theo]{Definition}
\theoremstyle{remark}
\newtheorem{Rema}[Theo]{Remark}
\newtheorem*{rema*}{Remark}
\newtheorem*{remas}{Remarks}
\newcommand{\NN}{\mathbb{N}}
\newcommand{\RR}{\mathbb{R}}
\numberwithin{equation}{section}
\date{}
\begin{document}

\title[ Global well-posedness for stratified N.-S. system]
{Inviscid limit for axisymmetric stratified Navier-Stokes system}
\author[Samira Sulaiman]{Samira Sulaiman}
\address{IRMAR, Universit\'e de Rennes 1\\ Campus de
Beaulieu\\ 35~042 Rennes cedex\\ France}
\email{samira.sulaiman@univ-rennes1.fr}

\begin{abstract}
This paper is devoted to the study of the Cauchy problem for the stratified Navier-Stokes  system in space dimension three. In the first part of the paper, we prove the existence of a unique global solution $(v_\nu,\rho_\nu)$ for this system with axisymmetric initial data belonging to the Sobolev spaces $H^{s}\times H^{s-2}$ with $s>\frac{5}{2}.$ The bounds of the solution are uniform  with respect to the viscosity.
In the second part, we analyse the inviscid limit problem. We prove the strong convergence in the space $L^{\infty}_{\text{loc}}(\RR_+; H^{s}\times H^{s-2})$ of the viscous solutions $(v_\nu,\rho_\nu)_{\nu>0}$  to the solution $(v,\rho)$ of the stratified Euler system. 

\end{abstract}
\maketitle
\tableofcontents

\section{Introduction and main results}\label{A1}
In this paper, we consider the incompressible stratified Navier-Stokes  system in space dimension three
\begin{equation}\label{a} 
\left\{ \begin{array}{ll} 
\partial_{t}v_{\nu}+v_{\nu}\cdot\nabla v_{\nu}-\nu\Delta v_{\nu}+\nabla p_{\nu}=\rho_{\nu}\,e_{z},\qquad (t,x)\in\RR_{+}\times\RR^{3}\\ 
\partial_{t}\rho_{\nu}+v_{\nu}\cdot\nabla\rho_{\nu}-\Delta\rho_{\nu}=0\\
\textnormal{div}\,v_{\nu}=0\\
(v_{\nu},\rho_{\nu})_{| t=0}=(v^{0},\rho^{0}). 
\end{array} \right.
\end{equation} 
Here, the vector field $v_{\nu}=(v_{\nu}^1,v_{\nu}^2,v_{\nu}^3)$ stands for the velocity of the fluid and it is assumed to be  divergence-free, the scalar function $\rho_{\nu}$ denotes the density and  $p_{\nu}$ is the pressure. The \mbox{parameter  $\nu>0$} is the kinematic viscosity and the vector $e_{z}$ is given by $(0,0,1).$ Remark that the usual incompressible Navier-Stokes system arises as a particular case of \eqref{a}: it suffices to take $\rho=\textnormal{constant}$. It reads as follows:
\begin{equation}\label{b} 
\left\{ \begin{array}{ll} 
\partial_{t}v_{\nu}+v_{\nu}\cdot\nabla v_{\nu}-\nu\Delta v_{\nu}+\nabla p_{\nu}=0\\ 
\textnormal{div}\,v_{\nu}=0\\
{v_{\nu}}_{|t=0}=v^{0}.  
\end{array} \right.
\end{equation} 
The mathematical theory of the Navier-Stokes equations \eqref{b} was initiated by  Leray in \cite{JL2}. He proved the global existence of a weak global solution for the system  \eqref{b} in the energy space by using a compactness method. Nevertheless, the uniqueness of these solutions is only known in space dimension two. 
Few decades later, Fujita and Kato \cite{fk} proved the local well-posedness in the critical Sobolev space $\dot{H}^{\frac12}(\RR^3),$ by using a fixed point argument and taking benefit of the time decay of the heat semiflow. The global existence of these solutions is only proved  for small initial data and the question for large  data remains an outstanding open problem. For more discussion, we refer the reader for example to  the papers \cite{kt01,gl02,p96}. 

It seems that we can give a positive answer for the global existence when the initial data are not necessarily small but must have some special symmetry. Before going  further into the details, let us first write the equation of the vorticity  which plays a central  role in the theory of the  global well-posedness. For a given vector field $v,$ the vorticity $\omega$ is defined by  $\omega=\text{curl } v=\nabla\times v$ and in the case of the system \eqref{b} it  solves the transport-diffusion equation,
\begin{equation}\label{vort}
\partial_{t}\omega_{\nu}+v_{\nu}\cdot\nabla\omega_{\nu}-\nu\Delta\omega_{\nu}=\omega_{\nu}\cdot\nabla v_{\nu}.
\end{equation}
The main difficulty is related to  the dynamics of the stretching term $\omega_{\nu}\cdot\nabla v_{\nu}$ which is  very complex and generates a lot of unsolved problems . Now we will see how to use the  axisymmetry of the flows in order to get   a  better understanding  of  the stretching term. We start with the following definition:
\begin{defi}
We say that a vector field $v$ is axisymmetric (without swirl) if it takes the form:
\begin{equation*}
v(t,x)=v^{r}(t,r,z) e_{r}+v^{z}(t,r,z) e_{z},
\end{equation*}
where $z=x_3\;,\; x=(x_1, x_2, z)\;,\;r=(x_{1}^{2}+x_{2}^{2})^{\frac{1}{2}}\;\;\textnormal{and}\;\;(e_r, e_\theta, e_z)$ is the cylindrical basis of $\RR^3$ given by :
\begin{equation*}
e_r=\big(\frac{x_1}{r},\frac{x_2}{r},0\big)\quad e_\theta=\big(-\frac{x_2}{r}, \frac{x_1}{r},0\big)\quad\textnormal{and}\;\;\;e_z=(0,0,1).
\end{equation*}
The components $v^r$ and $v^z$ do not depend on the angular variable $\theta.$
\end{defi}
We  need in what follows to recall some basic algebraic properties related to some computations in the cylindrical coordinates system. For example, for an axisymmetric vector field $v$,   the operators $v\cdot\nabla$ and $\text{div}$ can be written under the form:
\begin{eqnarray}\label{f}
\nonumber v\cdot\nabla&=&v^{r}\partial_{r}+\frac{1}{r}v^{\theta}\partial_{\theta}+v^{z}\partial_{z}\\
&=& v^{r}\partial_{r}+v^{z}\partial_{z}
\end{eqnarray}
and
$$\textnormal{div}\,v= \partial_{r}v^{r}+\frac{v^r}{r}+\partial_{z}v^{z}.$$
The vorticity $\omega$ of the  vector field $v$ has the special form :
\begin{equation}\label{c}
\omega=(\partial_{z} v^{r}-\partial_{r} v^{z})e_{\theta} :=\omega^{\theta}e_{\theta}
\end{equation}
and the streching term reads $$\omega\cdot\nabla v=\frac{v^r}{r}\omega.$$
Consequently, the equation \eqref{vort} becomes
$$\partial_{t}\omega_{\nu}+v_{\nu}\cdot\nabla\omega_{\nu}-\nu\Delta\omega_{\nu}=\frac{v_{\nu}^{r}}{r}\omega_{\nu}.$$ 
The expression of the Laplacian operator in the cylindrical coordinates system is given by  {$\Delta=\partial_{rr}+\frac{1}{r}\partial_{r}+\partial_{zz}$.} Therefore, the scalar component  $\omega_{\nu}^{\theta}$ of the vorticity will satisfy the equation
$$\partial_{t}\omega_{\nu}^{\theta}+v_{\nu}\cdot\nabla\omega_{\nu}^{\theta}-\nu(\Delta\omega_{\nu}^{\theta} -\frac{\omega_{\nu}^{\theta}}{r^2})=\frac{v_{\nu}^{r}}{r}\omega_{\nu}^{\theta}.$$
We can easily chek that the quantity $\beta:=\frac{\omega_{\nu}^{\theta}}{r}$ solves the equation
$$\partial_{t}\beta+v\cdot\nabla\beta-\nu(\Delta+\frac{2}{r}\partial_{r})\beta=0.$$
This illustrates the fact that the advection term $v_{\nu}\cdot\nabla\omega_{\nu}^{\theta}$ has a smoothing effect and allows to kill the stretching term. Hence, we deduce that
for all $p\in[1,\infty],$
$$\Vert\beta(t)\Vert_{L^p}\le\Vert\beta^{0}\Vert_{L^p}.$$

These new conservation laws enable Ukhoviskii and  Iudovich \cite{ui68}, to get the global well-posedness under the assumption $v^{0}\in H^1$ and   $\omega_{0},\frac{\omega_{0}}{r}\in L^{2}\cap L^{\infty}.$ This result has been recently improved by many authors in various function spaces and where   the inviscid case $\nu=0$ is also treated, for more details see for example   \cite{a08,ahs08,d, hz09,lmn99, Taira}.

Concerning the inviscid limit problem, that is the convergence of the viscous solutions $(v_\nu)_{\nu>0}$ to the solution of the incompressible Euler equation, we will restrict ourseleves to the discussion of the following results. In \cite{M}, Majda proved  that for $v^{0}\in H^{s}$ with $s>\frac{5}{2},$ the solutions $(v_{\nu})_{\nu>0}$ converge in $L^2$ norm to the unique solution $v$ of the Euler system and the rate convergence is of order $\nu t.$ By using elementary interpolation argument we deduce the strong convergence in the Sobolev spaces $H^{\eta}, \forall \eta<s$. We note that this result is local in time in space dimension $3$ and global in space dimension $2$.  Recently, Masmoudi  proved in \cite{nm06} the strong convergence in the same space of the initial data $H^s$ and his proof is based on the use of a cut-off procedure. We mention that the inviscid limit problem in the context of axisymmetric flows was studied in \cite{hz09}.

Let us now move to the stratified Navier-Stokes system \eqref{a} which has been intensively studied in the last decades and many results were dedicated to the global well-posedness problem. 

The case of stratified Euler equations with axisymmetric initial data was initiated by Hmidi and Rousset in  \cite{tf010}. This system is described by
\begin{equation}\label{fd1} 
\left\{ \begin{array}{ll} 
\partial_{t}v+v\cdot\nabla v+\nabla p=\rho\,e_{z}\\ 
\partial_{t}\rho+v\cdot\nabla\rho-\Delta\rho=0\\
\textnormal{div}\,v=0\\
v_{| t=0}=v^{0}, \quad \rho_{| t=0}=\rho^{0}.  
\end{array} \right.
\end{equation}
To achieve the global existence program one needs to get new a priori estimates especially for  the function $\zeta:=\frac{\omega_\theta}{r}$ which solves the equation,
$$\partial_t\zeta+v\cdot\nabla\zeta=-\frac{\partial_r\rho}{r}.$$
We observe that this equation induces a loss of two derivatives on the density which is exactly what we expect to win from the transport-diffusion equation. Thus the coupling in the  model \eqref{fd1} is critical and the issue of the global existence requires more refined analysis. In \cite{tf010}, the authors gave a positive answer under the assumptions 
$$v^{0}\in H^{s},\quad \rho^{0}\in H^{s-2}\cap L^{m},\, s>{5}/{2},\, m>6\quad\text{and }\quad r^{2}\rho^{0}\in L^2.$$ 
Their basic idea consists in using the coupled function $\Gamma:=\zeta+\frac{\partial_r}{r}\Delta^{-1}\rho$ which satisfies the transport equation 
$$\partial_{t}\Gamma+v\cdot\nabla\Gamma=-\Big[\frac{\partial_r}{r}\Delta^{-1},v\cdot\nabla\Big]\rho.$$
Since the operator $\frac{\partial_r}{r}\Delta^{-1}$  behaves like Riesz transform on the class of axisymmetric functions, the estimate of $\|\zeta(t)\|_{L^{3,1}}$ is equivalent to bound $\|\Gamma(t)\|_{L^{3,1}}$. Therefore the difficulty reduces to the estimate of the singular  commutator which raises in the equation of $\Gamma.$ For this purpose the authors used intensively the axisymmetric structure of the velocity combined with some tools of harmonic analysis and paradifferential calculus. The result of \cite{tf010} was extended in \cite{sul011} by the author of this paper to the framework of critical Besov spaces. More precisely, the global existence  was proved for 
$$ v^{0}\in B_{2,1}^{\frac52},\quad \rho^{0}\in  B_{2,1}^{\frac12}\cap L^{m},\, m>6\quad\text{and }\quad r^{2}\rho^{0}\in L^2.$$

The  aim of this paper is twofold. Firstly, we extend the result of \cite{tf010} to the stratified  Navier-Stokes system \eqref{a} with uniform bounds with respect to the viscosity but for the subcritical regularities, that is, $(v^0,\rho^0)\in H^{s}\times H^{s-2}$ with $s>\frac 52.$ Secondly, we analyze the inviscid limit problem and   we show the strong convergence of the solutions $(v_\nu,\rho_\nu)$ of the system \eqref{a} to the one of \eqref{fd1} in the same space of initial data. We point out that our approach for the last point is completely  different from one's of Masmoudi work \cite{nm06} for the incompressible Navier-Stokes.\\

First of all, we introduce the following space:
$$u\in\chi^{s}_{m}\iff u\in H^{s-2}\cap L^{m}\;\textnormal{and such that}\;\;r^{2}u\in L^2.$$
We state now our main result. 
\begin{Theo}\label{theo1} Let $s>\frac{5}{2},$ $v^{0}\in H^{s}$ be an axisymmetric divergence-free vector field without swirl and $\rho^{0}\in \chi^{s}_{m}$ with $m>6$ an axisymmetric function. 
Then there exists a unique global solution $(v_{\nu},\rho_{\nu})$ for the system \eqref{a} such that,
\begin{equation*}
v_{\nu}\in\mathcal{C}(\RR_+; H^{s})\;\;\textnormal{and}\;\rho_{\nu}\in \mathcal{C}(\RR_+; \chi_{m}^{s})\cap L^{1}_{loc}(\RR_+; Lip),
\end{equation*}
with uniform bounds with respect to the viscosity.\\
Moreover, for any $T>0$ we have
$$
\lim_{\nu\to0}\|(v_\nu-v,\rho_\nu-\rho)\|_{L^\infty_T (H^s\times H^{s-2})}=0,
$$ 
where  $(v,\rho)$ is the solution of  the system \eqref{fd1} associated to the initial data $(v^0,\rho^0).$
\end{Theo}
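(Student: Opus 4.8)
The plan is to treat the two assertions separately, first establishing the $\nu$-uniform global bounds and then extracting the inviscid limit from them. For the \emph{uniform global existence} I would follow the scheme of Hmidi--Rousset \cite{tf010}, carrying the viscosity throughout and checking that every constant is independent of $\nu$. After a standard approximation/fixed-point construction giving a local solution in $H^s\times H^{s-2}$, the whole difficulty is a global a priori bound. The transport-diffusion equation for $\rho_\nu$ has a \emph{unit} diffusion coefficient, so by the maximum principle $\|\rho_\nu(t)\|_{L^p}\le\|\rho^0\|_{L^p}$ for every $p$ and $\rho_\nu$ enjoys genuine, $\nu$-independent parabolic smoothing. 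The critical object is $\zeta_\nu=\omega_\nu^\theta/r$; as in the Euler case one removes the two-derivative loss coming from $-\partial_r\rho_\nu/r$ by passing to the coupled function $\Gamma_\nu=\zeta_\nu+\frac{\partial_r}{r}\Delta^{-1}\rho_\nu$, which solves
\[
\partial_t\Gamma_\nu+v_\nu\cdot\nabla\Gamma_\nu-\nu\big(\Delta+\tfrac{2}{r}\partial_r\big)\Gamma_\nu=-\Big[\tfrac{\partial_r}{r}\Delta^{-1},v_\nu\cdot\nabla\Big]\rho_\nu-\nu\big(\Delta+\tfrac{2}{r}\partial_r\big)\tfrac{\partial_r}{r}\Delta^{-1}\rho_\nu .
\]
The transport-diffusion semigroup generated by $\Delta+\frac2r\partial_r$ is an $L^{3,1}$ contraction, so $\|\Gamma_\nu(t)\|_{L^{3,1}}$ is controlled by the time integral of the singular commutator (estimated exactly as in \cite{tf010}) plus the extra viscous term, the latter being harmless because it carries a factor $\nu$ and the order-two operator $(\Delta+\frac2r\partial_r)\frac{\partial_r}{r}\Delta^{-1}$ is absorbed by the unit-coefficient smoothing of $\rho_\nu$. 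Since $\frac{\partial_r}{r}\Delta^{-1}$ behaves like a Riesz transform on axisymmetric functions, this bounds $\|\zeta_\nu(t)\|_{L^{3,1}}$, hence $\|v_\nu^r/r\|_{L^\infty}$ and ultimately $\|\nabla v_\nu\|_{L^\infty}\in L^1_{loc}$, uniformly in $\nu$. A Beale--Kato--Majda/Gronwall argument then propagates the $H^s\times H^{s-2}$ norms globally with $\nu$-independent bounds.

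For the \emph{inviscid limit} set $w_\nu=v_\nu-v$, $\sigma_\nu=\rho_\nu-\rho$, with $w_\nu(0)=\sigma_\nu(0)=0$. Subtracting \eqref{fd1} from \eqref{a} gives
\[
\partial_tw_\nu+v_\nu\cdot\nabla w_\nu+w_\nu\cdot\nabla v-\nu\Delta w_\nu+\nabla(p_\nu-p)=\sigma_\nu e_z+\nu\Delta v,\quad
\partial_t\sigma_\nu+v_\nu\cdot\nabla\sigma_\nu+w_\nu\cdot\nabla\rho-\Delta\sigma_\nu=0 .
\]
Two features matter. First, the density-difference equation has the same unit diffusion as both systems, so there is no viscous mismatch there. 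Second, the velocity-difference equation carries the forcing $\nu\Delta v$, and since the Euler velocity $v$ only lives in $H^s$ this term is \emph{not} small in $H^s$: a brute-force $H^s$ energy estimate would require $v\in H^{s+1}$ (the top-order contribution of $w_\nu\cdot\nabla v$ forces this as well), or would produce a catastrophic $1/\nu$ when one tries to absorb $\nu\Delta v$ into the dissipation. Together with the critical buoyancy coupling $\sigma_\nu e_z$, which at top order costs two derivatives on the density, this is why the endpoint regularity cannot be reached by a single energy estimate.

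My strategy is to separate rate from regularity. At the $L^2$ level everything closes cleanly: $\langle w_\nu\cdot\nabla v,w_\nu\rangle_{L^2}$ is bounded by $\|\nabla v\|_{L^\infty}\|w_\nu\|_{L^2}^2$, the buoyancy by $\|\sigma_\nu\|_{L^2}\|w_\nu\|_{L^2}$, and after integration by parts the viscous forcing contributes only $\tfrac\nu2\|v\|_{\dot H^1}^2\to0$. Coupling this with the $L^2$ estimate for $\sigma_\nu$ and Gronwall yields $\|(w_\nu,\sigma_\nu)(t)\|_{L^2\times L^2}\lesssim \nu\,e^{C(t)}\to0$. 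Interpolating this rate against the $\nu$-uniform bounds of the first part gives convergence in $H^{s'}\times H^{s''}$ for every $s'<s$ and $s''<s-2$. To upgrade to the endpoint I would prove a \emph{uniform tightness} statement: writing $S_N$ for the Littlewood--Paley truncation to frequencies $\lesssim 2^N$,
\[
\lim_{N\to\infty}\ \sup_{\nu>0}\ \big\|(I-S_N)v_\nu\big\|_{L^\infty_T H^s}=0,\qquad \lim_{N\to\infty}\ \sup_{\nu>0}\ \big\|(I-S_N)\rho_\nu\big\|_{L^\infty_T H^{s-2}}=0 .
\]
Granting this, an $\varepsilon/3$ argument closes the proof: bound $\|w_\nu\|_{H^s}\le\|S_Nw_\nu\|_{H^s}+\|(I-S_N)v_\nu\|_{H^s}+\|(I-S_N)v\|_{H^s}$, choose $N$ large so that the last two terms and the density analogues are $<\varepsilon/3$ uniformly in $\nu$ (tightness, together with the vanishing high-frequency tails of $v\in H^s$ and $\rho\in H^{s-2}$), then choose $\nu$ small so that $\|S_Nw_\nu\|_{H^s}\le 2^{N(s-s')}\|w_\nu\|_{H^{s'}}<\varepsilon/3$ by the $H^{s'}$ convergence.

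The main obstacle is the tightness estimate, and in particular its velocity component. For the density it is comparatively soft, since the genuine unit diffusion damps high frequencies. For $v_\nu$ the viscous damping degenerates as $\nu\to0$, so tightness cannot follow from the uniform $H^s$ bound alone; instead I would localize the $H^s$ energy estimate of the first part in frequency, applying $(I-S_N)$ and testing against $(I-S_N)v_\nu$. The viscous term $-\nu\|(I-S_N)v_\nu\|_{\dot H^{s+1}}^2$ then appears with a \emph{favorable} sign and may simply be discarded, while the high-to-low frequency commutator remainders produced by $(I-S_N)$ are small for large $N$ uniformly in $\nu$; the buoyancy tail, which again loses two derivatives, is absorbed by exactly the same $\Gamma_\nu$-coupling and singular-commutator analysis used for the global bound. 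A Gronwall inequality with the $\nu$-uniform coefficient $C(t)=C(\|\nabla v_\nu\|_{L^\infty}+\cdots)$ then gives $\|(I-S_N)v_\nu(t)\|_{H^s}^2\lesssim e^{C(t)}\big(\|(I-S_N)v^0\|_{H^s}^2+o_N(1)\big)$, whose right-hand side vanishes as $N\to\infty$ uniformly in $\nu$ because the initial tail does. Verifying that the frequency-localized commutators and the buoyancy remainder are uniformly small is the technical heart of the argument.
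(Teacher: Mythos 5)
The genuine gap is in your first part, the uniform global bounds. You run the Hmidi--Rousset coupling $\Gamma_\nu=\zeta_\nu+\frac{\partial_r}{r}\Delta^{-1}\rho_\nu$ and dismiss the viscous correction $\nu\big(\Delta+\frac{2}{r}\partial_r\big)\frac{\partial_r}{r}\Delta^{-1}\rho_\nu$ as harmless because it ``is absorbed by the unit-coefficient smoothing of $\rho_\nu$.'' That single sentence is where the whole difficulty of the paper lives, and as stated it does not go through; the paper explicitly abandons the method of \cite{tf010} because it is rigid and fails in the viscous case, and this term is the reason. By Lemma \ref{lem1}, $\frac{\partial_r}{r}=\sum_{i,j}b_{ij}\partial_{ij}$ with merely bounded coefficients, so the correction is $\nu$ times a genuinely second-order operator acting on $\rho_\nu$, and your $L^{3,1}$ bound for $\Gamma_\nu$ requires controlling $\int_0^t\Vert\nabla^2\rho_\nu\Vert_{L^{3,1}}\,d\tau$ uniformly in $\nu$. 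The prefactor $\nu$ buys nothing: you need the estimate for all $\nu\in(0,1]$, not only in the limit. And the ``unit-coefficient smoothing'' of the density is not free: because of the advection $v_\nu\cdot\nabla\rho_\nu$, the maximal smoothing estimate (Proposition \ref{pro xc}) produces exactly the commutator $\sum_j\Vert[\Delta_j,v_\nu\cdot\nabla]\rho_\nu\Vert_{L^p}$, whose estimate (Proposition \ref{prop jh}) is the hardest technical part of the paper; moreover no such maximal smoothing is available directly in the Lorentz space $L^{3,1}$ --- the second remark following Theorem \ref{theo1} identifies precisely this as the obstruction to the critical case. So to make your ``absorption'' rigorous you must pass through $L^2\cap L^{\bar p}$, $\bar p>3$, and prove the $L^p$ commutator estimate anyway; at that point the coupling $\Gamma_\nu$ is a detour, since once $\Vert\nabla^2\rho_\nu\Vert_{L^1_t(L^2\cap L^{\bar p})}$ is under control one bounds $\Vert\frac{\partial_r\rho_\nu}{r}\Vert_{L^1_t(L^2\cap L^{\bar p})}$ directly, applies the maximum principle to the $\zeta_\nu$ equation itself, and closes with Gronwall through the $\Vert x_h\rho_\nu\Vert_{L^1_tB^0_{\infty,1}}$ estimate. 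That is the paper's actual proof, and it is exactly the content your proposal compresses into an unsubstantiated claim.

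Your second part, the inviscid limit, is essentially the paper's argument: the $L^2$ energy estimate with rate $\nu\Phi(t)$, then an endpoint upgrade via uniform-in-$\nu$ high-frequency smallness and a frequency-splitting argument. The paper implements your ``tightness'' through heterogeneous Sobolev norms: by Lemma \ref{Besov} the data lie in $H^{s,\Psi}\times H^{s-2,\Psi}$ for some $\Psi\in\mathcal{U}_\infty$, Proposition \ref{h1} propagates these norms uniformly in $\nu$ (discarding the viscous term by sign, as you propose, with the commutator handled by the weighted estimate of Proposition \ref{pr4}), and then $\Vert(I-S_N)v_\nu\Vert_{H^s}\le\Psi(N)^{-1}\Vert v_\nu\Vert_{H^{s,\Psi}}$ is your equi-decay, the splitting frequency being optimized by $2^{2Ms}\approx\nu^{-1}$ to yield the explicit rate $\sqrt\nu+\Psi\big(\log\frac1\nu\big)^{-1}$. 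Two corrections to your sketch: convergence in $H^{s'}$, $s'<s$, already follows from interpolating the $L^2$ rate with the uniform $H^s$ bound, so the delicate point is only the endpoint; and the buoyancy tail in the $H^s$ energy estimate requires a uniform bound on $\Vert(I-S_N)\rho_\nu\Vert_{L^1_TH^{s}}$ (two derivatives above the a priori regularity of $\rho_\nu$), which comes from the frequency-localized smoothing of the density equation, not from the $\Gamma_\nu$-coupling, which lives at the $L^{3,1}$ level of $\zeta_\nu$.
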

Before giving some details about the proof few remarks are in order.
\begin{remas}
$(1)$ From the proof the rate convergence in $L^2$ space is of order $\nu t$. More precisely,
$$
\|(v_\nu-v,\rho_\nu-\rho)(t)\|_{ L^2}\le \nu t\,f(t),
$$
with $f$ is an explicit function depending only on the size of the initial data and the variable time $t.$\\
$(2)$ Our approach can not allow to treat the critical case $v_0\in B_{2,1}^\frac52, \rho_0\in B_{2,1}^{\frac12}.$ Even though, we can extend the result of the Proposition \ref{prop jh} to the Lorentz space $L^{3,1},$ the difficulty relies on the establishment of maximal smoothing effects for a transport-diffusion model in Lorentz space.
\end{remas}
Now, we will discuss the main ideas of the proof of Theorem \ref{theo1} and to simplify the notation, we will use $(v,\rho)$ instead of $(v_\nu,\rho_\nu).$ First, recall that the vorticity $\omega=\omega^\theta e_\theta$ satisfies 
$$\partial_{t}\omega+v\cdot\nabla\omega-\nu\Delta\omega=\frac{v^{r}}{r}\omega+curl(\rho e_z).$$
This yields
$$\partial_{t}\omega_{\theta}+v\cdot\nabla\omega_{\theta}-\nu(\Delta\omega_{\theta}-\frac{\omega_{\theta}}{r^2})=\frac{v^{r}}{r}\omega_{\theta}-\partial_{r}\rho.$$
It follows that $\zeta:=\frac{\omega_\theta}{r}$ obeys to the equation,
\begin{equation}\label{b1}
\partial_{t}\zeta+v\cdot\nabla\zeta-\nu(\Delta+\frac{2}{r}\partial_{r})\zeta=-\frac{\partial_{r}\rho}{r}.
\end{equation}
At this stage, we can try to use the method of \cite{tf010} but unfortunately  it  seems to be rigid and fails four the viscous case. Our alternative approach relies on the use of the maximal smoothing effects combined with a suitable commutator estimate. To be more precise, we use 
 interpolation argument combined with the maximum principle leading  for $\bar{p}>3$ to
 
\begin{eqnarray*}
\Vert\zeta(t)\Vert_{L^{3,1}}&\le& C\Vert\zeta(t)\Vert_{L^{2}\cap L^{\bar{p}}}\\
&\le& C\Vert\zeta^{0}\Vert_{L^{2}\cap L^{\bar{p}}}+C\int_{0}^{t}\Big\Vert\frac{\partial_{r}\rho}{r}(\tau)\Big\Vert_{L^{2}\cap L^{\bar{p}}}d\tau.
\end{eqnarray*}
As we will see the restriction of the operator $\frac{\partial_r}{r}$ on the class of axisymmetric functions is dominated by the second derivative:
$$\Vert\frac{\partial_r}{r}\rho(t)\Vert_{L^p}\le C\Vert\nabla^{2}\rho(t)\Vert_{L^p}.$$ 
To estimate this latter quantity we use the maximal smoothing effect of the heatsemi flow and the difficulty reduces to the analysis of the commutator 
$\sum_{j}\|[\Delta_{j},v\cdot\nabla]\rho\|_{L^p}$ which is the hard technical part of this paper. We shall prove in Proposition \ref{prop jh} that  for $p\in]1,+\infty[$ 
\begin{equation}\label{k}
\displaystyle\sum_{j\ge -1}\Big\Vert[\Delta_{j}, v\cdot\nabla]\rho\Big\Vert_{L^p}\lesssim\Vert v \Vert_{L^2}\Vert\rho\Vert_{L^p}+\Vert\frac{\omega_{\theta}}{r}\Vert_{L^{3,1}\cap L^{p}}\bigg(\Vert x_{h}\rho\Vert_{B^{0}_{\infty,1}}+\Vert\rho\Vert_{B^{0}_{p,1}\cap L^{\infty}}\bigg),
\end{equation}
with the notation $x_{h}:=(x_1,x_2).$ 
Consequently, we obtain 
\begin{equation}\label{az}
\Vert\zeta(t)\Vert_{L^{2}\cap L^{\bar{p}}}\le C(t)e^{C\Vert x_{h}\rho\Vert_{L_{t}^{1}B^{0}_{\infty,1}}}.
\end{equation}
To estimate  $\Vert x_{h}\rho\Vert_{L_{t}^{1}B^{0}_{\infty,1}},$ we use the following inequality proved in \cite{tf010},  
$$\Vert x_{h}\rho\Vert_{L_{t}^{1}B^{0}_{\infty,1}}\le C_{0}(t)\Big(1+\int_{0}^{t}h(\tau)\log\big(2+\Vert\zeta\Vert_{L_{\tau}^{\infty}L^{3,1}}\big)d\tau\Big),$$
where $t\mapsto C_{0}(t)$ is a  given continuous  function and $t\mapsto h(t)$ belongs to $L_{loc}^{1}(\RR_+).$  Hence we  conclude by using \eqref{k} and \eqref{az} combined with  Gronwall inequality leading to a global bound for $\|\zeta(t)\|_{L^{3,1}},$ uniformly with respect to the viscosity.

Concerning the inviscid limit, we prove first the strong convergence in $L_{loc}^{\infty}(\RR_+;L^2)$ by performing energy estimates.  
However the strong convergence in the space of the initial data $H^{s}\times H^{s-2}$ is more subtle. We use for this purpose some interpolation arguments combined with  an additional frequency decay of the energy uniformly with respect to $t$ and $\nu$ in the spirit of \cite{t012,stcomp}.

This paper is organized as follows: In section \ref{A2}, we fix some notation, give the definition of Besov and Lorentz spaces and state some smoothing effects for a transport-diffusion equation. In section \ref{A3}, we 
study the estimate of the commutator $\sum_{j\ge-1}[\Delta_{j},v\cdot\nabla]\rho$ in $L^p$ spaces. In the last section, we give the proof of Theorem \ref{theo1} which will be done in several steps.

\section{Tools and functional spaces}\label{A2}
In this preliminary section, we introduce some basic notations and recall the definitions of usual and heterogeneous Besov spaces. We give also some results about Lorentz spaces and discuss some well-known results about the Littlewood-Paley decomposition and a transport-diffusion equation used later. 
\subsection{Notation}
$\bullet$ For any positive $A$ and $B$, the notation  $A\lesssim B$ means that there exists a positive constant $C$ independent of $A$ and $B$ and such that $A\leqslant CB$.\\
$\bullet$ For any pair of operators $X$ and $Y$ acting on some Banach space $\mathcal{A}$, the commutator $[X,Y]$ is defined by $XY-YX.$\\
$\bullet$ For $l\in\NN$, we set
$$\Phi_{l}(t)=C_{0}\underbrace{\exp\Big(...\exp}_{l-times}(C_{0} t^{\frac{19}{6}})...\Big),$$ where $C_{0}$ depends on the norms of the initial data and its value may vary from line to line up to some absolute constants, but it does not depend on the viscosity $\nu.$ We will make an intensive use of the following trivial facts
\begin{equation*}
\int^{t}_{0}\Phi_{l}(\tau) d\tau \le \Phi_{l}(t)\qquad\textnormal{and}\qquad \exp\Big(\int^{t}_{0}\Phi_{l}(\tau) d\tau\Big)\le \Phi_{l+1}(t).
\end{equation*}
To define Besov spaces we need the following dyadic unity partition (see \cite{che98,gl02}). 
\begin{prop}\label{prop15} There exists two nonnegative radial functions $\chi\in\mathscr{D}(\mathbb{R}^{3})$ and $\varphi\in\mathscr{D}(\mathbb{R}^{3}\backslash\{0\})$ such that, 
$$\chi(\xi)+\displaystyle \sum_{j\ge 0}\varphi(2^{-j}\xi)=1, \quad\forall \xi\in\mathbb{R}^{3},$$
$$\vert p-j\vert\ge 2\Rightarrow\mbox{supp }{\varphi}(2^{-p}\cdot)\cap\mbox{supp }{\varphi}(2^{-j}\cdot)=\varnothing,$$
$$j\ge 1\Rightarrow \mbox{supp }{\chi}\cap\mbox{supp }{\varphi}(2^{-j}\cdot)=\varnothing.$$
\end{prop}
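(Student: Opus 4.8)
The plan is to carry out the classical ``difference of two dilates'' construction, in which I fix the low-frequency cutoff $\chi$ first and then manufacture $\varphi$ from it, rather than building the two functions independently; this automatically forces the desired telescoping identity. To begin, I would choose a smooth nonincreasing profile $\theta:\mathbb{R}_+\to[0,1]$ with $\theta\equiv 1$ on $[0,\tfrac34]$ and $\theta\equiv 0$ on $[\tfrac43,\infty)$, and set $\chi(\xi)=\theta(|\xi|)$. Since $\theta$ is constant near the origin, $\chi$ is smooth there as well, and it is radial, nonnegative, with $\operatorname{supp}\chi\subset\{|\xi|\le\tfrac43\}$, so $\chi\in\mathscr{D}(\mathbb{R}^3)$.

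Next I would define $\varphi(\xi):=\chi(\xi/2)-\chi(\xi)$. A direct inspection of where $\chi(\cdot/2)$ and $\chi(\cdot)$ take the values $0$ and $1$ shows that $\operatorname{supp}\varphi\subset\{\tfrac34\le|\xi|\le\tfrac83\}$, an annulus bounded away from the origin, so $\varphi\in\mathscr{D}(\mathbb{R}^3\setminus\{0\})$. Writing $\varphi(2^{-j}\xi)=\chi(2^{-j-1}\xi)-\chi(2^{-j}\xi)$ and summing telescopically yields, for every $N$,
$$\chi(\xi)+\sum_{j=0}^{N}\varphi(2^{-j}\xi)=\chi(2^{-N-1}\xi).$$
Because $\chi$ is continuous with $\chi(0)=1$, and the annular supports make the series locally finite, letting $N\to\infty$ gives $\chi(\xi)+\sum_{j\ge 0}\varphi(2^{-j}\xi)=1$ for every $\xi\in\mathbb{R}^3$, which is the first claimed identity.

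It remains to check nonnegativity and the two separation conditions, all of which reduce to elementary radii comparisons. Nonnegativity of $\varphi$ is exactly where the monotonicity of $\theta$ is needed: since $\theta$ is nonincreasing and $|\xi/2|\le|\xi|$, one has $\chi(\xi/2)\ge\chi(\xi)$, whence $\varphi\ge 0$. For the separation, I would track only the radii $\operatorname{supp}\varphi(2^{-j}\cdot)\subset\{\tfrac34\,2^{j}\le|\xi|\le\tfrac83\,2^{j}\}$. If $p\ge j+2$, the inner radius $\tfrac34\,2^{p}\ge 3\cdot 2^{j}$ exceeds the outer radius $\tfrac83\,2^{j}$ of the $j$-th annulus, so the supports are disjoint whenever $|p-j|\ge 2$. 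Similarly, for $j\ge 1$ the inner radius $\tfrac34\,2^{j}\ge\tfrac32$ exceeds $\tfrac43$, so $\operatorname{supp}\varphi(2^{-j}\cdot)$ is disjoint from $\operatorname{supp}\chi\subset\{|\xi|\le\tfrac43\}$.

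Since everything reduces to one-variable cutoff constructions and comparisons of radii, there is no genuine analytic obstacle here. The only point that requires a little care is choosing the numerical radii $\tfrac34,\tfrac43,\tfrac83$ so that all three conditions hold simultaneously for the dilation factor $2$; the profile $\theta$ selected above is designed precisely so that the gaps between successive annuli are wide enough to yield the required disjointness.
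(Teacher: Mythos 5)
Your proof is correct: the difference-of-dilates construction $\varphi(\xi)=\chi(\xi/2)-\chi(\xi)$ gives nonnegativity from the monotonicity of the profile, the telescoping sum stabilizes at $1$ since $\chi\equiv 1$ near the origin, and the radii comparisons $\tfrac34\,2^{p}\ge 3\cdot 2^{j}>\tfrac83\,2^{j}$ for $p\ge j+2$ and $\tfrac34\,2^{j}\ge\tfrac32>\tfrac43$ for $j\ge 1$ settle both disjointness claims. The paper gives no proof of its own, referring to \cite{che98,gl02}, and the argument found there is precisely this classical telescoping construction, so your approach is essentially the same as the intended one.
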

Let $f\in\mathcal{S}^{\prime}(\RR^3),$ we define the nonhomogeneous Littlewood-Paley operators by
\begin{equation*}
\Delta_{-1}f=\chi(\DD)f,\;\;\forall j\ge 0,\;\;\Delta_{j}f=\varphi(2^{-j}\DD)f\;\;\textnormal{and}\;\;S_{j}f=\displaystyle \sum_{-1\le k\le j-1}\Delta_{k}f.
\end{equation*}
It may be easily checked that $$f=\sum_{j\ge -1}\Delta_{j}f,\;\;\forall f \in \mathcal{S}^{\prime}(\RR^{3}).$$
Moreover, the Littlewood-Paley operators satisfies the property of almost orthogonality: for any $f,g\in\mathcal{S}^{\prime}(\RR^3),$
$$\Delta_{p}\Delta_{j}f=0\qquad \textnormal{if} \qquad \vert p-j \vert \geqslant 2 \qquad$$
$$\Delta_{p}(S_{j-1}f\Delta_{j}g)=0 \qquad \textnormal{if} \qquad  \vert p-j \vert \geqslant 5.$$\\
The following Bernstein inequality will be of constant use in the paper see \cite{che98}. 
\begin{lem}\label{Bernstein} There exists a constant $C>0$ such that for every $j\in\NN$, $k\in\NN$ and for  every function $v$ we have
\begin{eqnarray*}
\sup_{\vert\alpha\vert=k}\Vert\partial^{\alpha}S_{j}v\Vert_{L^{p_{2}}}&\le& C^{k}2^{j\big(k+3\big(\frac{1}{p_{1}}-\frac{1}{p_{2}}\big)\big)}\Vert S_{j}v \Vert_{L^{p_{1}}},\;\;\textnormal{for}\;\; p_{2}\ge p_{1}\ge 1\\
C^{-k}2^{jk}\Vert\Delta_{j}v\Vert_{L^{p_{1}}}&\le&\sup_{\vert\alpha\vert=k}\Vert\partial^{\alpha}\Delta_{j}v \Vert_{L^{p_{1}}}\le C^{k}2^{jk}\Vert\Delta_{j}v\Vert_{L^{p_{1}}}.
\end{eqnarray*}
\end{lem}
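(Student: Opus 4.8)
The plan is to derive both inequalities from the same mechanism: each Littlewood--Paley piece is Fourier-supported in a dilate of a fixed ball or annulus, so applying $\partial^\alpha$ amounts to convolution against a rescaled fixed kernel, after which Young's convolution inequality and a scaling computation produce the stated powers of $2^j$. The constants $C^k$ will come from a uniform bound on the $L^1$ and $L^\infty$ norms of the fixed kernels. The two upper bounds (the first displayed inequality, and the right half of the second) are the ball case, while the lower bound in the second inequality is an inversion argument on the annulus.

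First I would treat the upper bounds. Fix a radial $\tilde\chi\in\mathscr{D}(\RR^3)$ with $\tilde\chi\equiv1$ on a ball containing the support of $\chi$ and of the rescaled $\varphi$; since $\widehat{S_j v}$ is supported where $\tilde\chi(2^{-j}\cdot)=1$, I can write on the Fourier side $(i\xi)^\alpha\widehat{S_j v}=(i\xi)^\alpha\tilde\chi(2^{-j}\xi)\widehat{S_j v}$. Setting $\psi_\alpha=\mathcal{F}^{-1}\big[(i\xi)^\alpha\tilde\chi(\xi)\big]$ and unwinding the dilation gives the representation $\partial^\alpha S_j v=2^{j(k+3)}\,\psi_\alpha(2^j\cdot)*S_j v$ with $k=|\alpha|$. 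Young's inequality with $\frac1q=1-(\frac1{p_1}-\frac1{p_2})$ (which requires exactly $p_2\ge p_1$), together with $\|\psi_\alpha(2^j\cdot)\|_{L^q}=2^{-3j/q}\|\psi_\alpha\|_{L^q}$, then yields $\|\partial^\alpha S_j v\|_{L^{p_2}}\le 2^{j(k+3(1/p_1-1/p_2))}\|\psi_\alpha\|_{L^q}\|S_j v\|_{L^{p_1}}$, which is the first inequality once $\|\psi_\alpha\|_{L^q}\le C^k$ uniformly in $q$ is checked. The right half of the second inequality is the special case $p_2=p_1$ with $S_j$ replaced by $\Delta_j$, the annulus being contained in a ball so that the same kernel applies.

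For the lower bound I would exploit that $\widehat{\Delta_j v}$ is supported in an annulus $\{c2^j\le|\xi|\le C2^j\}$ avoiding the origin, where $|\xi|^{2k}$ is bounded below by $(c2^j)^{2k}$. Using the multinomial identity $|\xi|^{2k}=\sum_{|\alpha|=k}\binom{k}{\alpha}\xi^{2\alpha}$ and inserting a cutoff $\tilde\varphi\equiv1$ on the support of $\varphi$, I recover $\Delta_j v$ from its $k$-th derivatives via $\widehat{\Delta_j v}=\sum_{|\alpha|=k}\binom{k}{\alpha}\frac{\xi^\alpha}{|\xi|^{2k}}\tilde\varphi(2^{-j}\xi)\,\xi^\alpha\widehat{\Delta_j v}$, where $\xi^\alpha\widehat{\Delta_j v}$ is, up to a unimodular constant, the transform of $\partial^\alpha\Delta_j v$. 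This expresses $\Delta_j v$ as a sum over $|\alpha|=k$ of convolutions of $\partial^\alpha\Delta_j v$ against rescaled kernels $\mathcal{F}^{-1}[M_\alpha(2^{-j}\cdot)]$ with $M_\alpha(\eta)=\eta^\alpha|\eta|^{-2k}\tilde\varphi(\eta)$; a scaling computation gives the prefactor $2^{-jk}$, and Young ($L^1*L^{p_1}\to L^{p_1}$) produces $\|\Delta_j v\|_{L^{p_1}}\le C^k2^{-jk}\sup_{|\alpha|=k}\|\partial^\alpha\Delta_j v\|_{L^{p_1}}$, which is the claimed lower bound.

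The technical heart, and the only place requiring care, is the uniform control $\|\psi_\alpha\|_{L^1},\|\psi_\alpha\|_{L^\infty}\le C^k$ (and likewise $\sum_{|\alpha|=k}\binom{k}{\alpha}\|\mathcal{F}^{-1}M_\alpha\|_{L^1}\le C^k$), ensuring the constants never depend on $j$ or on the Lebesgue exponents and that the $k$-dependence stays geometric. For $\psi_\alpha$ I would bound $\|\psi_\alpha\|_{L^\infty}\le\|(i\xi)^\alpha\tilde\chi\|_{L^1}\le 2^k\|\tilde\chi\|_{L^1}$ using $|\xi|\le2$ on $\mathrm{supp}\,\tilde\chi$, and for the $L^1$ bound use the weight $(1+|x|^2)^2$: since $(1+|x|^2)^2\psi_\alpha=\mathcal{F}^{-1}\big[(I-\lap_\xi)^2((i\xi)^\alpha\tilde\chi)\big]$ and $(I-\lap_\xi)^2$ lands at most four derivatives on the monomial $\xi^\alpha$, the resulting sup-norm is $\le C^k$, whence $\|\psi_\alpha\|_{L^1}\le C^k\int_{\RR^3}(1+|x|^2)^{-2}\,dx=C^k$; interpolation then gives the uniform $L^q$ bound. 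The same weighted argument handles $M_\alpha$, whose smoothness on the annulus is unaffected by the factor $|\eta|^{-2k}$, and the multinomial coefficients contribute $\sum_{|\alpha|=k}\binom{k}{\alpha}=3^k$, all absorbed into $C^k$.
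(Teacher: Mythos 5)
Your proof is correct and is exactly the standard argument: the paper gives no proof of this lemma, citing Chemin's book \cite{che98}, and the proof there proceeds precisely as you do --- writing $\partial^{\alpha}S_{j}v$ (resp.\ $\partial^{\alpha}\Delta_{j}v$) as a convolution with a rescaled kernel obtained from a cutoff equal to $1$ on the ball (resp.\ annulus), applying Young's inequality with the scaling $2^{j(k+3(1/p_{1}-1/p_{2}))}$, and recovering $\Delta_{j}v$ from its $k$-th derivatives via the multinomial identity $|\xi|^{2k}=\sum_{|\alpha|=k}\binom{k}{\alpha}\xi^{2\alpha}$ on the annulus for the reverse bound. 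Your weighted $(1+|x|^{2})^{2}$ argument correctly secures the geometric constants $C^{k}$ uniformly in $j$ and in the Lebesgue exponents, so there is no gap.
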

From the paradifferential calculus introduce by J.-M. Bony \cite{bo81} the product $uv$ can be formally divided into three parts as follows :
\begin{equation}\label{sq}
fg=T_{f}g+T_{g}f+R(f,g),
\end{equation}
where 
\begin{equation*}
T_{f}g\overset{def}{=}\sum_{j}S_{j-1}f\Delta_{j}g  
\end{equation*}
and 
\begin{equation*}
R(f,g)=\sum_{j}\Delta_{j}f \widetilde{\Delta}_{j}g\;,\;\textnormal{with}\;\;\widetilde{\Delta}_{j}=\Delta_{j-1}+\Delta_{j}+\Delta_{j+1}.
\end{equation*}
\subsection{Usual and heterogeneous Besov spaces}
We recall now the following definition of general Besov spaces.
\begin{defi}\label{def1}
Let $s\in\RR$ and $1 \le p,r \le +\infty.$ The inhomogeneous Besov space $B_{p,r}^s$ is the set of tempered distributions $f$ such that
$$\Vert f\Vert_{B_{p,r}^s}:=\Big(2^{js}\Vert\Delta_{j}f\Vert_{L^p}\Big)_{\ell^r}<+\infty.$$
\end{defi}
The following embeddings are an easy consequence of Bernstein inequalities,
\begin{equation*}
B_{p_1,r_1}^{s}\hookrightarrow B_{p_2,r_2}^{s+3(\frac{1}{p_2}-\frac{1}{p_1})}\,,\;\,p_1\le p_2\;\;\textnormal{and}\;\; r_1 \le r_2.
\end{equation*}
Let $T>0,$ $\rho\ge 1,$ $(p,r)\in[1,\infty]^{2}$ and $s\in\RR,$ we denote by $L_{T}^{\rho}B_{p,r}^{s}$ the space of distribution $f$ such that
$$\Vert f\Vert_{L_{T}^{\rho}B_{p,r}^{s}}:=\Big\Vert \Big(2^{js}\Vert\Delta_{j}f\Vert_{L^p}\Big)_{\ell^r}\Big\Vert_{L_{T}^{\rho}}<+\infty.$$
We say that $f$ belongs to the Chemin-Lerner space $\widetilde{L}_{T}^{\rho}B_{p,r}^{s}$ if 
$$\Vert f\Vert_{\widetilde{L}_{T}^{\rho}B_{p,r}^{s}}:=\Big\Vert 2^{js}\Vert\Delta_{j}f\Vert_{L_{T}^{\rho}L^p}\Big\Vert_{\ell^r}<+\infty.$$
The relation between these spaces are detailed in the following lemma, which is a direct consequence of the  Minkowski inequality. 
\begin{lem}\label{le1}
 Let $ s\in\RR ,\varepsilon>0$ and $(p,r,\rho) \in[1,+\infty]^3.$ Then we have the following embeddings
$$L^{\rho}_{T}B^{s}_{p,r}\hookrightarrow\widetilde L^{\rho}_{T}B^{s}_{p,r}\hookrightarrow L^{\rho}_{T}B^{s-\varepsilon}_{p,r}\;\;\;\textnormal{if}\quad r\geqslant\rho.$$ 
$${L^\rho_{T}}{B_{p,r}^{s+\varepsilon}}\hookrightarrow\widetilde L^\rho_{T}{B_{p,r}^s}\hookrightarrow L^\rho_{T}B_{p,r}^s\;\;\;\textnormal{if}\quad 
\rho\geq r.$$
\end{lem}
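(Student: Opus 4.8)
The plan is to reduce everything to the interplay between the order in which one takes the $\ell^r$-sum over the Littlewood--Paley index $j\ge-1$ and the $L^\rho$-norm in time. Writing $b_j(t):=2^{js}\Vert\Delta_j f(t)\Vert_{L^p}$, the two norms in play are
\[
\Vert f\Vert_{L^\rho_T B^s_{p,r}}=\Big\Vert\,\Vert b_j\Vert_{\ell^r_j}\,\Big\Vert_{L^\rho_T},\qquad \Vert f\Vert_{\widetilde L^\rho_T B^s_{p,r}}=\Big\Vert\,\Vert b_j\Vert_{L^\rho_T}\,\Big\Vert_{\ell^r_j},
\]
so the \emph{only} difference is the order of the two norms. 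I would prove the four inclusions in two groups: the two ``equal regularity'' embeddings, which follow at once from Minkowski's inequality, and the two embeddings involving the loss $\varepsilon$, which follow from a weighted H\"older argument exploiting the summability of the dyadic weights $2^{-j\varepsilon}$ over $j\ge-1$.

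For the equal-regularity embeddings I would invoke the mixed-norm Minkowski inequality in the form: for $1\le a\le b\le\infty$ one has $\big\Vert\,\Vert g\Vert_{L^a}\,\big\Vert_{L^b}\le\big\Vert\,\Vert g\Vert_{L^b}\,\big\Vert_{L^a}$, i.e. placing the smaller index on the inside only decreases the norm. Applying this with the ``inner'' variable being time and the ``outer'' variable being $j$: when $r\ge\rho$ one takes $a=\rho$, $b=r$ and obtains $\Vert f\Vert_{\widetilde L^\rho_T B^s_{p,r}}\le\Vert f\Vert_{L^\rho_T B^s_{p,r}}$, which is the first inclusion of the first line; when $\rho\ge r$ one takes $a=r$, $b=\rho$ and obtains the reverse inequality $\Vert f\Vert_{L^\rho_T B^s_{p,r}}\le\Vert f\Vert_{\widetilde L^\rho_T B^s_{p,r}}$, which is the second inclusion of the second line. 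Getting the direction of Minkowski right (smaller exponent inside) is the one point that needs care here.

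For the lossy embedding $\widetilde L^\rho_T B^s_{p,r}\hookrightarrow L^\rho_T B^{s-\varepsilon}_{p,r}$ under $r\ge\rho$, I would first use the pointwise-in-$t$ inclusion $\ell^r\hookrightarrow\ell^\rho$ (valid since $\rho\le r$) to bound the $L^\rho_T B^{s-\varepsilon}$ norm by $\big\Vert(\sum_j(2^{-j\varepsilon}b_j(t))^\rho)^{1/\rho}\big\Vert_{L^\rho_T}$; since the inner and outer exponents now coincide, Tonelli lets me swap the sum and the time integral, turning this into $(\sum_j 2^{-j\varepsilon\rho}\Vert b_j\Vert_{L^\rho_T}^\rho)^{1/\rho}$. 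A H\"older inequality in $j$ with exponents $r/\rho$ and its conjugate then pulls out a convergent factor $\sum_{j\ge-1}2^{-cj\varepsilon}<\infty$ (for a suitable $c>0$) and leaves exactly $(\sum_j\Vert b_j\Vert_{L^\rho_T}^r)^{1/r}=\Vert f\Vert_{\widetilde L^\rho_T B^s_{p,r}}$. For the remaining lossy embedding $L^\rho_T B^{s+\varepsilon}_{p,r}\hookrightarrow\widetilde L^\rho_T B^s_{p,r}$ the argument is even simpler: from the pointwise bound $2^{j\varepsilon}b_j(t)\le(\sum_k(2^{k\varepsilon}b_k(t))^r)^{1/r}$ I take the $L^\rho_T$ norm to get $\Vert b_j\Vert_{L^\rho_T}\le 2^{-j\varepsilon}\Vert f\Vert_{L^\rho_T B^{s+\varepsilon}_{p,r}}$ for every $j$, and then summing the geometric series $\sum_{j\ge-1}2^{-j\varepsilon r}$ in $\ell^r$ yields the claim.

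The proof has no genuine obstacle; the only subtleties are bookkeeping ones. I must keep the Minkowski direction straight, and I must make sure the dyadic weights are summable --- which is guaranteed because the nonhomogeneous decomposition runs over $j\ge-1$, so $\sum_{j\ge-1} 2^{-j\varepsilon\sigma}<\infty$ for every $\sigma>0$. The endpoint cases $\rho=\infty$ or $r=\infty$ are handled by replacing the relevant integral or sum by a supremum; Minkowski's inequality and the pointwise domination steps survive verbatim, so no separate treatment is really needed.
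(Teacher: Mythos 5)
Your proof is correct and follows essentially the same route as the paper, which derives the lemma directly from the Minkowski inequality (the lossy embeddings being handled, as you do, by H\"older in $j$ and the summability of $\sum_{j\ge-1}2^{-j\varepsilon\sigma}$). One cosmetic slip: the set inclusion you invoke should read $\ell^\rho\hookrightarrow\ell^r$ for $\rho\le r$ (equivalently $\Vert\cdot\Vert_{\ell^r}\le\Vert\cdot\Vert_{\ell^\rho}$), which is the norm inequality you actually use, so the argument itself is unaffected.
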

We remark that the Sobolev space $H^s$ coincides with the Besov spaces $B^{s}_{2,2}$ for $s\in\RR$ and we have the following embedding 
$$\forall\,0 \le s <\frac{d}{2}\;\;,\,\,H^{s}\hookrightarrow L^{p}\,\,\textnormal{with}\,\,\;\;\, p=\frac{2d}{d-2s}.$$
Now, we will introduce the heterogeneous Besov spaces which are an extension of the classical Besov spaces.
\begin{defi}\label{de1} Let $\Psi :\{-1\}\cup\NN\to\RR_+^*$ be a given function.\\
$(i)$We say that $\Psi$ belongs to the class $\mathcal{U}$ if the following conditions are satisfied:\\
$(a)$ $\Psi$ is a nondecreasing function.\\
$(b)$ There exists $C>0$ such that,
$$\sup_{x\in\NN\cup\{-1\}}\frac{\Psi(x+1)}{\Psi(x)}\le C.$$
$(ii)$ We define the class $\mathcal{U}_{\infty}$ by the set of function $\Psi\in\mathcal{U}$ satisfying $\displaystyle\lim_{x\to+\infty}\Psi(x)=+\infty.$\\
$(iii)$ Let $s\in\RR,$ $(p,r)\in[1,\infty]^2$ and $\Psi\in\mathcal{U}.$ We define the heterogeneous Besov spaces $B_{p,r}^{s,\Psi}$ as follows:
$$u\in B_{p,r}^{s,\Psi}\;\;\textnormal{iff}\;\;\;\Vert u\Vert_{B_{p,r}^{s,\Psi}}:=\Big(\Psi(q)2^{qs}\Vert\Delta_{q}u\Vert_{L^{p}}\Big)_{\ell^{r}}<+\infty.$$
\end{defi}
We observe that when the profile $\Psi$ has an exponential growth: $\Psi(q)=2^{\alpha q}, \alpha\in\RR_+,$ then the heterogeneous Besov space $B_{p,r}^{s,\Psi}$ reduces to the classical Besov space $B_{p,r}^{s+\alpha}.$ When the profile $\Psi$ is a nonnegative constant, it is clear that $B_{p,r}^{s,\Psi}=B_{p,r}^{s}.$\\
Now, we will give the following result which describes that any element of a given Besov space is always more regular than the prescribed regularity, (see \cite{t012} for a proof).
\begin{lem}\label{Besov}  
Let $s\in\RR,$ $p\in[1,+\infty],$ $r\in[1,+\infty[$ and $f\in B_{p,r}^{s}.$ Then there exists a function $\Psi\in\mathcal{U}_{\infty}$ such that $f\in B_{p,r}^{s,\Psi}.$
\end{lem}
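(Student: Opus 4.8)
The plan is to forget the functional-analytic dressing and read the claim as a pure fact about $\ell^r$ sequences. Set $c_q:=2^{qs}\|\Delta_q f\|_{L^p}$ for $q\ge-1$. By definition of $B_{p,r}^{s}$ with $r<+\infty$, the hypothesis $f\in B_{p,r}^{s}$ is exactly $(c_q)_q\in\ell^r$, i.e. $\sum_{q\ge-1}c_q^r=\|f\|_{B_{p,r}^{s}}^r<+\infty$. Likewise $f\in B_{p,r}^{s,\Psi}$ means $\sum_{q\ge-1}\Psi(q)^r c_q^r<+\infty$. So the whole statement amounts to producing one nonnegative sequence $\Psi:\{-1\}\cup\NN\to\RR_+^*$ which is nondecreasing, tends to $+\infty$, has bounded consecutive ratios $\sup_q\Psi(q+1)/\Psi(q)<+\infty$ (conditions $(a)$, $(b)$ and the $\mathcal{U}_\infty$ growth of Definition \ref{de1}), and still satisfies $\sum_q\Psi(q)^rc_q^r<+\infty$.

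\textbf{A staircase weight tied to the dyadic tail.} I would build $\Psi$ from the tails $T_q:=\sum_{k\ge q}c_k^r$, which form a nonincreasing sequence with $T_q\to0$. Choosing strictly increasing integers $q_0<q_1<q_2<\cdots$ in $\{-1\}\cup\NN$ with $T_{q_j}\le 4^{-j}$ (possible since $T_q\to0$), I define the staircase
\[
\Psi(q)=2^{\,j/(2r)}\quad\text{for } q_j\le q<q_{j+1}\ (j\ge0),\qquad \Psi(q)=1\quad\text{for }-1\le q<q_0 .
\]
This $\Psi$ is manifestly nondecreasing and tends to $+\infty$, and the only increments occur at the block edges $q=q_{j+1}$, each by the fixed factor $2^{1/(2r)}$; hence $\sup_q\Psi(q+1)/\Psi(q)=2^{1/(2r)}<+\infty$, so $\Psi\in\mathcal{U}_\infty$.

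\textbf{Summability.} It then remains to bound $\sum_q\Psi(q)^rc_q^r$. Splitting into the low-frequency part and the blocks, and using $\sum_{q_j\le q<q_{j+1}}c_q^r\le T_{q_j}\le 4^{-j}$,
\[
\sum_{q\ge-1}\Psi(q)^rc_q^r\le\sum_{-1\le q<q_0}c_q^r+\sum_{j\ge0}2^{j/2}T_{q_j}\le\|f\|_{B_{p,r}^{s}}^r+\sum_{j\ge0}2^{-3j/2}<+\infty,
\]
so $f\in B_{p,r}^{s,\Psi}$, which is the desired conclusion.

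\textbf{Where the difficulty sits.} The genuinely delicate point is condition $(b)$: reconciling $\Psi\to+\infty$ with bounded step ratios. The naive weight $\Psi(q)=T_q^{-1/(2r)}$ already gives both $\Psi\to+\infty$ and summability (by the telescoping estimate $T_q^{-1/2}(T_q-T_{q+1})\le 2(T_q^{1/2}-T_{q+1}^{1/2})$, whose sum is $\lesssim T_{q_0}^{1/2}$), but its ratio $(T_q/T_{q+1})^{1/(2r)}$ blows up precisely when the tail drops sharply, in particular whenever $f$ is band-limited. The staircase repairs exactly this defect: by letting $\Psi$ climb only through the fixed bounded factor $2^{1/(2r)}$ at the dyadic thresholds $T_{q_j}\le4^{-j}$, it \emph{decouples} the requirement ``$\Psi\to+\infty$'' from the actual decay rate of the tail. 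The case of a finitely supported $(c_q)$ is then covered with no extra work, since the thresholds are met trivially and the surviving sum has only finitely many nonzero terms. (The restriction $r<+\infty$ is essential and used throughout; for $r=+\infty$ the statement is false, e.g. when $c_q$ is constant.)
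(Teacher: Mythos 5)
Your proof is correct: the reduction to a purely sequential statement about $\ell^r$, the staircase weight pinned to tail thresholds $T_{q_j}\le 4^{-j}$, and the verification of monotonicity, the bounded-ratio condition $(b)$, divergence of $\Psi$, and summability (block contribution $\le 2^{j/2}4^{-j}=2^{-3j/2}$) are all sound, including the band-limited edge case and the correct observation that $r<+\infty$ is essential. The paper itself does not prove this lemma but defers to the reference \cite{t012}, and your block/staircase construction is essentially the standard argument given there, so your route matches the intended proof.
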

The following Proposition will be useful later, see \cite{hkr} for a proof.
\begin{prop}\label{pro11} We have the following estimates :\\
$a)$ Let $p\in [1,\infty],$ $f,g$ and $h$ be three functions such that $x\,h \in L^{1},$ $\nabla f \in L^{p}$ and $g \in L^{\infty}.$  Then
$$\Vert h \ast (f\,g)-f(h \ast g) \Vert_{L^{p}}\le \Vert x\,h \Vert_{L^{1}}\Vert\nabla f \Vert_{L^{p}}\Vert g \Vert_{L^\infty}.$$
$b)$ Assume that $xh \in L^1$, $\nabla f \in L^\infty$ and $g\in L^{p},$ $\forall p\in [1,\infty].$ Then we have also
$$\Vert h \ast (f\,g)-f(h \ast g) \Vert_{L^{p}}\le \Vert x\,h \Vert_{L^1}\Vert\nabla f \Vert_{L^\infty}\Vert g \Vert_{L^{p}}.$$
\end{prop}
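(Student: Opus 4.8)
The plan is to reduce both inequalities to a single pointwise representation of the commutator and then apply Minkowski's integral inequality. The key observation is that subtracting $f\,(h\ast g)$ is exactly what is needed to replace $f(y)$ by the \emph{difference} $f(y)-f(x)$ inside the convolution integral, which produces one gained factor of $|x-y|$ that will be absorbed by the weight $x\,h$.

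First I would write out the convolutions explicitly: for every $x$,
$$\big(h\ast(fg)-f\,(h\ast g)\big)(x)=\int_{\RR^3}h(x-y)\big(f(y)-f(x)\big)g(y)\,dy.$$
Next, assuming first that $f$ is smooth (the general case follows by a standard density argument, using only that $\nabla f\in L^p$, resp. $L^\infty$), I would use the fundamental theorem of calculus along the segment joining $x$ to $y$,
$$f(y)-f(x)=\int_0^1(y-x)\cdot\nabla f\big(x+\tau(y-x)\big)\,d\tau,$$
and then perform the change of variables $z=x-y$ to obtain
$$\big(h\ast(fg)-f\,(h\ast g)\big)(x)=-\int_{\RR^3}h(z)\,z\cdot\Big(\int_0^1\nabla f(x-\tau z)\,d\tau\Big)\,g(x-z)\,dz.$$

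I would then bound the integrand pointwise by $|h(z)|\,|z|\int_0^1|\nabla f(x-\tau z)|\,|g(x-z)|\,d\tau$ and apply Minkowski's integral inequality in the variable $x$, which yields
$$\big\Vert h\ast(fg)-f\,(h\ast g)\big\Vert_{L^p}\le\int_{\RR^3}|z|\,|h(z)|\int_0^1\big\Vert\nabla f(\cdot-\tau z)\,g(\cdot-z)\big\Vert_{L^p}\,d\tau\,dz.$$
For part $a)$ I apply H\"older with exponents $(p,\infty)$ together with the translation invariance of Lebesgue norms to get $\Vert\nabla f(\cdot-\tau z)\,g(\cdot-z)\Vert_{L^p}\le\Vert\nabla f\Vert_{L^p}\Vert g\Vert_{L^\infty}$; the remaining $\tau$-integral equals one and the $z$-integral equals $\Vert x\,h\Vert_{L^1}$, giving the claim. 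For part $b)$ the only change is to use H\"older with the exponents $(\infty,p)$ instead, bounding the same quantity by $\Vert\nabla f\Vert_{L^\infty}\Vert g\Vert_{L^p}$, after which the conclusion follows identically.

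There is no genuine analytic obstacle here: the estimate is elementary once the commutator is rewritten through the fundamental theorem of calculus, and essentially all the work is bookkeeping. The only points requiring a little care are the density argument that justifies the integral representation of $f(y)-f(x)$ (so that $\nabla f$ need merely lie in $L^p$, resp. $L^\infty$, rather than be continuous), and the verification that the majorant appearing in Minkowski's inequality is finite, which is guaranteed by the hypotheses $x\,h\in L^1$ and the stated integrability of $\nabla f$ and $g$.
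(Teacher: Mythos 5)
Your proof is correct, and it is the standard argument for this estimate: writing the commutator as $\int h(x-y)\big(f(y)-f(x)\big)g(y)\,dy$, gaining the factor $|x-y|$ via the fundamental theorem of calculus, and concluding with Minkowski's integral inequality and H\"older. The paper itself does not prove this proposition but cites \cite{hkr} for it, and the proof given there proceeds in essentially this same way, so there is nothing to reconcile beyond the two small points you already flagged (the density/Lipschitz justification of the integral representation of $f(y)-f(x)$, and finiteness of the majorant).
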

\subsection{Lorentz spaces and interpolation}
We can define the Lorentz spaces by interpolation from Lebesgue spaces :
$$(L^{p_1},L^{p_2})_{(\mu,r)}=L^{p,r},$$
where $1 \le p_{1}< p < p_{2} \le \infty$, $\mu$ satisfies $\frac{1}{p}=\frac{1-\mu}{p_1}+\frac{\mu}{p_2}$ and $1\le r\le\infty.$\\
We have the classical properties:
\begin{equation}\label{claspro}
\Vert uv\Vert_{L^{p,r}}\le C\Vert u\Vert_{L^\infty}\Vert v\Vert_{L^{p,r}}
\end{equation}
\begin{equation*}
L^{p,r}\hookrightarrow L^{p,r_1}\;\;,\;\forall 1 \le p \le \infty,\;1 \le r \le r_1 \le \infty\qquad\textnormal{and}\;\;\;L^{p,p}=L^p.
\end{equation*}
We have also $L^{3,1}=(L^2,L^{\bar{p}})_{(\mu,1)}$ with $3<\bar{p}$ and we deduce that,
\begin{equation}\label{ah}
\Vert u\Vert_{L^{3,1}}\le C\Vert u\Vert_{L^{2}\cap L^{\bar{p}}}\;\;\textnormal{with}\;\,3<\bar{p}.
\end{equation}
The following lemma will be used later see for instance \cite{gl02, ro63}.
\begin{lem}\label{lem h}
There exists a constant $C>0$ such that for every $0<\beta<3$ 
$$\Vert f\ast g \Vert_{L^{\infty}(\RR^3)}\le C \Vert f \Vert_{L^{\frac{3}{\beta},\infty}(\RR^3)}\Vert g \Vert_{L^{\frac{3}{3-\beta},1}(\RR^3)}.$$
\end{lem}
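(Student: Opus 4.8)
The plan is to reduce the stated bound to a Hölder-type (O'Neil) inequality in Lorentz spaces, which I will in turn establish through the decreasing rearrangement. First, since the convolution is translation invariant, for every $x\in\RR^3$ one has
$$|(f\ast g)(x)|=\Big|\int_{\RR^3}f(x-y)\,g(y)\,dy\Big|\le\int_{\RR^3}|f(x-y)|\,|g(y)|\,dy,$$
so that $\Vert f\ast g\Vert_{L^\infty}\le\sup_{x}\int_{\RR^3}|f(x-y)|\,|g(y)|\,dy$. It therefore suffices to prove the pointwise-in-$x$ product estimate $\int_{\RR^3}|u|\,|v|\,dy\le C\Vert u\Vert_{L^{3/\beta,\infty}}\Vert v\Vert_{L^{3/(3-\beta),1}}$, applied to $u=f(x-\cdot)$ and $v=g$. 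The desired conclusion then follows upon taking the supremum over $x$, because both the convolution and the Lorentz norm are invariant under translations, whence $\Vert f(x-\cdot)\Vert_{L^{3/\beta,\infty}}=\Vert f\Vert_{L^{3/\beta,\infty}}$.

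To prove the product bound I use the decreasing rearrangement. Denoting by $u^{*}$ the nonincreasing rearrangement of $|u|$ on $(0,\infty)$, the Hardy--Littlewood inequality gives
$$\int_{\RR^3}|u|\,|v|\,dy\le\int_0^\infty u^{*}(t)\,v^{*}(t)\,dt.$$
Set $p=3/\beta$ and $p'=3/(3-\beta)$; the hypothesis $0<\beta<3$ ensures $p,p'\in(1,\infty)$ and $\tfrac1p+\tfrac1{p'}=\tfrac{\beta}{3}+\tfrac{3-\beta}{3}=1$. I then split
$$u^{*}(t)\,v^{*}(t)=\big(t^{1/p}u^{*}(t)\big)\,\big(t^{1/p'}v^{*}(t)\big)\,t^{-1},$$
using that $t^{-1/p-1/p'}=t^{-1}$. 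Bounding the first factor by $t^{1/p}u^{*}(t)\le\sup_{s>0}s^{1/p}u^{*}(s)=\Vert u\Vert_{L^{p,\infty}}$ and pulling it outside the integral yields
$$\int_0^\infty u^{*}(t)\,v^{*}(t)\,dt\le\Vert u\Vert_{L^{p,\infty}}\int_0^\infty t^{1/p'}v^{*}(t)\,\frac{dt}{t}=\Vert u\Vert_{L^{p,\infty}}\,\Vert v\Vert_{L^{p',1}},$$
since $\int_0^\infty t^{1/p'}v^{*}(t)\,\frac{dt}{t}$ is exactly the Lorentz norm of $v$ with second index $1$. Recalling $p=3/\beta$ and $p'=3/(3-\beta)$ and returning to the supremum over $x$ closes the argument.

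The only genuine inputs are the Hardy--Littlewood rearrangement inequality $\int|u|\,|v|\le\int_0^\infty u^{*}v^{*}$ and the rearrangement representation $\Vert v\Vert_{L^{p',1}}=\int_0^\infty t^{1/p'-1}v^{*}(t)\,dt$ of the Lorentz (quasi-)norm; both are classical and are precisely the content of the references cited in the statement. I expect the main point requiring care to be the identification of the interpolation definition of $L^{p,r}$ used elsewhere in the excerpt with the rearrangement functional appearing above, and the verification that this identification, together with the endpoint estimate, can be carried out with a constant $C$ that does not degenerate as $\beta$ ranges over $(0,3)$; this is where the absolute constant $C$ in the statement enters. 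Once that equivalence is in hand, the rearrangement computation itself holds with constant $1$, so no further difficulty arises.
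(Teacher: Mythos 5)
Your proof is correct. The paper does not actually prove this lemma at all --- it is quoted from the literature (\cite{gl02, ro63}) --- and your argument is precisely the classical O'Neil argument behind those citations: translation invariance of the rearrangement reduces the convolution bound to the Hardy--Littlewood inequality $\int |u|\,|v| \le \int_0^\infty u^{*}(t)\,v^{*}(t)\,dt$, and the splitting $u^{*}v^{*} = \bigl(t^{1/p}u^{*}\bigr)\bigl(t^{1/p'-1}v^{*}\bigr)$ with $\frac{1}{p}+\frac{1}{p'}=1$, $p=\frac{3}{\beta}$, then gives the bound with constant $1$ in the rearrangement (quasi-)norms. The one caveat you flag is indeed the only point of care: since the paper defines $L^{p,r}$ by real interpolation from Lebesgue spaces, one must invoke the standard equivalence of the interpolation norm with the rearrangement functional, and it is solely at this identification --- not in the rearrangement computation, which is uniform in $\beta$ --- that the constant $C$ and any possible dependence on $\beta$ enter; for the paper's purposes this is harmless, as the lemma is only ever applied with the fixed exponent $\beta=2$ (in the derivation of \eqref{12}).
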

By using this result and the fact that $\frac{1}{\vert x \vert^2}\in L^{\frac{3}{2},\infty}(\RR^3),$ we get
\begin{eqnarray}\label{12}
\nonumber \Vert\nabla\Delta^{-1}f \Vert_{L^{\infty}(\RR^3)}&\lesssim& \Vert\frac{1}{\vert x \vert^2}\Vert_{L^{\frac{3}{2},\infty}(\RR^3)}\Vert f \Vert_{L^{3,1}(\RR^3)}\\
&\lesssim& \Vert f \Vert_{L^{3,1}(\RR^3)}.
\end{eqnarray}

\subsection{Estimates for a transport-diffusion equation}
We will give now some useful estimates for any smooth solution of the linear transport-diffusion model given by
\begin{equation}\label{kp} 
\left\{ \begin{array}{ll} 
\partial_{t}f+v\cdot\nabla f-\kappa\Delta f=g\\
f_{| t=0}=f^{0} 
\end{array} \right.
\end{equation} 
We will give two kinds of estimates : the first is the $L^p$ estimates and the second is the smoothing effects. Let us start with the $L^p$ estimates see \cite{h05}.
\begin{lem}\label{kj1} Let $v$ be a smooth divergence-free vector field of $\RR^3$ and $f$ be a smooth solution of \eqref{kp}. Then we have $\forall p \in [1,\infty]$ and for every $\kappa \ge 0,$  
$$\Vert f(t)\Vert_{L^p}\le \Vert f^{0}\Vert_{L^p}+\int_{0}^{t}\Vert g(\tau)\Vert_{L^p}d\tau.$$ 
\end{lem} 
We need to the following result, see \cite{hk1} for a proof.
\begin{prop}\label{smooth} Let $v$ be a smooth divergence-free vector field of $\RR^3$ with vorticity $\omega:=curl v.$ Let $f$ be a smooth solution of \eqref{kp} with $\kappa=1$ and $g=0$. Then we have for every $j\in\NN,$ $f^{0}\in L^p$ with $1\le p\le\infty$ and $t\ge 0,$  
$$2^{2j}\Vert\Delta_{j} f\Vert_{L_{t}^{1}L^{p}}\lesssim\Vert f^0 \Vert_{L^{p}}\Big(1+(j+1)\Vert\omega\Vert_{L_{t}^{1}L^{\infty}}+\Vert\nabla\Delta_{-1}v \Vert_{L_{t}^{1}L^\infty}\Big).$$
\end{prop}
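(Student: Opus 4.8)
The plan is to localize equation \eqref{kp} (with $\kappa=1$ and $g=0$) in frequency, to derive a scalar differential inequality for each dyadic block through an $L^p$ energy estimate, and then to feed a sharp commutator bound into the right-hand side. First I would apply $\Delta_j$ to the equation and set $f_j:=\Delta_j f$, which solves
$$\partial_t f_j+v\cdot\nabla f_j-\Delta f_j=-[\Delta_j,v\cdot\nabla]f.$$
Taking the $L^p$ inner product with $|f_j|^{p-2}f_j$ (for $1<p<\infty$, the endpoints $p=1,\infty$ following by a standard limiting/duality argument), the transport term drops out because $\textnormal{div}\,v=0$, since $\int_{\RR^3}(v\cdot\nabla f_j)|f_j|^{p-2}f_j\,dx=\frac1p\int_{\RR^3}v\cdot\nabla|f_j|^p\,dx=0$.

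For the dissipation I would invoke the generalized Bernstein inequality (coercivity of $-\Delta$ on functions spectrally supported in an annulus of size $2^j$): there is $c>0$ such that $\int_{\RR^3}(-\Delta f_j)|f_j|^{p-2}f_j\,dx\ge c\,2^{2j}\Vert f_j\Vert_{L^p}^p$ for all $j\in\NN$. Combined with H\"older on the commutator term this yields
$$\frac1p\frac{d}{dt}\Vert f_j\Vert_{L^p}^p+c\,2^{2j}\Vert f_j\Vert_{L^p}^p\le\Vert[\Delta_j,v\cdot\nabla]f\Vert_{L^p}\Vert f_j\Vert_{L^p}^{p-1},$$
and after dividing out $\Vert f_j\Vert_{L^p}^{p-1}$ and applying Duhamel's formula,
$$\Vert f_j(t)\Vert_{L^p}\le e^{-c2^{2j}t}\Vert\Delta_j f^0\Vert_{L^p}+\int_0^t e^{-c2^{2j}(t-\tau)}\Vert[\Delta_j,v\cdot\nabla]f(\tau)\Vert_{L^p}\,d\tau.$$
Integrating in time, multiplying by $2^{2j}$, using $2^{2j}\int_0^t e^{-c2^{2j}\tau}d\tau\le 1/c$ for the free part and Young's inequality in time for the forced part, I obtain
$$2^{2j}\Vert f_j\Vert_{L_t^1 L^p}\lesssim\Vert f^0\Vert_{L^p}+\Vert[\Delta_j,v\cdot\nabla]f\Vert_{L_t^1 L^p}.$$

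The remaining and decisive step is the commutator estimate
$$\Vert[\Delta_j,v\cdot\nabla]f(\tau)\Vert_{L^p}\lesssim\big((j+1)\Vert\omega(\tau)\Vert_{L^\infty}+\Vert\nabla\Delta_{-1}v(\tau)\Vert_{L^\infty}\big)\Vert f(\tau)\Vert_{L^p}.$$
To prove it I would split $v\cdot\nabla f$ with Bony's decomposition \eqref{sq}. The paraproduct piece produces the genuine commutators $[\Delta_j,S_{k-1}v]\cdot\nabla\Delta_k f$ with $k\sim j$; by a first-order Taylor/convolution commutator bound of the type in Proposition \ref{pro11} each is controlled by $\Vert\nabla S_{k-1}v\Vert_{L^\infty}\Vert\Delta_k f\Vert_{L^p}$. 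The two para/remainder pieces $T_{\nabla f}v$ and $R(v,\nabla f)$ carry no derivative loss on $f$ once the gradient is integrated by parts onto $v$ (again using $\textnormal{div}\,v=0$) and are estimated by Bernstein and H\"older with the same velocity factor. The logarithmic weight $(j+1)$ arises exactly when bounding $\Vert\nabla S_{k-1}v\Vert_{L^\infty}$: I would separate the low block $\Vert\nabla\Delta_{-1}v\Vert_{L^\infty}$ and, for the high blocks, use Biot–Savart, $\nabla v=\nabla\nabla(-\Delta)^{-1}\omega$, a zeroth-order operator, so that $\Vert\nabla\Delta_\ell v\Vert_{L^\infty}\lesssim\Vert\Delta_\ell\omega\Vert_{L^\infty}\lesssim\Vert\omega\Vert_{L^\infty}$ for $0\le\ell\le k$; summing these $O(j)$ terms gives the factor $(j+1)\Vert\omega\Vert_{L^\infty}$. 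Finally I would insert the maximum principle $\Vert f(\tau)\Vert_{L^p}\le\Vert f^0\Vert_{L^p}$ from Lemma \ref{kj1}, integrate in $\tau$, and combine with the displayed inequality to reach the claim.

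I expect this last commutator estimate to be the main obstacle: it is where one must simultaneously avoid any derivative loss on $f$, keep the low-frequency velocity block $\nabla\Delta_{-1}v$ genuinely separate from the rest, and pay for the control of $\nabla v$ by the vorticity $\omega$ with precisely one logarithmic factor $(j+1)$ — which is exactly the shape of the right-hand side in the statement. The energy-dissipation part and the Duhamel bookkeeping are otherwise routine.
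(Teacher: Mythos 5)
The paper itself never proves this proposition: it is quoted from Hmidi--Keraani \cite{hk1}, and your proposal follows exactly the strategy of that reference --- frequency localization of \eqref{kp}, an $L^p$ energy estimate closed by the generalized Bernstein (coercivity) inequality and Duhamel's formula, a Bony-type commutator bound in which Biot--Savart converts the velocity blocks into $\Vert\nabla\Delta_{-1}v\Vert_{L^\infty}+(j+1)\Vert\omega\Vert_{L^\infty}$, and finally the maximum principle of Lemma \ref{kj1}. For any fixed $p\in(1,\infty)$ the chain of estimates you describe is correct. The genuine gap is at the endpoints, which you dispose of with ``a standard limiting/duality argument''. For $p=1$ this is neither standard nor, as described, workable: integrating by parts gives $\int_{\RR^3}(-\Delta f_j)\vert f_j\vert^{p-2}f_j\,dx=(p-1)\int_{\RR^3}\vert f_j\vert^{p-2}\vert\nabla f_j\vert^2\,dx$, so the coercivity constant in your differential inequality degenerates like $(p-1)$ and no limit from $p>1$ reaches $p=1$. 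Duality is obstructed for a structural reason: the dual problem is again a transport--diffusion equation with divergence-free drift, but its solution issued from the localized data $\widetilde{\Delta}_{j}\phi$ does \emph{not} remain spectrally localized (only the blocks $\Delta_j f$ of a given solution do, and precisely at the price of the commutator source), so the decay you need in the pairing $\langle\Delta_jf(t),\phi\rangle=\langle f^0,\psi(0)\rangle+\cdots$ is only available after re-running the entire commutator machinery on the dual problem; the commutator then enters twice and one lands on a bound that is quadratic in $(j+1)\Vert\omega\Vert_{L_t^1L^\infty}$ rather than the stated linear one. Even the case $p=\infty$ requires more than a passage to the limit: one needs the version of the generalized Bernstein inequality whose constant is uniform in $p\ge2$, and an approximation step, since an $L^\infty$ datum need not belong to any $L^p$ with $p<\infty$.

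There is also a smaller flaw in the commutator step. In the pieces $T_{\nabla f}v$ and $R(v,\nabla f)$ you estimate the velocity through plain blocks $\Vert\Delta_kv\Vert_{L^\infty}$; for $k\ge0$ Bernstein and Biot--Savart indeed give $\Vert\Delta_kv\Vert_{L^\infty}\lesssim2^{-k}\Vert\omega\Vert_{L^\infty}$, but the block $k=-1$ (which does occur for $0\le j\le3$) produces $\Vert\Delta_{-1}v\Vert_{L^\infty}$, a quantity controlled by neither $\Vert\omega\Vert_{L^\infty}$ nor $\Vert\nabla\Delta_{-1}v\Vert_{L^\infty}$, hence not admissible on the right-hand side. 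The standard repair is to split $v=\Delta_{-1}v+(\mathrm{Id}-\Delta_{-1})v$ \emph{before} applying Bony's decomposition and to keep the full commutator structure for the low part: since $\Delta_{-1}v$ has $O(1)$ frequencies, $[\Delta_{j},\Delta_{-1}v\cdot\nabla]f=\sum_{\vert k-j\vert\le2}[\Delta_{j},\Delta_{-1}v]\cdot\nabla\Delta_{k}f$, and Proposition \ref{pro11} then yields the bound $\Vert\nabla\Delta_{-1}v\Vert_{L^\infty}\Vert f\Vert_{L^p}$. With this repair, and with the statement restricted to $1<p\le\infty$ --- which is all the present paper ever uses, namely $p=2$ and $p=\bar p\in(3,6)$ in Proposition \ref{prop b} and Proposition \ref{prop c} --- your proof is complete and coincides with the cited one.
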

We will need to the following smoothing effects which are proved in \cite{tf010}.
\begin{prop}\label{pro xc} Let $v$ be a smooth divergence-free vector field of $\RR^3$ and $f$ be a smooth solution of \eqref{kp} with $\kappa=1$. Then we have for every $j\in\NN,$ $p \ge 2$ and $t\ge 0,$
$$\Vert\Delta_{j} f\Vert_{L_{t}^{\infty}L^{p}}+2^{2j}\displaystyle \int_{0}^{t}\Vert\Delta_{j} f(\tau)\Vert_{L^{p}}d\tau \lesssim \Vert\Delta_{j} f^0 \Vert_{L^{p}}+\displaystyle \int_0^t \Vert[\Delta_{j},v \cdot\nabla]f(\tau)\Vert_{L^{p}}d\tau+\int_0^t \Vert\Delta_{j}g(\tau)\Vert_{L^p}d\tau.$$
\end{prop}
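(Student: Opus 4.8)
The plan is to freeze the frequency, run an $L^p$ energy estimate on each dyadic block, and extract the parabolic gain $2^{2j}$ from the positivity of $-\Delta$ on spectrally localized functions. First I would apply $\Delta_j$ to \eqref{kp} with $\kappa=1$; writing $\Delta_j(v\cdot\nabla f)=v\cdot\nabla\Delta_jf+[\Delta_j,v\cdot\nabla]f$ and setting $f_j:=\Delta_jf$, the block solves
$$\partial_tf_j+v\cdot\nabla f_j-\Delta f_j=\Delta_jg-[\Delta_j,v\cdot\nabla]f.$$

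Next I would multiply this identity by $|f_j|^{p-2}f_j$ and integrate over $\RR^3$. Because $v$ is divergence-free, the advection term drops out, $\int_{\RR^3}(v\cdot\nabla f_j)\,|f_j|^{p-2}f_j\,dx=\tfrac1p\int_{\RR^3}v\cdot\nabla|f_j|^p\,dx=0$, so the left-hand side reduces to $\tfrac1p\tfrac{d}{dt}\|f_j\|_{L^p}^p$ together with the diffusion contribution $-\int_{\RR^3}(\Delta f_j)\,|f_j|^{p-2}f_j\,dx$. The right-hand side is controlled by H\"older's inequality by $\|f_j\|_{L^p}^{p-1}\big(\|\Delta_jg\|_{L^p}+\|[\Delta_j,v\cdot\nabla]f\|_{L^p}\big)$.

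The crux is a generalized Bernstein (positivity) inequality: for $p\ge2$ and $f_j$ with Fourier support in an annulus $\{|\xi|\sim2^j\}$, there exists $c>0$, independent of $j$ and $f_j$, such that
$$-\int_{\RR^3}(\Delta f_j)\,|f_j|^{p-2}f_j\,dx\ge c\,2^{2j}\|f_j\|_{L^p}^p.$$
This is the one nontrivial ingredient and the precise point where the hypothesis $p\ge2$ enters; it can be obtained from the pointwise C\'ordoba--C\'ordoba convexity inequality combined with the Bernstein lower bound of Lemma \ref{Bernstein}, or directly from the heat-kernel representation of $e^{t\Delta}$ acting on the localized block. Inserting it and dividing by $\|f_j\|_{L^p}^{p-1}$ yields the scalar differential inequality
$$\frac{d}{dt}\|f_j(t)\|_{L^p}+c\,2^{2j}\|f_j(t)\|_{L^p}\le\|\Delta_jg(t)\|_{L^p}+\|[\Delta_j,v\cdot\nabla]f(t)\|_{L^p}.$$

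Finally I would exploit this inequality in two complementary ways. The integrating factor $e^{c2^{2j}\tau}$ together with the bound $e^{-c2^{2j}(t-\tau)}\le1$ gives the uniform-in-time estimate for $\|f_j\|_{L^\infty_tL^p}$, while a plain integration over $[0,t]$ keeps the damping term on the left and produces $c\,2^{2j}\int_0^t\|f_j(\tau)\|_{L^p}\,d\tau\le\|\Delta_jf^0\|_{L^p}+\int_0^t\big(\|\Delta_jg\|_{L^p}+\|[\Delta_j,v\cdot\nabla]f\|_{L^p}\big)\,d\tau$. Adding the two bounds, with the implicit constant absorbing $1/c$, yields the stated estimate. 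The main obstacle is thus solely the generalized Bernstein positivity lemma; everything else is a standard divergence-free energy estimate followed by Gr\"onwall's lemma (the case $\|f_j\|_{L^p}=0$ being handled by the usual approximation argument).
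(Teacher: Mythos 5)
Your proposal is correct and takes essentially the same route as the paper's own source: the paper does not prove Proposition \ref{pro xc} but refers to \cite{tf010}, where the argument is exactly this frequency-localized $L^p$ energy estimate (advection killed by incompressibility, dissipation bounded below via the generalized Bernstein inequality for $p\ge 2$ and spectral support in an annulus, hence $j\in\NN$), followed by an integrating factor/integration in time. One small caveat: your parenthetical claim that the positivity lemma follows from C\'ordoba--C\'ordoba combined with the lower bound in Lemma \ref{Bernstein} is too quick, since $|f_j|^{p/2}$ is no longer spectrally localized and the lower Bernstein bound cannot be applied to it directly; however, the inequality $-\int_{\RR^3}\Delta f_j\,|f_j|^{p-2}f_j\,dx\ge c\,2^{2j}\Vert f_j\Vert_{L^p}^p$ is a known result for $p\ge2$ (due to Danchin and Planchon), so invoking it as a black box is legitimate and the rest of your argument stands.
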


\section{Commutator estimates}\label{A3}
In this section, we discuss the commutator between the bloc dyadic $\Delta_j$ and the convection operator $v\cdot\nabla.$ 
First, we start with the following estimate which whose proof can be found in \cite{t012}.
\begin{prop}\label{pr4} 
Let $v$ be a smooth divergence-free vector field of $\RR^3$ and $u$ be a smooth function. Then for every $s>0,$ $r\in[1,+\infty]$ and $\Psi\in\mathcal{U}$ given in Definition \ref{de1}, we have the following estimate
$$\Big(\Psi(j)2^{js}\Vert[\Delta_{j},v\cdot\nabla]u\Vert_{L^2}\Big)_{\ell^r}\lesssim\Vert\nabla v\Vert_{L^\infty}\Vert u\Vert_{B^{s,\Psi}_{2,r}}+\Vert\nabla u\Vert_{L^\infty}\Vert v\Vert_{B^{s,\Psi}_{2,r}}.$$
\end{prop}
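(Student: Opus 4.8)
The plan is to follow the classical Bony--paraproduct approach to commutator estimates, the only genuinely new feature being the bookkeeping of the heterogeneous weight $\Psi(j)$, which I will control through the two defining properties of the class $\mathcal{U}$. Since $\Delta_j$ commutes with each $\partial_i$, I would first write
$$[\Delta_j,v\cdot\nabla]u=\sum_{i=1}^{3}[\Delta_j,v^{i}]\partial_i u.$$
Applying Bony's decomposition \eqref{sq} to each product $v^{i}\partial_i u$ and to the transport term $v^{i}\Delta_j\partial_i u$, and using the spectral support properties recorded after Proposition \ref{prop15}, the commutator splits into three standard contributions $[\Delta_j,v\cdot\nabla]u=\mathrm{I}_j+\mathrm{II}_j+\mathrm{III}_j$, where (up to harmless shifts of the localization indices) $\mathrm{I}_j=\sum_{|k-j|\le 4}[\Delta_j,S_{k-1}v\cdot\nabla]\Delta_k u$ is the paraproduct--commutator term, $\mathrm{II}_j=\sum_{|k-j|\le 4}\Delta_j(\Delta_k v\cdot\nabla S_{k-1}u)$ comes from $T_{\nabla u}v$, and $\mathrm{III}_j=\sum_{k\ge j-3}\Delta_j(\Delta_k v\cdot\nabla\widetilde{\Delta}_k u)$ is the remainder.

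For $\mathrm{I}_j$ I would invoke the convolution--commutator bound of Proposition \ref{pro11}$(b)$ with $h=h_j$ the kernel of $\Delta_j$, $f=S_{k-1}v^{i}$ and $g=\partial_i\Delta_k u$: since $\Vert xh_j\Vert_{L^{1}}\lesssim 2^{-j}$, $\Vert\nabla S_{k-1}v\Vert_{L^{\infty}}\lesssim\Vert\nabla v\Vert_{L^{\infty}}$, and $\Vert\partial_i\Delta_k u\Vert_{L^{2}}\lesssim 2^{k}\Vert\Delta_k u\Vert_{L^{2}}$ by Bernstein (Lemma \ref{Bernstein}), the factor $2^{k-j}$ is $O(1)$ on $|k-j|\le 4$, giving $\Vert\mathrm{I}_j\Vert_{L^{2}}\lesssim\Vert\nabla v\Vert_{L^{\infty}}\sum_{|k-j|\le 4}\Vert\Delta_k u\Vert_{L^{2}}$. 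For $\mathrm{II}_j$ a direct H\"older--Bernstein estimate, using $\Vert S_{k-1}\nabla u\Vert_{L^{\infty}}\lesssim\Vert\nabla u\Vert_{L^{\infty}}$, yields $\Vert\mathrm{II}_j\Vert_{L^{2}}\lesssim\Vert\nabla u\Vert_{L^{\infty}}\sum_{|k-j|\le 4}\Vert\Delta_k v\Vert_{L^{2}}$. The remainder $\mathrm{III}_j$ is the only piece whose sum over $k$ is unbounded, and here I use the divergence-free condition to move the derivative outside, $\sum_i\Delta_k v^{i}\,\widetilde{\Delta}_k\partial_i u=\sum_i\partial_i(\Delta_k v^{i}\,\widetilde{\Delta}_k u)$; combined with $\Vert\widetilde{\Delta}_k u\Vert_{L^{\infty}}\lesssim 2^{-k}\Vert\nabla u\Vert_{L^{\infty}}$ this gives $\Vert\mathrm{III}_j\Vert_{L^{2}}\lesssim\Vert\nabla u\Vert_{L^{\infty}}\sum_{k\ge j-3}2^{j-k}\Vert\Delta_k v\Vert_{L^{2}}$.

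It then remains to multiply by $\Psi(j)2^{js}$ and take the $\ell^{r}$ norm. The crux is to replace the weight $\Psi(j)2^{js}$ by $\Psi(k)2^{ks}$ at the cost of a summable convolution kernel in $j-k$. Monotonicity (property $(a)$) gives $\Psi(j)\le\Psi(k)$ whenever $k\ge j$, while the bounded-ratio property $(b)$, iterated, gives $\Psi(j)\le C^{|j-k|}\Psi(k)$, which absorbs the finitely many terms with $k<j$; the dyadic factor $2^{(j-k)s}$ behaves the same way. After this substitution, $\mathrm{I}_j$ and $\mathrm{II}_j$ become finite-band convolutions of the sequences $(\Psi(k)2^{ks}\Vert\Delta_k u\Vert_{L^{2}})_k$ and $(\Psi(k)2^{ks}\Vert\Delta_k v\Vert_{L^{2}})_k$, and Young's inequality produces $\Vert\nabla v\Vert_{L^{\infty}}\Vert u\Vert_{B^{s,\Psi}_{2,r}}$ and $\Vert\nabla u\Vert_{L^{\infty}}\Vert v\Vert_{B^{s,\Psi}_{2,r}}$ respectively. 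For $\mathrm{III}_j$ the relevant kernel is $\sum_{k\ge j-3}2^{(j-k)(s+1)}$, which is finite precisely because $s+1>0$ (the hypothesis $s>0$ is more than enough), and Young's inequality again produces $\Vert\nabla u\Vert_{L^{\infty}}\Vert v\Vert_{B^{s,\Psi}_{2,r}}$. I expect the main obstacle to be exactly this weighted summation in $\mathrm{III}_j$: one must simultaneously exploit the divergence-free gain, which supplies the extra factor $2^{j-k}$ making the high-frequency sum converge, and the two structural properties of $\mathcal{U}$, which transfer $\Psi(j)$ onto $\Psi(k)$ without destroying that convergence, whereas $\mathrm{I}_j$ and $\mathrm{II}_j$ reduce to the classical estimate with a locally constant weight.
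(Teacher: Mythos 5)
The paper itself does not prove Proposition \ref{pr4}: it is quoted from \cite{t012}, so there is no internal proof to compare against. Your argument is the standard one for this type of estimate and is correct: Bony decomposition of the commutator, Proposition \ref{pro11} for the paraproduct-commutator piece, a direct H\"older--Bernstein bound for the $T_{\nabla u}v$ piece, the divergence-free gain for the remainder, and then transfer of the weight $\Psi(j)2^{js}$ onto $\Psi(k)2^{ks}$ via monotonicity and the bounded-ratio property of $\mathcal{U}$ followed by Young's inequality --- which is exactly how the cited reference proceeds, the weight bookkeeping being the only non-classical ingredient. One small point to tidy up: in $\mathrm{III}_j$ the inequality $\Vert\widetilde{\Delta}_{k}u\Vert_{L^{\infty}}\lesssim 2^{-k}\Vert\nabla u\Vert_{L^{\infty}}$ fails for the block $k=-1$ (constants are not controlled by $\nabla u$); that term arises only for $j\le 2$ and should instead be estimated directly as $\Vert\Delta_{-1}v\Vert_{L^{2}}\Vert\widetilde{\Delta}_{-1}\nabla u\Vert_{L^{\infty}}$ without moving the derivative, precisely as the paper does for the analogous low-frequency term $\textnormal{III}_1$ in the proof of Proposition \ref{prop jh}.
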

The main result of this section is to prove the following,
\begin{prop}\label{prop jh} Let $v$ be an axisymmetric smooth and divergence-free vector field without swirl and $\rho$ be an axisymmetric smooth scalar function. Then for every $j \ge -1$ and $1<p<\infty$, we have the following estimate,
$$\displaystyle\sum_{j\ge -1}\Big\Vert[\Delta_{j}, v\cdot\nabla]\rho\Big\Vert_{L^p}\lesssim\Vert v \Vert_{L^2}\Vert\rho\Vert_{L^p}+\Vert\frac{\omega_{\theta}}{r}\Vert_{L^{3,1}\cap L^{p}}\bigg(\Vert x_{h}\rho\Vert_{B^{0}_{\infty,1}}+\Vert\rho\Vert_{B_{p,1}^{0}\cap L^{\infty}}\bigg),$$
 where $\omega_\theta$ is the angular component of $\omega=\nabla\times v.$
\end{prop}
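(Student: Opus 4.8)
The plan is to recast the commutator in the form handled by Proposition~\ref{pro11} and then to use the axisymmetric geometry to replace the unavailable $L^\infty$ control of $\nabla v$ by bounds on $\omega_\theta/r$. Since $\operatorname{div}v=0$, writing $\Delta_j=h_j\ast\cdot$ and integrating by parts once against the kernel, the divergence hits $v$ and drops out, leaving the clean identity
$$[\Delta_{j},v\cdot\nabla]\rho=\sum_{k=1}^{3}\big[(\partial_{k}h_{j})\ast\,,\,v^{k}\big]\rho ,$$
i.e. a sum of convolution commutators of exactly the type controlled by Proposition~\ref{pro11}, with the favorable scaling $\Vert x\,\partial_{k}h_{j}\Vert_{L^{1}}=O(1)$ uniformly in $j$. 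I would first isolate the low frequencies of the velocity by splitting $v^{k}=\Delta_{-1}v^{k}+(\mathrm{Id}-\Delta_{-1})v^{k}$. For the $\Delta_{-1}v$ piece, Bernstein's Lemma~\ref{Bernstein} gives $\Vert\nabla\Delta_{-1}v\Vert_{L^{\infty}}\lesssim\Vert\Delta_{-1}v\Vert_{L^{\infty}}\lesssim\Vert v\Vert_{L^{2}}$, and Proposition~\ref{pro11}$(b)$ then produces precisely the term $\Vert v\Vert_{L^{2}}\Vert\rho\Vert_{L^{p}}$ after summation in $j$.

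The heart of the matter is the high-frequency velocity part, where I would exploit the structure without swirl. Since $v=v^{r}e_{r}+v^{z}e_{z}$, the horizontal velocity is $v_{h}=\frac{v^{r}}{r}\,x_{h}$, so the convection operator factorizes as $v_{h}\cdot\nabla_{h}=\frac{v^{r}}{r}\,x_{h}\cdot\nabla_{h}$. The idea is to keep the unbounded weight $x_{h}$ attached to $\rho$ rather than to the velocity: splitting each horizontal commutator as
$$\big[(\partial_{k}h_{j})\ast\,,\,\tfrac{v^{r}}{r}\,x_{k}\big]\rho=\big[(\partial_{k}h_{j})\ast\,,\,\tfrac{v^{r}}{r}\big](x_{k}\rho)+\tfrac{v^{r}}{r}\,\big[(\partial_{k}h_{j})\ast\,,\,x_{k}\big]\rho ,$$
transfers the factor $x_{k}$ onto $\rho$ and is what ultimately generates the norm $\Vert x_{h}\rho\Vert_{B^{0}_{\infty,1}}$; the purely vertical part $v^{z}\partial_{z}$ contributes the $\Vert\rho\Vert_{B^{0}_{p,1}\cap L^{\infty}}$ factor. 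Applying Proposition~\ref{pro11} to these pieces leaves only the scalar velocity factors $\frac{v^{r}}{r},\,v^{z}$ (and the relevant first derivatives), which I would estimate through the Biot--Savart law for axisymmetric fields: the $L^{\infty}$ bounds reduce, via the embedding \eqref{12}, to $\Vert\tfrac{\omega_{\theta}}{r}\Vert_{L^{3,1}}$, while the $L^{p}$ bounds follow from Calderón--Zygmund theory together with the identities expressing $\frac{v^{r}}{r}$ and $\nabla v$ in terms of $\omega_\theta/r$, yielding $\Vert\tfrac{\omega_{\theta}}{r}\Vert_{L^{p}}$.

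The remaining issue is summability in $j$: Proposition~\ref{pro11} is uniform in $j$ but carries no decay, so summing the crude bound would diverge. To recover the $\ell^{1}$ summation hidden in the Besov norms $B^{0}_{\infty,1}$ and $B^{0}_{p,1}$, I would perform a Bony decomposition \eqref{sq} of the products $v^{k}\rho$ and $v^{k}(x_{k}\rho)$, so that in the essential (diagonal and remainder) interactions the frequencies of $\rho$ are tied to $j$; this replaces the $L^{\infty}$/$L^{p}$ norms of $\rho$ and $x_h\rho$ by the localized blocks $\Vert\Delta_{j'}(x_h\rho)\Vert_{L^{\infty}}$ and $\Vert\Delta_{j'}\rho\Vert_{L^{p}}$ with $j'\approx j$, and the almost-orthogonality of the Littlewood--Paley decomposition then delivers convergence. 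The analysis of the paraproduct where $v$ carries the high frequencies can be handled in the spirit of Proposition~\ref{pr4}.

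I expect the main obstacle to be the simultaneous management of the \emph{unbounded} multiplier $x_{h}$ and the frequency localization: one must move the weight onto $\rho$ before expanding in Littlewood--Paley blocks, while making sure that the axisymmetric Biot--Savart conversion of $\nabla v$ and $v^r/r$ into $\omega_\theta/r$ is compatible with keeping the $\ell^{1}$-in-$j$ summation and does not lose a derivative. Controlling the commutator $[(\partial_k h_j)\ast\,,\,x_k]$ (which, being a commutator of a Fourier multiplier with multiplication by a coordinate, costs a factor $2^{-j}$ but must be recombined with the $2^{j}$ from $\partial_k$) is the delicate bookkeeping step that ties the whole estimate together.
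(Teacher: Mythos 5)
Your overall strategy --- transferring the weight $x_h$ from the velocity onto the density, invoking Proposition \ref{pro11} for the convolution commutators, and restoring $\ell^1$-summability in $j$ through a Bony decomposition --- is indeed the philosophy of the paper's proof, and your algebraic preliminaries (the identity $[\Delta_j,v\cdot\nabla]\rho=\sum_k[(\partial_kh_j)\ast\,,v^k]\rho$, the uniform bound $\Vert x\,\partial_kh_j\Vert_{L^1}=O(1)$, and the splitting that moves $x_k$ onto $\rho$) are correct. However, two of the steps you treat as routine are precisely where the difficulty lies, and as stated they are gaps. First, the vertical part. You claim that $v^z\partial_z$ contributes only $\Vert\rho\Vert_{B^0_{p,1}\cap L^\infty}$, with its velocity factors estimated by $\frac{\omega_\theta}{r}$ through Biot--Savart and Calder\'on--Zygmund theory. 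No such unweighted estimate can hold: $\nabla v^z$ is of the same order as the full vorticity $\omega_\theta$, not $\frac{\omega_\theta}{r}$. Concretely, translating a fixed vortex-ring profile to distance $R\gg 1$ from the axis leaves $\Vert\nabla v^z\Vert_{L^p}$ essentially unchanged while $\Vert\frac{\omega_\theta}{r}\Vert_{L^{3,1}\cap L^p}$ decays like $1/R$, so an inequality $\Vert\nabla v^z\Vert_{L^p}\lesssim\Vert\frac{\omega_\theta}{r}\Vert_{L^{3,1}\cap L^p}$ (or any $L^\infty$ analogue) is false. This is exactly why the paper derives the representation \eqref{kn}, $-v^3=x_h\cdot\Delta^{-1}\nabla_h(\frac{\omega_\theta}{r})+2\partial_{33}\Delta^{-2}(\frac{\omega_\theta}{r})$: the vertical component itself carries a weight $x_h$ that must also be passed onto $\rho$, and the vertical part genuinely produces $\Vert x_h\rho\Vert_{B^0_{\infty,1}}$ terms, not merely $\Vert\rho\Vert_{B^0_{p,1}\cap L^\infty}$.

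Second, the horizontal scalar factor. After factorizing $v_h=\frac{v^r}{r}\,x_h$, your term $[(\partial_kh_j)\ast\,,\frac{v^r}{r}](x_k\rho)$ requires, via Proposition \ref{pro11}-a), the bound $\Vert\nabla\frac{v^r}{r}\Vert_{L^p}\lesssim\Vert\frac{\omega_\theta}{r}\Vert_{L^p}$. This does not ``follow from Calder\'on--Zygmund theory'': $\frac{v^r}{r}$ is not a Calder\'on--Zygmund operator applied to $\frac{\omega_\theta}{r}$. The representation available from \eqref{br} reads $\frac{v^r}{r}=\Delta^{-1}\partial_3(\frac{\omega_\theta}{r})-2\frac{\partial_r}{r}\Delta^{-2}\partial_3(\frac{\omega_\theta}{r})$, and differentiating the second term makes the gradient fall on the bounded coefficients $b_{ij}$ of Lemma \ref{lem1}, which blow up like $1/r$ near the axis, so unweighted theory gives nothing. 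The estimate happens to be true, but a proof requires the elliptic identity $(\Delta+\frac{2}{r}\partial_r)\frac{v^r}{r}=\partial_z(\frac{\omega_\theta}{r})$ (a five-dimensional Laplacian acting on axisymmetric functions) combined with weighted Calder\'on--Zygmund theory for the $A_p$ weight $r^{-2}$ --- machinery present neither in the paper nor in your outline. The paper's detour is designed precisely to avoid this: using Lemma \ref{lem g} it commutes the weight out of the Biot--Savart law \emph{before} any commutator analysis, writing $v^1=x_1\Delta^{-1}\partial_3(\frac{\omega_\theta}{r})-2\partial_{13}\Delta^{-2}(\frac{\omega_\theta}{r})$, so that the scalar factors occurring in all commutators are explicit Riesz-type operators of $\frac{\omega_\theta}{r}$, controlled in $L^p$ by standard Calder\'on--Zygmund theory and in $L^\infty$ by \eqref{12}; the quantity $\nabla\frac{v^r}{r}$ never appears. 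Until you either prove the weighted estimate for $\nabla\frac{v^r}{r}$ and repair the vertical part, or switch to the paper's representation, the proof is incomplete.
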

\begin{proof} We first write by using the decomposition of Bony \eqref{sq} that,
\begin{eqnarray*}
\sum_{j\ge-1} \Big[\Delta_{j}, v\cdot\nabla\Big]\rho
&=& \sum_{j\ge-1}\sum_{i=1}^{3} \Big[\Delta_{j}, T_{v^{i}}\cdot\Big]\partial_{i}\rho+\sum_{j\ge-1}\sum_{i=1}^{3}\Big[\Delta_{j}, T_{\partial_{i}\cdot}\cdot v^{i}\Big]\rho\\
&+& \sum_{j\ge-1}\sum_{i=1}^{3}\Big[\Delta_{j}, R(v^{i}\cdot, \partial_{i})\Big]\rho\\
&:=& \textnormal{I}+\textnormal{II}+\textnormal{III}.
\end{eqnarray*}
\underline{\textbf{Estimate of \textnormal{I}}} :\\
We start with the estimate of the first component of  $\textnormal{I}$ that is for $i=1.$ Since $v$ is a divergence-free, we have $\Delta v=-\nabla\times\omega.$ Then for axisymmetric flows, we obtain that
\begin{eqnarray}\label{br}
\nonumber v^{1}(x)= \Delta^{-1}\partial_{3}\omega^2 &=& \Delta^{-1}\partial_{3}(x_{1}\frac{\omega_\theta}{r})\\
\nonumber &=& \Delta^{-1}(x_{1}\partial_{3}\frac{\omega_\theta}{r})\\
&=& x_{1}\Delta^{-1}\partial_{3}(\frac{\omega_\theta}{r})-2\partial_{13}\Delta^{-2}(\frac{\omega_\theta}{r}).
\end{eqnarray}
In the last line, we have used the following identity, see Lemma 2.10 in \cite{tf010} for a proof.
\begin{lem}\label{lem g}
For every $f\in\mathcal{S}(\RR^3,\RR)$ and $i,j \in\{1,2,3\},$ we have 
$$\Delta^{-1}(x_{i}\partial_{j}f)=x_{i}\Delta^{-1}\partial_{j}f-2\mathcal{R}_{i,j}\Delta^{-1}f,$$
where $\mathcal{R}_{i,j}=\partial_{i,j}\Delta^{-1}$ is the Riesz transform.
\end{lem}
Then we have
\begin{eqnarray}\label{bi}
\nonumber \sum_{j}\Big[\Delta_{j},T_{v^{1}}\cdot\Big]\partial_{1}\rho&=& \sum_{\vert q-j\vert\le 4}\Big[\Delta_{j},S_{q-1} v^{1}\Big]\Delta_{q}\partial_{1}\rho\\
\nonumber &=& \sum_{\vert q-j\vert\le 4}\Big[\Delta_{j},S_{q-1}\big(x_{1}\Delta^{-1}\partial_{3}(\frac{\omega_\theta}{r})\big)\Big]\Delta_{q}\partial_{1}\rho\\
&-& 2\sum_{\vert q-j \vert\le 4}\Big[\Delta_{j},S_{q-1}\partial_{13}\Delta^{-2}(\frac{\omega_\theta}{r})\Big]\Delta_{q}\partial_{1}\rho.
\end{eqnarray}
By the definition of $\Delta_q$, there exists a function $\varphi\in\mathcal{S}(\RR^{3})$ such that,
\begin{eqnarray*}\label{ci}
x_{1}\Delta_{q}\rho&=& x_{1} 2^{3q} \int_{\RR^3}\varphi(2^{q}(x-y))\rho(y) dy\\
&=& 2^{3q} \int_{\RR^3}\varphi(2^{q}(x-y))y_{1}\rho(y) dy+2^{3q} \int_{\RR^3}\varphi(2^{q}(x-y))(x_1-y_1)\rho(y) dy\\
&=& \Delta_{q}(x_{1}\rho)+2^{-q}2^{3q} \varphi_{1}(2^q\cdot)\ast\rho,
\end{eqnarray*}
where $\varphi_1(x)= x_1 \varphi(x).$ Consequently the commutator reads,
\begin{equation}\label{di}
[\Delta_q, x_1]\rho=-2^{2q}\varphi_{1}(2^q\cdot)\ast\rho.
\end{equation}
Similarly for the cutt-off $S_q$, we obtain
\begin{equation}\label{ei}
x_{1}S_{q}F=S_{q}(x_{1}F)+2^{2q}\chi_{1}(2^q\cdot)\ast F,
\end{equation}
where $\chi_{1}(x)= x_{1} \chi(x) \in\mathcal{S}(\RR^3).$\\
To estimate the first term of \eqref{bi}, we use \eqref{ei} to write 
\begin{eqnarray*}
\Big[\Delta_{j}, S_{q-1}\big(x_{1}\Delta^{-1}\partial_{3}(\frac{\omega_\theta}{r})\big)\Big]\Delta_{q}\partial_{1}\rho&=& \Big[\Delta_{j}, x_{1}S_{q-1}\big(\Delta^{-1}\partial_{3}(\frac{\omega_\theta}{r})\big)\Big]\Delta_{q}\partial_{1}\rho\\
&-& \Big[\Delta_{j}, 2^{2q} \chi_{1}(2^q\cdot)\ast\Delta^{-1}\partial_{3}(\frac{\omega_\theta}{r})\Big]\Delta_{q}\partial_{1}\rho\\
&=& \Big[\Delta_{j}, S_{q-1}\big(\Delta^{-1}\partial_{3}(\frac{\omega_\theta}{r})\big)\Big] x_{1}\Delta_{q}\partial_{1}\rho\\
&+& S_{q-1}\Delta^{-1}\partial_{3}(\frac{\omega_\theta}{r})\Big[\Delta_{j},x_{1}\Big]\Delta_{q}\partial_{1}\rho\\
&-&\Big[\Delta_{j}, 2^{2q} \chi_{1}(2^q\cdot)\ast\Delta^{-1}\partial_{3}(\frac{\omega_\theta}{r})\Big]\Delta_{q}\partial_{1}\rho\\
\end{eqnarray*}
Therefore, we obtain
\begin{eqnarray*}
\Big[\Delta_{j}, S_{q-1}\big(x_{1}\Delta^{-1}\partial_{3}(\frac{\omega_\theta}{r})\big)\Big]\Delta_{q}\partial_{1}\rho
&=& \Big[\Delta_{j}, S_{q-1}\big(\Delta^{-1}\partial_{3}(\frac{\omega_\theta}{r})\big)\Big]\partial_{1}(x_{1}\Delta_{q}\rho)\\
&-& \Big[\Delta_{j}, S_{q-1}\big(\Delta^{-1}\partial_{3}(\frac{\omega_\theta}{r})\big)\Big]\Delta_{q}\rho\\
&+& S_{q-1}\Delta^{-1}\partial_{3}(\frac{\omega_\theta}{r})\Big[\Delta_{j},x_{1}\Big]\Delta_{q}\partial_{1}\rho\\
&-&\Big[\Delta_{j}, 2^{2q} \chi_{1}(2^q\cdot)\ast\Delta^{-1}\partial_{3}(\frac{\omega_\theta}{r})\Big]\Delta_{q}\partial_{1}\rho.
\end{eqnarray*}
This gives by using \eqref{di},
\begin{eqnarray*}
\Big[\Delta_{j}, S_{q-1}\big(x_{1}\Delta^{-1}\partial_{3}(\frac{\omega_\theta}{r})\big)\Big]\Delta_{q}\partial_{1}\rho
&=& \Big[\Delta_{j}, S_{q-1}\big(\Delta^{-1}\partial_{3}(\frac{\omega_\theta}{r})\big)\Big]\partial_{1}\Delta_{q}(x_{1}\rho)\\
&+& \Big[\Delta_{j}, S_{q-1}\big(\Delta^{-1}\partial_{3}(\frac{\omega_\theta}{r})\big)\Big]\partial_{1}\big(2^{2q} \varphi_{1}(2^q\cdot)\ast\rho\big)\\
&-& \Big[\Delta_{j}, S_{q-1}\big(\Delta^{-1}\partial_{3}(\frac{\omega_\theta}{r})\big)\Big]\Delta_{q}\rho\\
&-& S_{q-1}\Delta^{-1}\partial_{3}(\frac{\omega_\theta}{r})\Big(2^{2j}\varphi_{1}(2^{j}\cdot)\ast\Delta_{q}\partial_{1}\rho\Big)\\
&-& \Big[\Delta_{j}, 2^{2q}\chi_{1}(2^q\cdot)\ast\Delta^{-1}\partial_{3}(\frac{\omega_\theta}{r})\Big]\Delta_{q}\partial_{1}\rho.
\end{eqnarray*}
Therefore,
\begin{eqnarray*}
\sum_{\vert q-j \vert\le 4}\Big[\Delta_{j}, S_{q-1}\big(x_{1}\Delta^{-1}\partial_{3}(\frac{\omega_\theta}{r})\big)\Big]\Delta_{q}\partial_{1}\rho=:=\textnormal{I}_1+\textnormal{I}_2+\textnormal{I}_3 +\textnormal{I}_4+\textnormal{I}_5,
\end{eqnarray*}
where,
\begin{eqnarray*}
\textnormal{I}_{1}&=&\sum_{\vert q-j \vert\le 4}\Big[\Delta_{j}, S_{q-1}\Big(\Delta^{-1}\partial_{3}(\frac{\omega_\theta}{r})\Big)\Big]\partial_{1}\Delta_{q}(x_{1}\rho),\\
\textnormal{I}_{2}&=& \sum_{\vert q-j \vert\le 4}\Big[\Delta_{j}, S_{q-1}\Big(\Delta^{-1}\partial_{3}(\frac{\omega_\theta}{r})\Big)\Big]\Big(2^{3q}(\partial_{1}\varphi_{1})(2^q\cdot)\ast\rho\Big),\\
\textnormal{I}_{3}&=&-\sum_{\vert q-j \vert\le 4}\Big[\Delta_{j}, S_{q-1}\Big(\Delta^{-1}\partial_{3}(\frac{\omega_\theta}{r})\Big)\Big]\Delta_{q}\rho,\\
\textnormal{I}_{4}&=& -\sum_{\vert q-j \vert\le 4} S_{q-1}\Delta^{-1}\partial_{3}(\frac{\omega_\theta}{r})\Big(2^{2j}\varphi_{1}(2^{j}\cdot)\ast\Delta_{q}\partial_{1}\rho\Big)\\
\textnormal{I}_{5}&=&-\sum_{\vert q-j \vert\le 4}\Big[\Delta_{j}, 2^{2q}\chi_{1}(2^q\cdot)\ast\Delta^{-1}\partial_{3}(\frac{\omega_\theta}{r})\Big]\Delta_{q}\partial_{1}\rho.
\end{eqnarray*}
\underline{Estimate of $\textnormal{I}_{1}.$}
We use Proposition \ref{pro11}-a), the continuity of Riesz transform in the $L^{p}$ space and Bernstein inequality,
\begin{eqnarray*}
\Vert\textnormal{I}_{1}\Vert_{L^{p}}&\le& \sum_{\vert q-j \vert\le 4}\Vert x h_{j}\Vert_{L^1}\Vert\nabla S_{q-1}\Delta^{-1}\partial_{3}(\frac{\omega_\theta}{r})\Vert_{L^{p}}\Vert\partial_{1}\Delta_{q}(x_{1}\rho)\Vert_{L^{\infty}}\\
&\lesssim& \sum_{\vert q-j \vert\le 4}2^{-j}\Vert x h \Vert_{L^1}\Vert\nabla\Delta^{-1}\partial_{3}(\frac{\omega_\theta}{r})\Vert_{L^{p}}2^{q}\Vert\Delta_{q}(x_{1}\rho)\Vert_{L^{\infty}}\\
&\lesssim& \Vert\frac{\omega_\theta}{r}\Vert_{L^{p}}\sum_{\vert q-j \vert\le 4}2^{q-j}\Vert\Delta_{q}(x_{1}\rho)\Vert_{L^{\infty}}\\
&\lesssim& \Vert\frac{\omega_\theta}{r}\Vert_{L^{p}}\Vert x_{1}\rho\Vert_{B^{0}_{\infty,1}},
\end{eqnarray*}
where $h_{j}(x)=2^{3j}h(2^{j}x)\in\mathcal{S}(\RR^3).$\\
\underline{Estimate of $\textnormal{I}_{2}.$} 
Using Proposition \ref{pro11}-a), the continuity of Riesz transform on Lebesgue space and the Young inequalities for convolution, we get 
\begin{eqnarray*}
\Vert\textnormal{I}_{2}\Vert_{L^{p}}&\le& \sum_{\vert q-j \vert\le 4}2^{-j}\Vert x h \Vert_{L^1}\Vert\nabla S_{q-1}\Delta^{-1}\partial_{3}(\frac{\omega_\theta}{r})\Vert_{L^{p}}\Vert 2^{3q} (\partial_{1}\varphi_{1})(2^q\cdot)\ast\rho\Vert_{L^{\infty}}\\
&\lesssim& \Vert\nabla\Delta^{-1}\partial_{3}(\frac{\omega_\theta}{r})\Vert_{L^{p}}\sum_{\vert q-j \vert\le 4}2^{-j}2^{3q}\Vert (\partial_{1}\varphi_{1})(2^q\cdot)\Vert_{L^1}\Vert\rho\Vert_{L^{\infty}}\\
&\lesssim& \Vert\frac{\omega_\theta}{r}\Vert_{L^{p}}\sum_{\vert q-j \vert\le 4}2^{q-j}2^{-q}\Vert \partial_{1}\varphi_{1}\Vert_{L^1}\Vert\rho\Vert_{L^{\infty}}\\
&\lesssim& \Vert\frac{\omega_\theta}{r}\Vert_{L^{p}}\Vert\rho\Vert_{L^{\infty}}.
\end{eqnarray*}
\underline{Estimate of $\textnormal{I}_{3}.$} We use Proposition \ref{pro11}-a) and the continuity of Riesz transform in the Lebesgue space
\begin{eqnarray*}
\Vert\textnormal{I}_{3}\Vert_{L^{p}}&\lesssim& \sum_{\vert q-j \vert\le 4}2^{-j}\Vert x h \Vert_{L^1}\Vert\nabla S_{q-1}\Delta^{-1}\partial_{3}(\frac{\omega_\theta}{r})\Vert_{L^{p}}\Vert\Delta_{q}\rho\Vert_{L^\infty}\\
&\lesssim& \Vert\frac{\omega_\theta}{r}\Vert_{L^{p}}\sum_{\vert q-j \vert\le 4}2^{-j}\Vert\Delta_{q}\rho\Vert_{L^\infty}\\
&\lesssim& \Vert\frac{\omega_\theta}{r}\Vert_{L^{p}}\Vert\rho\Vert_{L^\infty}.
\end{eqnarray*}
\underline{Estimate of $\textnormal{I}_{4}.$}
Using now H\"older inequality, the continuity of the operator $S_{q-1}$ in $L^\infty$ spaces, \eqref{12}, Young inequalities for convolution and Bernstein inequality, we get 
\begin{eqnarray*}
\Vert\textnormal{I}_{4}\Vert_{L^{p}}&\le& \sum_{\vert q-j \vert\le 4}\Vert S_{q-1}\Delta^{-1}\partial_{3}(\frac{\omega_\theta}{r})\Vert_{L^{\infty}}\Vert 2^{2\,j}\varphi_{1}(2^{j}\cdot)\ast\Delta_{q}\partial_{1}\rho\Vert_{L^{p}}\\
&\lesssim& \Vert\frac{\omega_\theta}{r}\Vert_{L^{3,1}}\sum_{\vert q-j \vert\le 4}2^{q-j}\Vert\varphi_{1}\Vert_{L^1}\Vert\Delta_{q}\rho\Vert_{L^p}\\
&\lesssim& \Vert\frac{\omega_\theta}{r}\Vert_{L^{3,1}}\Vert\rho\Vert_{B_{p,1}^{0}}.
\end{eqnarray*}
\underline{Estimate of $\textnormal{I}_{5}.$} We use Proposition \ref{pro11}-a), the Young inequality for the convolution, the continuity of Riesz transform in $L^p$ spaces and Bernstein inequality, we get
\begin{eqnarray*}
\Vert\textnormal{I}_{5}\Vert_{L^{p}}&\lesssim& \sum_{\vert q-j \vert\le 4} 2^{-j}\Vert x h \Vert_{L^1}\Vert2^{2q}\chi_{1}(2^{q}\cdot)\ast\nabla\Delta^{-1}\partial_{3}(\frac{\omega_\theta}{r})\Vert_{L^{p}}\Vert\Delta_{q}\partial_{1}\rho\Vert_{L^{\infty}}\\ 
&\lesssim& \sum_{\vert q-j \vert\le 4}2^{-j}2^{2q}\Vert\chi_{1}(2^{q}\cdot)\Vert_{L^1}\Vert\nabla\Delta^{-1}\partial_{3}(\frac{\omega_\theta}{r})\Vert_{L^{p}}2^{q}\Vert\Delta_{q}\rho\Vert_{L^{\infty}}\\
&\lesssim& \Vert\frac{\omega_\theta}{r}\Vert_{L^{p}}\sum_{\vert q-j \vert\le 4}2^{-j}2^{-q}\Vert\chi\Vert_{L^1}2^{q}\Vert\Delta_{q}\rho\Vert_{L^{\infty}}\\
&\lesssim& \Vert\frac{\omega_\theta}{r}\Vert_{L^{p}}\Vert\rho\Vert_{L^{\infty}}.
\end{eqnarray*}
Finally, we obtain
\begin{equation}\label{kl}
\sum_{\vert q-j \vert\le 4}\Big\Vert\Big[\Delta_{j},S_{q-1}\big(x_{1}\Delta^{-1}\partial_{3}(\frac{\omega_\theta}{r})\big)\Big]\Delta_{q}\partial_{1}\rho\Big\Vert_{L^{p}}\lesssim \Vert\frac{\omega_\theta}{r}\Vert_{L^{3,1}\cap L^{p}}\big(\Vert x_{1}\rho\Vert_{B^{0}_{\infty,1}}+\Vert\rho\Vert_{B_{p,1}^{0}\cap L^{\infty}}\big).
\end{equation}
To estimate the second term of \eqref{bi}, we use Proposition \ref{pro11}-b), Bernstein inequality, \eqref{12} and the continuity of the Riesz transform on the Lorentz spaces, we obtain
\begin{eqnarray}\label{km}
\nonumber \sum_{\vert q-j \vert\le 4}\Big\Vert\Big[\Delta_{j},S_{q-1}\partial_{13}\Delta^{-2}(\frac{\omega_\theta}{r})\Big]\Delta_{q}\partial_{1}\rho\Big\Vert_{L^{p}}&\lesssim& \sum_{\vert q-j \vert\le 4}\Vert x h_{j}\Vert_{L^1}\Vert \nabla S_{q-1}\partial_{13}\Delta^{-2}(\frac{\omega_\theta}{r})\Vert_{L^{\infty}}\Vert\Delta_{q}\partial_{1}\rho\Vert_{L^{p}}\\
\nonumber &\lesssim& \sum_{\vert q-j \vert\le 4}2^{q-j}\Vert xh\Vert_{L^1}\Vert S_{q-1}\partial_{13}\Delta^{-1}(\frac{\omega_\theta}{r})\Vert_{L^{3,1}}\Vert\Delta_{q}\rho\Vert_{L^p}\\
&\lesssim&  \Vert\frac{\omega_\theta}{r}\Vert_{L^{3,1}}\Vert\rho\Vert_{B_{p,1}^{0}}.
\end{eqnarray}
Plugging \eqref{kl} and \eqref{km} into \eqref{bi} we get 
\begin{equation*}
\sum_{j}\Big\Vert\Big[\Delta_{j}, T_{v^{1}}\cdot\Big]\partial_{1}\rho\Big\Vert_{L^{p}}\lesssim \Vert\frac{\omega_\theta}{r}\Vert_{L^{3,1}\cap L^{p}}\Big(\Vert x_{1}\rho\Vert_{B^{0}_{\infty,1}} +\Vert\rho\Vert_{B_{p,1}^{0}\cap L^{\infty}}\Big).
\end{equation*}
The term $\sum_{j}[\Delta_{j},T_{v^2}\cdot]\partial_{2}\rho$ can be estimated in the same way as above and we also obtain the estimate,
$$\sum_{j}\Big\Vert\Big[\Delta_{j}, T_{v^{2}}\cdot\Big]\partial_{2}\rho\Big\Vert_{L^{p}}\lesssim \Vert\frac{\omega_\theta}{r}\Vert_{L^{3,1}\cap L^{p}}\Big(\Vert x_{2}\rho\Vert_{B^{0}_{\infty,1}}+\Vert\rho\Vert_{B_{p,1}^{0}\cap L^{\infty}}\Big).$$
The estimate of the term $\sum_{j}[\Delta_{j},T_{v^3}\cdot]\partial_{3}\rho$ will be done as follows: Since we have,
\begin{eqnarray*}
\Delta v^{3}&=& -(\nabla\times\omega)_{3}\\
&=&-(\partial_{r}\omega_\theta+\frac{\omega_\theta}{r})\\
&=&-\Big(r\partial_{r}\big(\frac{\omega_\theta}{r}\big)+2\frac{\omega_\theta}{r}\Big)\\
&=& -\Big(x_{h}\cdot\nabla_{h}(\frac{\omega_\theta}{r})+2\frac{\omega_\theta}{r}\Big).
\end{eqnarray*}
Using Lemma \ref{lem g}, we get
\begin{eqnarray}\label{kn}
\nonumber -v^{3}(x)&=& \Delta^{-1}\Big(x_{h}\cdot\nabla_{h}(\frac{\omega_\theta}{r})\Big)+2\Delta^{-1}(\frac{\omega_\theta}{r})\\
\nonumber &=& x_{h}\cdot\Delta^{-1}\nabla_{h}(\frac{\omega_\theta}{r})-2\sum^{2}_{i=1}\partial_{ii}\Delta^{-2}(\frac{\omega_\theta}{r})+2\Delta^{-1}(\frac{\omega_\theta}{r})\\
&=& x_{h}\cdot\Delta^{-1}\nabla_{h}(\frac{\omega_\theta}{r})+2\partial_{33}\Delta^{-2}(\frac{\omega_\theta}{r}).
\end{eqnarray}
Then we have a decomposition of the commutator under the form,
\begin{eqnarray*}
-\sum_{j}\Big[\Delta_{j},T_{v^3}\cdot\Big]\partial_{3}\rho&=& \sum_{\vert q-j \vert\le 4}\sum^{2}_{k=1}\Big[\Delta_{j},S_{q-1}\Big(x_{k}\Delta^{-1}\partial_{k}(\frac{\omega_\theta}{r})\Big)\Big]\Delta_{q}\partial_{3}\rho\\
&+& 2\sum_{\vert q-j \vert\le 4}\Big[\Delta_{j},S_{q-1}\partial_{33}\Delta^{-2}(\frac{\omega_\theta}{r})\Big]\Delta_{q}\partial_{3}\rho.
\end{eqnarray*}
This identity looks like \eqref{bi} and then by reproducing the same analysis, we get 
\begin{equation*}
\sum_{j}\Big\Vert\Big[\Delta_{j}, T_{v^{3}}\cdot\Big]\partial_{3}\rho\Big\Vert_{L^{p}}\lesssim\Vert\frac{\omega_\theta}{r}\Vert_{L^{3,1}\cap L^{p}}\Big(\sum_{k=1}^{2}\Vert x_{k}\rho\Vert_{B^{0}_{\infty,1}}+\Vert\rho\Vert_{B_{p,1}^{0}\cap L^{\infty}}\Big).
\end{equation*}
\underline{\textbf{Estimate of \textnormal{II}}} :\\
Let us now turn to the estimate of the second term $\textnormal{II}$. We use \eqref{br}, \eqref{di} and \eqref{ei} with the same computations as for the term $\textnormal{I}$ we get 
\begin{eqnarray*}
\sum_{j}\Big[\Delta_{j}, T_{\partial_1}\cdot v^{1}\Big]\rho&=& \sum_{\vert q-j \vert\le 4}\Big[\Delta_{j},\Delta_{q}v^{1}\Big]S_{q-1}\partial_{1}\rho\\
&:=& \textnormal{II}_1+\textnormal{II}_2+\textnormal{II}_3+\textnormal{II}_4+\textnormal{II}_5+\textnormal{II}_6,
\end{eqnarray*} 
where 
\begin{eqnarray*}
\textnormal{II}_{1}&=& \sum_{\vert q-j \vert\le 4}\Big[\Delta_{j},\Delta_{q}\Delta^{-1}\partial_{3}(\frac{\omega_\theta}{r})\Big]   \partial_{1}S_{q-1}(x_{1}\rho)\\
\textnormal{II}_{2}&=& \sum_{\vert q-j \vert\le 4}\Big[\Delta_{j},\Delta_{q}\Delta^{-1}\partial_{3}(\frac{\omega_\theta}{r})\Big]\Big(2^{3q}(\partial_{1}\chi_{1})(2^{q}\cdot)\ast\rho\Big)\\
\textnormal{II}_{3}&=& -\sum_{\vert q-j \vert\le 4}\big[\Delta_{j}, \Delta_{q}\Delta^{-1}\partial_{3}(\frac{\omega_\theta}{r})\Big]S_{q-1}\rho\\
\textnormal{II}_{4}&=& -\sum_{\vert q-j \vert\le 4}\Delta_{q}\Delta^{-1}\partial_{3}(\frac{\omega_\theta}{r}) \Big(2^{2j}\chi_{1}(2^{j}\cdot)\ast S_{q-1}\partial_{1}\rho\Big)\\
\textnormal{II}_{5}&=& -\sum_{\vert q-j \vert\le 4}\Big[\Delta_{j}, 2^{2q}\varphi_{1}(2^q\cdot)\ast\Delta^{-1}\partial_{3}(\frac{\omega_\theta}{r})\Big] S_{q-1}\partial_{1}\rho\\
\textnormal{II}_{6}&=& -2\sum_{\vert q-j \vert\le 4}\Big[\Delta_{j}, \Delta_{q}\partial_{13}\Delta^{-2}(\frac{\omega_\theta}{r})\Big]S_{q-1}\partial_{1}\rho.
\end{eqnarray*}
To estimate $\textnormal{II}_1,$ we do not need to use the structure of the commutator. We will use H\"older and Bernstein inequalities and the following estimate, we have for every $p\in[1,+\infty]$ that
\begin{equation}\label{w}
\Vert\Delta_{q}\Delta^{-1}\partial_{3} f \Vert_{L^p}\lesssim 2^{-q}\Vert\nabla\Delta_{q}\Delta^{-1}\partial_{3} f \Vert_{L^p}\lesssim 2^{-q}\Vert f \Vert_{L^p}\,,\;\;\;\forall q\ge 0.
\end{equation}
Thus we have,
\begin{eqnarray*}
\Vert\textnormal{II}_{1}\Vert_{L^{p}}&\lesssim& \sum_{\vert q-j \vert \le 4}\Vert\Delta_{q}\big(\Delta^{-1}\partial_{3}(\frac{\omega_\theta}{r})\big)\Vert_{L^p}\Vert\partial_{1}S_{q-1}(x_{1}\rho)\Vert_{L^\infty}\\
&\lesssim& \Vert\frac{\omega_\theta}{r}\Vert_{L^p}\sum_{\vert q-j \vert\le 4}2^{-q}\sum_{-1\le k\le q-2}2^{k}\Vert\Delta_{k}(x_{1}\rho)\Vert_{L^\infty}\\
&\lesssim& \Vert\frac{\omega_\theta}{r}\Vert_{L^{p}}\Vert x_{1}\rho\Vert_{B^{0}_{\infty,1}}.
\end{eqnarray*}
The terms $\textnormal{II}_{2}$, $\textnormal{II}_{3}$, $\textnormal{II}_{4}$, $\textnormal{II}_{5}$ and $\textnormal{II}_{6}$ can be estimated in the similar way of $\textnormal{I}_{2}$, $\textnormal{I}_{3}$, $\textnormal{I}_{4}$, $\textnormal{I}_{5}$ and the second term of \eqref{bi}.
Finally, we conclude that
\begin{equation*}
\Vert\textnormal{II}\Vert_{L^{p}}\lesssim\Vert\frac{\omega_\theta}{r}\Vert_{L^{3,1}\cap L^{p}}\Big(\sum_{i=1}^{2}\Vert x_{i}\rho \Vert_{B^{0}_{\infty,1}}+\Vert\rho\Vert_{B_{p,1}^{0}\cap L^{\infty}}\Big).
\end{equation*}
\underline{\textbf{Estimate of \textnormal{III}}} :\\
Let us now move to the remainder term. We separate it into two terms : the high frequency term and the low frequency term, 
\begin{eqnarray}\label{bb}
\nonumber \textnormal{III}&=& \sum_{j}\Big[\Delta_{j}, R(v, \nabla)\Big]\rho\\
\nonumber &=& \sum_{q\ge j-4}\Big[\Delta_{j}, \Delta_{q}v\widetilde{\Delta}_{q}\nabla\Big]\rho\\
\nonumber &=& \sum^{3}_{j=-1}\Big[\Delta_{j}, \Delta_{-1}v\widetilde{\Delta}_{-1}\nabla\Big]\rho+\sum_{q\ge j-4, \atop{q\in\NN}}\Big[\Delta_{j}, \Delta_{q}v\widetilde{\Delta}_{q}\nabla\Big]\rho\\
&=& \textnormal{III}_1+\textnormal{III}_2.
\end{eqnarray}
To treat the first term, we use Proposition \ref{pro11}-b) and Bernstein inequality
\begin{eqnarray}\label{ab}
\nonumber \Vert\textnormal{III}_{1}\Vert_{L^p}&\le& \sum^{3}_{j=-1}\Vert xh{j}\Vert_{L^1}\Vert\nabla\Delta_{-1}v \Vert_{L^\infty}\Vert\widetilde{\Delta}_{-1}\nabla\rho\Vert_{L^p}\\
\nonumber &\lesssim& \sum^{3}_{j=-1}2^{-j}\Vert xh \Vert_{L^1}\Vert\Delta_{-1}v \Vert_{L^2}\Vert\widetilde{\Delta}_{-1}\rho\Vert_{L^p}\\
&\lesssim& \Vert v \Vert_{L^2}\Vert\rho\Vert_{L^p}.
\end{eqnarray}
For the second term $\textnormal{III}_{2},$ we first write the term inside the sum as follows : 
\begin{eqnarray*}
\Big[\Delta_{j},\Delta_{q}v^{i}\widetilde{\Delta}_{q}\partial_{i}\Big]\rho&=& \Delta_{j}\big(\Delta_{q}v^{i}\widetilde{\Delta}_{q}\partial_{i}\rho\big) -\Delta_{q}v^{i}\widetilde{\Delta}_{q}\partial_{i} \Delta_{j}\rho\\
&=& \Delta_{j}\partial_{i}\big(\Delta_{q}v^{i}\widetilde{\Delta}_{q}\rho\big)- \Delta_{j}\big(\Delta_{q}\partial_{i}v^{i}\widetilde{\Delta}_{q}\rho\big)\\
&-&\partial_{i}
\big(\Delta_{q}v^{i}\widetilde{\Delta}_{q}\Delta_{j}\rho\big)+\Delta_{q}\partial_{i}v^{i}\widetilde{\Delta}_{q}\Delta_{j} \rho.
\end{eqnarray*}
Summing over $i=\{1,2,3\}$ and using the incompressibility of the velocity, we get
$$\sum^{3}_{i=1}\Big[\Delta_{j},\Delta_{q}v^{i}\widetilde{\Delta}_{q}\partial_{i}\Big]\rho=\sum^{3}_{i=1}\bigg(\Delta_{j}\partial_{i} \big(\Delta_{q}v^{i}\widetilde{\Delta}_{q}\rho\big)-\partial_{i} \big(\Delta_{q}v^{i}\widetilde{\Delta}_{q}\Delta_{j}\rho\big)\bigg).$$
Since
$$\widetilde{\Delta}_{q}\Delta_{j}\rho=0\;\,\textnormal{if}\;\,\vert q-j\vert\ge 4\;\;\textnormal{and}\;\;
\Delta_{j}(\Delta_{q}v^{i}\widetilde{\Delta}_{q}\rho)=0\;\,\textnormal{if}\;\,j\ge q+4,$$ we obtain
\begin{eqnarray*}
\textnormal{III}_{2}&=&\sum^{3}_{i=1}\bigg(\sum_{q\ge j-4,\atop{q\in\NN}}\Delta_{j}\partial_{i}\big(\Delta_{q}v^{i}\widetilde{\Delta}_{q}\rho\big)-\sum_{\vert q-j \vert\le 3,\atop{q\in\NN}}\partial_{i}\big(\Delta_{q}v^{i}\widetilde{\Delta}_{q}\Delta_{j}\rho\big)\bigg)\\
&=& \sum_{i=1}^{3}\textnormal{III}_{2}^{i}.
\end{eqnarray*}
Let us now estimate $\textnormal{III}_{2}^{1}.$ Firstly, we follows the same decomposition as \eqref{bi},
\begin{eqnarray*}
\textnormal{III}_{2}^{1}&=& \sum_{q\ge j-4,\atop{q\in\NN}}\Delta_{j}\partial_{1}\big(\Delta_{q}v^{1}\widetilde{\Delta}_{q}\rho\big)-\sum_{\vert q-j \vert\le 3,\atop{q\in\NN}}\partial_{1}\big(\Delta_{q}v^{1}\widetilde{\Delta}_{q}\Delta_{j}\rho\big)\\
&=& \textnormal{III}_{21}^{1}+\textnormal{III}_{22}^{1}+\textnormal{III}_{23}^{1}+\textnormal{III}_{24}^{1}, 
\end{eqnarray*} 
where 
\begin{eqnarray*} 
\textnormal{III}_{21}^{1}&=& \sum_{q\ge j-4,\atop{q\in\NN}}\Delta_{j}\partial_{1}\Big(\Delta_{q}\big(x_{1}\Delta^{-1}\partial_{3}(\frac{\omega_\theta}{r})\big)\widetilde{\Delta}_{q}\rho\Big)\\
\textnormal{III}_{22}^{1}&=& -2\sum_{q\ge j-4,\atop{q\in\NN}} \Delta_{j}\partial_{1}\Big(\Delta_{q}\partial_{13}\Delta^{-2}(\frac{\omega_\theta}{r})\widetilde{\Delta}_{q}\rho\Big)\\
\textnormal{III}_{23}^{1}&=& -\sum_{\vert q-j\vert\le 3,\atop{q\in\NN}}\partial_{1}\Big(\Delta_{q}\big(x_{1}\Delta^{-1}\partial_{3}(\frac{\omega_\theta}{r})\big)\widetilde{\Delta}_{q}\Delta_{j}\rho\Big)\\
\textnormal{III}_{24}^{1}&=& 2\sum_{\vert q-j\vert\le 3,\atop{q\in\NN}}\partial_{1}\Big(\Delta_{q}\partial_{13}\Delta^{-2}(\frac{\omega_\theta}{r})\widetilde{\Delta}_{q}\Delta_{j}\rho\Big). 
\end{eqnarray*}
\underline{Estimate of $\textnormal{III}_{21}^{1}.$}
To estimate the first term $\textnormal{III}_{21}^{1}$, we write by using \eqref{di},
\begin{eqnarray*}
\textnormal{III}_{21}^{1}&=& \sum_{q\ge j-4,\atop{q\in\NN}}\partial_{1}\Delta_{j}\Big(\Delta_{q}\big(\Delta^{-1}\partial_{3}(\frac{\omega_\theta}{r})\big)x_{1}\widetilde{\Delta}_{q}\rho\Big)-\sum_{q\ge j-4,\atop{q\in\NN}}\partial_{1}\Delta_{j}\Big(\big(2^{2q}\varphi_{1}(2^{q}\cdot)\ast\Delta^{-1}\partial_{3}(\frac{\omega_\theta}{r})\big)\widetilde{\Delta}_{q}\rho\Big)\\
&:=& \textnormal{III}_{211}^{1}+\textnormal{III}_{212}^{1}+\textnormal{III}_{213}^{1},
\end{eqnarray*}
where
\begin{eqnarray*}
\textnormal{III}_{211}^{1}&=& \sum_{q\ge j-4,\atop{q\in\NN}}\partial_{1}\Delta_{j}\Big(\Delta_{q}\Delta^{-1}\partial_{3}(\frac{\omega_\theta}{r})\widetilde{\Delta}_{q}(x_{1}\rho)\Big)\\
\textnormal{III}_{212}^{1}&=& \sum_{q\ge j-4,\atop{q\in\NN}}\partial_{1}\Delta_{j}\Big(\Delta_{q}\Delta^{-1}\partial_{3}(\frac{\omega_\theta}{r})\big(2^{2q}\varphi_{1}(2^{q}\cdot)\ast\rho\big)\Big)\\
\textnormal{III}_{213}^{1}&=& -\sum_{q\ge j-4,\atop{q\in\NN}}\partial_{1}\Delta_{j}\Big(\big(2^{2q}\varphi_{1}(2^{q}\cdot)\ast\Delta^{-1}\partial_{3}(\frac{\omega_\theta}{r})\big)\widetilde{\Delta}_{q}\rho\Big).       
\end{eqnarray*}
To estimate the term $\textnormal{III}_{211}^{1},$ we use Bernstein and H\"older inequalities and \eqref{w}, we find
\begin{eqnarray*}
\Vert\textnormal{III}_{211}^{1}\Vert_{L^{p}}&\lesssim& \sum_{q\ge j-4,\atop{q\in\NN}} 2^{j}\Vert\Delta_{q}\Delta^{-1}\partial_{3}(\frac{\omega_\theta}{r})\Vert_{L^{p}}\Vert\widetilde{\Delta}_{q}(x_{1}\rho)\Vert_{L^{\infty}}\\
&\lesssim& \sum_{q\ge j-4,\atop{q\in\NN}}2^{j-q}\Vert\nabla\Delta_{q}\Delta^{-1}\partial_{3}(\frac{\omega_\theta}{r})\Vert_{L^{p}}\Vert\widetilde{\Delta}_{q}(x_{1}\rho)\Vert_{L^\infty}\\
&\lesssim& \Vert\frac{\omega_\theta}{r}\Vert_{L^{p}}\sum_{q\ge j-4,\atop{q\in\NN}}2^{j-q}\Vert\widetilde{\Delta}_{q}(x_{1}\rho)\Vert_{L^{\infty}}\\
&\lesssim& \Vert\frac{\omega_\theta}{r}\Vert_{L^{p}}\Vert x_{1}\rho \Vert_{B^{0}_{\infty,1}}.
\end{eqnarray*}
Now to estimate the terms $\textnormal{III}_{212}^{1},$ we use Young inequality and \eqref{w}, we find 
\begin{eqnarray*}
\Vert\textnormal{III}_{212}^{1}\Vert_{L^{p}}&\lesssim& \sum_{q\ge j-4,\atop{q\in\NN}}2^{j}\Vert\Delta_{q}\Delta^{-1}\partial_{3}(\frac{\omega_\theta}{r})\Vert_{L^{p}}\Vert 2^{2q}\varphi_{1}(2^{q}\cdot)\ast\rho\Vert_{L^{\infty}}\\
&\lesssim& \Vert\frac{\omega_\theta}{r}\Vert_{L^{p}}\sum_{q\ge j-4,\atop{q\in\NN}}2^{j-q}2^{-q}\Vert\varphi_{1}\Vert_{L^1}\Vert\rho\Vert_{L^{\infty}}\\
&\lesssim& \Vert\frac{\omega_\theta}{r}\Vert_{L^{p}}\Vert\rho\Vert_{L^{\infty}}.
\end{eqnarray*}
The term $\textnormal{III}_{213}^{1}$ can be estimated by using Bernstein and H\"older inequalities, combined with the convolution inequality and \eqref{12},
\begin{eqnarray*}
\Vert\textnormal{III}_{213}^{1}\Vert_{L^{p}}
&\lesssim& \sum_{q\ge j-4,\atop{q\in\NN}}2^{j}\Vert 2^{2q}\varphi_{1}(2^{q}\cdot)\ast\Delta^{-1}\partial_{3}(\frac{\omega_\theta}{r}) \Vert_{L^{\infty}}\Vert\widetilde{\Delta}_{q}\rho\Vert_{L^{p}}\\
&\lesssim& \sum_{q\ge j-4,\atop{q\in\NN}}2^{j-q}\Vert\varphi_{1}\Vert_{L^1}\Vert\Delta^{-1}\partial_{3}(\frac{\omega_\theta}{r})\Vert_{L^{\infty}}\Vert\widetilde{\Delta}_{q}\rho\Vert_{L^{p}}\\
&\lesssim& \Vert\frac{\omega_\theta}{r}\Vert_{L^{3,1}}\sum_{q\ge j-4,\atop{q\in\NN}}2^{j-q}\Vert\widetilde{\Delta}_{q}\rho\Vert_{L^{p}}\\
&\lesssim& \Vert\frac{\omega_\theta}{r}\Vert_{L^{3,1}}\Vert\rho\Vert_{B_{p,1}^{0}}.
\end{eqnarray*}
Thus, we obtain
\begin{equation}\label{ac}
\Vert\textnormal{III}_{21}^{1}\Vert_{L^{p}}\lesssim\Vert\frac{\omega_\theta}{r}\Vert_{L^{3,1}\cap L^{p}}\Big(\Vert x_{1}\rho\Vert_{B_{\infty,1}^{0}}+\Vert\rho\Vert_{B^{0}_{p,1}\cap L^{\infty}}\Big).
\end{equation}
\underline{Estimate of $\textnormal{III}_{22}^{1}.$} 
Thanks to the Bernstein inequality, we have for every $p\in[1,+\infty]$ that,
\begin{equation}\label{ww}
\Vert\Delta_{q}\partial_{13}\Delta^{-2}f \Vert_{L^p}\lesssim 2^{-2q}\Vert\nabla^{2}\Delta_{q}\partial_{13}\Delta^{-2} f \Vert_{L^p}\lesssim 2^{-2q}\Vert f \Vert_{L^p}\,,\;\;\;\forall q\ge 0.
\end{equation}
This yields to,
\begin{eqnarray}\label{ad}
\nonumber \Vert\textnormal{III}_{22}^{1}\Vert_{L^{p}}&\lesssim& \sum_{q\ge j-4,\atop{q\in\NN}} 2^{j}\Vert\Delta_{q}\partial_{13}\Delta^{-2}(\frac{\omega_\theta}{r})\Vert_{L^p}\Vert\widetilde{\Delta}_{q}\rho\Vert_{L^{\infty}}\\
\nonumber& \lesssim& \Vert\frac{\omega_\theta}{r}\Vert_{L^{p}}\sum_{q\ge j-4,\atop{q\in\NN}}2^{j-q}2^{-q}\Vert\widetilde{\Delta}_{q}\rho\Vert_{L^{\infty}}\\
&\lesssim& \Vert\frac{\omega_\theta}{r}\Vert_{L^{p}}\Vert\rho\Vert_{L^{\infty}}.
\end{eqnarray}
\underline{Estimate of $\textnormal{III}_{23}^{1}.$}
This term can be written in the similar way as $\textnormal{III}_{21}^{1}$ and we get finally:
\begin{eqnarray*}
\textnormal{III}_{23}^{1}= \textnormal{III}_{231}^{1}+\textnormal{III}_{232}^{1}+\textnormal{III}_{233}^{1}+\textnormal{III}_{234}^{1},
\end{eqnarray*}
where
\begin{eqnarray*}
\textnormal{III}_{231}^{1}&=& -\sum_{\vert q-j\vert\le 3,\atop{q\in\NN}}\partial_{1}\Big(\Delta_{q}\Delta^{-1}\partial_{3}(\frac{\omega_\theta}{r})\widetilde{\Delta}_{q}\Delta_{j}\big(x_{1}\rho\big)\Big)\\
\textnormal{III}_{232}^{1}&=& -\sum_{\vert q-j \vert\le 3,\atop{q\in\NN}}\partial_{1}\Big(\Delta_{q}\Delta^{-1}\partial_{3}(\frac{\omega_\theta}{r})\widetilde{\Delta}_{q}\big(2^{2j}\varphi_{1}(2^{j}\cdot)\ast\rho\big)\Big)\\
\textnormal{III}_{233}^{1}&=& -\sum_{\vert q-j \vert\le 3,\atop{q\in\NN}}\partial_{1}\Big(\Delta_{q}\Delta^{-1}\partial_{3}(\frac{\omega_\theta}{r})\big(2^{2q}\varphi_{1}(2^{q}\cdot)\ast\Delta_{j}\rho\big)\Big)\\
\textnormal{III}_{234}^{1}&=& \sum_{\vert q-j \vert\le 3,\atop{q\in\NN}}\partial_{1}\Big(\big(2^{2q}\varphi_{1}(2^{q}\cdot)\ast\Delta^{-1}\partial_{3}(\frac{\omega_\theta}{r})\big)\widetilde{\Delta}_{q}\Delta_{j}\rho\Big).
\end{eqnarray*}
We point out that by reproducing the same analysis as for $\textnormal{III}_{21}^{1},$ we get
\begin{equation}\label{ae}
\Vert\textnormal{III}_{23}^{1}\Vert_{L^{p}}\lesssim \Vert\frac{\omega_\theta}{r}\Vert_{L^{3,1}\cap L^{p}}\Big(\Vert x_{1}\rho \Vert_{B^{0}_{\infty,1}}+\Vert\rho\Vert_{B^{0}_{p,1}\cap L^{\infty}}\Big).
\end{equation}
\underline{Estimate of $\textnormal{III}_{24}^{1}.$} Using \eqref{ww}, we find 
\begin{eqnarray}\label{af}
\nonumber \Vert\textnormal{III}_{24}^{1}\Vert_{L^{p}}&\lesssim& \sum_{\vert q-j \vert\le 3,\atop{q\in\NN}} 2^{j}\Vert\Delta_{q}\partial_{13}\Delta^{-2}(\frac{\omega_\theta}{r})\Vert_{L^{p}}\Vert\widetilde{\Delta}_{q}\Delta_{j}\rho\Vert_{L^{\infty}}\\
\nonumber &\lesssim& \Vert\frac{\omega_\theta}{r}\Vert_{L^{p}}\sum_{\vert q-j \vert\le 3,\atop{q\in\NN}} 2^{j-q}2^{-q}\Vert\widetilde{\Delta}_{q}\rho\Vert_{L^{\infty}}\\
&\lesssim& \Vert\frac{\omega_\theta}{r}\Vert_{L^{p}}\Vert\rho\Vert_{L^{\infty}}.
\end{eqnarray}
Putting together \eqref{ac}, \eqref{ad}, \eqref{ae} and \eqref{af}, we find finally 
\begin{equation}\label{3i}
\Vert\textnormal{III}_{2}^{1}\Vert_{L^p}\lesssim\Vert\frac{\omega_\theta}{r}\Vert_{L^{3,1}\cap L^{p}}\Big(\Vert x_{1}\rho\Vert_{B^{0}_{\infty,1}}+\Vert\rho\Vert_{B^{0}_{p,1}\cap L^{\infty,}}\Big).
\end{equation}
The term $\textnormal{III}_{2}^{2}$ can be estimated as for the estimate of $\textnormal{III}_{2}^{1}.$ For the term  $\textnormal{III}_{2}^{3},$ we use \eqref{kn} and then by reproducing the same analysis, we get with the notation $x_{h}:=(x_1,x_2)$ the estimate
$$\Vert\textnormal{III}_{2}^{2}\Vert_{L^p}+\Vert\textnormal{III}_{2}^{3}\Vert_{L^p} \lesssim\Vert\frac{\omega_\theta}{r}\Vert_{L^{3,1}\cap L^{p}}\Big(\Vert x_{h}\rho\Vert_{B^{0}_{\infty,1}}+\Vert\rho\Vert_{B^{0}_{p,1}\cap L^{\infty,}}\Big).$$
Combining the above estimate with \eqref{3i}, yields
\begin{equation}\label{i3j}
\Vert\textnormal{III}_{2}\Vert_{L^p}\lesssim\Vert\frac{\omega_\theta}{r}\Vert_{L^{3,1}\cap L^{p}}\Big(\Vert x_{h}\rho\Vert_{B^{0}_{\infty,1}}+\Vert\rho\Vert_{B^{0}_{p,1}\cap L^{\infty,}}\Big).
\end{equation}
Now, from \eqref{ab} and \eqref{i3j} we get
\begin{eqnarray*}
\Vert\textnormal{III}\Vert_{L^{p}}\lesssim\Vert v\Vert_{L^2}\Vert\rho\Vert_{L^p}+\Vert\frac{\omega_\theta}{r}\Vert_{L^{3,1}\cap L^{p}}\Big(\Vert x_{h}\rho \Vert_{B^{0}_{\infty,1}}+\Vert\rho\Vert_{B^{0}_{p,1}\cap L^{\infty}}\Big).
\end{eqnarray*}
This ends the proof of the proposition.
\end{proof}

\section{Proof of Theorem \ref{theo1}}\label{A4}
To prove Theorem \ref{theo1}, we will restrict ourself to prove some a priori estimates and the inviscid limit. The proof of the uniqueness and the existence of the solutions are standard. 

\subsection{A priori estimates}
We establish in this subsection some global a priori estimates which we need in the proof of our main result. First we give some energy estimates and we shall prove an estimate of $\Vert\frac{v^r}{r}\Vert_{L^\infty}$ which is based on the estimation of our commutator in the previous section. Finally we will establish a control of the norm Lipschitz of the velocity. Let us start with the energy estimates.
\subsubsection{Energy estimates} 
We have the following estimates
\begin{prop}\label{prop a} Let $(v,\rho)$ be a smooth solution of \eqref{a}, then we have\\
$(a)$ For $(v^{0},\rho^{0})\in L^{2}\times L^{2},$ $t \in \RR_+$ and $\nu\ge 0$ we have
$$\Vert v(t)\Vert^{2}_{L^2}+2\nu\int_{0}^{t}\Vert\nabla v(\tau)\Vert^{2}_{L^2}d\tau\le C_0 (1+t^{2}),$$
where $C_0$ depends only on $\Vert v^{0} \Vert_{L^2}$ and $\Vert\rho^{0}\Vert_{L^2}$ but not on the viscosity $\nu.$\\ 
$(b)$ For $\rho^{0}\in L^{2}$ we have,
\begin{equation*}
\Vert\rho\Vert^{2}_{L_{t}^{\infty}L^{2}}+2\Vert\nabla\rho\Vert^{2}_{L_{t}^{2}L^{2}}=\Vert\rho^0 \Vert^{2}_{L^2}\;\;\textnormal{and}\;\;\Vert\rho(t)\Vert_{L^{\infty}}\le C t^{-\frac{3}{4}}\Vert\rho^{0}\Vert_{L^2}.
\end{equation*}
The constant $C$ does not depend on the viscosity.
\end{prop}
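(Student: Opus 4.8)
The plan is to obtain both statements from $L^p$ energy estimates, the crucial structural feature being that the divergence-free condition makes the convection operator $v\cdot\nabla$ energy-neutral. Consequently the density equation behaves, at the level of $L^p$ norms, exactly like the heat equation, and the velocity estimate follows once the buoyancy forcing $\rho\,e_z$ is controlled. I would therefore treat the density first and feed its bound into the velocity equation. The only genuinely non-routine point is the $L^\infty$ decay in $(b)$.

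I would begin with the $L^2$ identity for $\rho$. Testing the second equation of \eqref{a} against $\rho$ and integrating over $\RR^3$, the convection term contributes $\int_{\RR^3}(v\cdot\nabla\rho)\rho\,dx=\tfrac12\int_{\RR^3}v\cdot\nabla(\rho^2)\,dx=-\tfrac12\int_{\RR^3}(\textnormal{div}\,v)\rho^2\,dx=0$, while the diffusion term yields $-\int_{\RR^3}\Delta\rho\,\rho\,dx=\|\nabla\rho\|_{L^2}^2$. This gives $\tfrac12\frac{d}{dt}\|\rho\|_{L^2}^2+\|\nabla\rho\|_{L^2}^2=0$, and integration in time produces the announced equality of $(b)$; in particular $\|\rho(t)\|_{L^2}\le\|\rho^0\|_{L^2}$ for every $t$. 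For part $(a)$ I then take the $L^2$ inner product of the velocity equation with $v$: the pressure term drops after integration by parts since $\textnormal{div}\,v=0$, the convection term vanishes as above, the viscous term supplies $+\nu\|\nabla v\|_{L^2}^2$, and only the buoyancy term survives, with $\int_{\RR^3}\rho\,v^3\,dx\le\|\rho\|_{L^2}\|v\|_{L^2}\le\|\rho^0\|_{L^2}\|v\|_{L^2}$. Hence $\frac{d}{dt}\|v\|_{L^2}^2+2\nu\|\nabla v\|_{L^2}^2\le2\|\rho^0\|_{L^2}\|v\|_{L^2}$. Discarding the nonnegative dissipation and simplifying yields $\frac{d}{dt}\|v\|_{L^2}\le\|\rho^0\|_{L^2}$, so $\|v(t)\|_{L^2}\le\|v^0\|_{L^2}+t\|\rho^0\|_{L^2}$; squaring and reinserting this into the time-integrated inequality bounds both $\|v(t)\|_{L^2}^2$ and $2\nu\int_0^t\|\nabla v\|_{L^2}^2\,d\tau$ by $C_0(1+t^2)$, with $C_0$ depending only on $\|v^0\|_{L^2}$ and $\|\rho^0\|_{L^2}$ and, since $\nu$ multiplies a nonnegative term on the left, not on the viscosity.

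The main obstacle is the $L^\infty$ decay $\|\rho(t)\|_{L^\infty}\le Ct^{-3/4}\|\rho^0\|_{L^2}$, which is the ultracontractive smoothing of the transport-diffusion propagator. I would run an $L^p$ energy cascade: testing the density equation against $|\rho|^{p-2}\rho$ again annihilates the convection term by incompressibility and gives, for $p\ge2$, the dissipation identity $\tfrac1p\frac{d}{dt}\|\rho\|_{L^p}^p=-\tfrac{4(p-1)}{p^2}\big\|\nabla(|\rho|^{p/2})\big\|_{L^2}^2$. Combining this with the Gagliardo-Nirenberg (Nash-type) inequality in $\RR^3$ closes a differential inequality for $\|\rho\|_{L^p}$ and produces the family of decay estimates $\|\rho(t)\|_{L^p}\lesssim t^{-\frac32(\frac12-\frac1p)}\|\rho^0\|_{L^2}$ for $2\le p<\infty$, matching the $L^2\to L^p$ bound of $e^{t\Delta}$ (the exponent equalling $\tfrac34$ as $p\to\infty$). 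A Moser-type iteration starting from the $L^2$ energy level, with constants kept uniform in $p$, then upgrades this to the endpoint $\|\rho(t)\|_{L^\infty}\lesssim t^{-3/4}\|\rho^0\|_{L^2}$. Equivalently one may invoke that the skew-symmetry of $v\cdot\nabla$ forces the solution operator of the density equation to satisfy the same ultracontractive $L^2\to L^\infty$ estimate as the heat semigroup in dimension three. In every step only the fixed unit diffusion of the $\rho$-equation is used, so no dependence on $\nu$ is introduced.
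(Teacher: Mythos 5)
Your proof is correct, but it is worth pointing out that the paper does not actually prove this proposition: it remarks that the axisymmetric assumption is not needed and then simply cites \cite{hts09} for part $(a)$ and \cite{tf010} for part $(b)$. You have therefore supplied the argument that the paper delegates to the literature, and what you supply is the standard one. Your treatment of $(a)$ and of the $L^2$ identity in $(b)$ (incompressibility killing the convection terms, Cauchy--Schwarz on the buoyancy term, integration of $\frac{d}{dt}\Vert v\Vert_{L^2}\le\Vert\rho^0\Vert_{L^2}$, with the viscous dissipation kept on the left so that no $\nu$-dependence can enter) is exactly the computation carried out in \cite{hts09}. For the $L^\infty$ decay, your route --- the dissipation identity $\frac1p\frac{d}{dt}\Vert\rho\Vert_{L^p}^p=-\frac{4(p-1)}{p^2}\Vert\nabla(|\rho|^{p/2})\Vert_{L^2}^2$ (which is correct, as is the exponent $\frac32(\frac12-\frac1p)\to\frac34$), Nash's inequality, and an iteration to the endpoint --- is the standard ultracontractivity argument for diffusion with a divergence-free drift, which is the substance of the estimate proved in \cite{tf010}. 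Two remarks to make your write-up complete. First, the passage from finite $p$ to $p=\infty$ is the only delicate step and you only sketch it; the cleanest way to close it is to prove the $L^1\to L^2$ bound $\Vert\rho(t)\Vert_{L^2}\lesssim t^{-3/4}\Vert\rho^0\Vert_{L^1}$ from Nash's inequality and the $L^1$ maximum principle, and then obtain $L^2\to L^\infty$ by duality (the adjoint flow has divergence-free drift $-v$) together with the splitting of the solution operator at time $t/2$; this avoids tracking $p$-dependent constants through a Moser scheme. Second, the ``equality'' in $(b)$ should be read as the time-integrated energy identity $\Vert\rho(t)\Vert_{L^2}^2+2\int_0^t\Vert\nabla\rho(\tau)\Vert_{L^2}^2\,d\tau=\Vert\rho^0\Vert_{L^2}^2$, which is precisely what your computation yields; as literally written with the $L^\infty_t L^2$ norm it is a slight abuse of notation, in the paper as much as in your proposal.
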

Remark that the axisymmetric assumption on the velocity and the density is not needed in this Proposition.
The proof of the first estimate $(a)$ can be found in \cite{hts09}. For the proof of $(b)$ see \cite{tf010}.\\
We aim now to give some estimates for the horizontal moment $x_{h}\rho$ of the density that will be needed later, 
see Proposition 4.2 (1)-(3) in \cite{tf010} for a proof. 
\begin{prop}\label{prop ay1} Let $v$ be a smooth vector field with zero divergence and $\rho$ be a smooth solution of the second equation of \eqref{a}. Then we have\\
$(1)$ If $\rho^{0}\in L^{2}$ and $x_{h}\rho^{0}\in L^2,$ then there exists $C_{0}>0$ such that for every $t \in \RR_+,$ 
\begin{equation*}
\Vert x_{h}\rho\Vert_{L_{t}^{\infty}L^2}+\Vert x_{h}\rho\Vert_{L_{t}^{2}\dot{H}^{1}}\le C_{0}(1+t^{\frac{5}{4}}).
\end{equation*}
$(2)$ If $\rho^{0}\in L^{2}$ and $\vert x_{h}\vert^{2}\rho^{0}\in L^{2},$ then there exists $C_{0}>0$ such that for every $t \in \RR_+,$ 
$$\Big\Vert\vert x_{h}\vert^{2}\rho\Big\Vert_{L_{t}^{\infty}L^2}+\Big\Vert\vert x_{h}\vert^{2}\rho\Big\Vert_{L_{t}^{2}\dot{H}^{1}}\le C_{0}(1+t^{\frac{5}{2}}).$$
Where $C_0$ depend only on the norm of the initial data and not on the viscosity.
\end{prop}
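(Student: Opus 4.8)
The plan is to derive, for each weight, the transport--diffusion equation solved by the weighted density and to run an $L^2$ energy estimate on it, feeding in the a priori bounds of Proposition \ref{prop a}. For $(1)$, fix $i\in\{1,2\}$ and multiply $\partial_t\rho+v\cdot\nabla\rho-\Delta\rho=0$ by $x_i$. Using $\nabla x_i=e_i$ one has $x_i\,(v\cdot\nabla\rho)=v\cdot\nabla(x_i\rho)-v^i\rho$ and $x_i\Delta\rho=\Delta(x_i\rho)-2\partial_i\rho$, so that
\begin{equation*}
\partial_t(x_i\rho)+v\cdot\nabla(x_i\rho)-\Delta(x_i\rho)=v^i\rho-2\partial_i\rho .
\end{equation*}
Testing against $x_i\rho$, the convection term vanishes by $\textnormal{div}\,v=0$, the Laplacian yields $\Vert\nabla(x_i\rho)\Vert_{L^2}^2$, and an integration by parts gives the exact identity $-2\int(\partial_i\rho)(x_i\rho)\,dx=\Vert\rho\Vert_{L^2}^2$; hence
\begin{equation*}
\tfrac12\tfrac{d}{dt}\Vert x_i\rho\Vert_{L^2}^2+\Vert\nabla(x_i\rho)\Vert_{L^2}^2=\int_{\RR^3}v^i\rho\,(x_i\rho)\,dx+\Vert\rho\Vert_{L^2}^2 ,
\end{equation*}
the last term being $\le\Vert\rho^0\Vert_{L^2}^2$ by Proposition \ref{prop a}$(b)$.

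The delicate point is the quadratic term. The crude bound $\int v^i\rho\,(x_i\rho)\le\Vert\rho\Vert_{L^\infty}\Vert v\Vert_{L^2}\Vert x_i\rho\Vert_{L^2}$ is unusable: the smoothing estimate $\Vert\rho(t)\Vert_{L^\infty}\lesssim t^{-3/4}\Vert\rho^0\Vert_{L^2}$ is not square integrable at $t=0$, and a Gronwall step would only return exponential growth, whereas the target is polynomial. I would instead interpolate, $\int v^i\rho\,(x_i\rho)\le\Vert v\Vert_{L^2}\Vert x_i\rho\Vert_{L^4}\Vert\rho\Vert_{L^4}$, and apply Gagliardo--Nirenberg, $\Vert x_i\rho\Vert_{L^4}\lesssim\Vert x_i\rho\Vert_{L^2}^{1/4}\Vert\nabla(x_i\rho)\Vert_{L^2}^{3/4}$ and $\Vert\rho\Vert_{L^4}\lesssim\Vert\rho^0\Vert_{L^2}^{1/4}\Vert\nabla\rho\Vert_{L^2}^{3/4}$. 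A Young inequality then absorbs $\tfrac12\Vert\nabla(x_i\rho)\Vert_{L^2}^2$ into the diffusion term on the left, leaving, with $u:=\Vert x_i\rho\Vert_{L^2}^2$, the \emph{sublinear} differential inequality
\begin{equation*}
u'(t)\lesssim\Vert v\Vert_{L^2}^{8/5}\,\Vert\rho^0\Vert_{L^2}^{2/5}\,\Vert\nabla\rho\Vert_{L^2}^{6/5}\,u^{1/5}+\Vert\rho^0\Vert_{L^2}^2 .
\end{equation*}
Because the power of $u$ is $1/5<1$, dividing by $(u+1)^{1/5}$ controls $(u+1)^{4/5}$ by $\int_0^t\Vert v\Vert_{L^2}^{8/5}\Vert\nabla\rho\Vert_{L^2}^{6/5}\,ds+t$, so no exponential appears.

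It remains to estimate this integral through Proposition \ref{prop a}: the energy bound gives $\Vert v(s)\Vert_{L^2}\lesssim1+s$, while H\"older in time together with $\Vert\nabla\rho\Vert_{L^2_tL^2}^2\le\tfrac12\Vert\rho^0\Vert_{L^2}^2$ yields $\int_0^t\Vert\nabla\rho\Vert_{L^2}^{6/5}\,ds\lesssim\Vert\rho^0\Vert_{L^2}^{6/5}\,t^{2/5}$. This makes the bracket of size $(1+t)^{8/5}t^{2/5}\sim t^2$, whose $5/4$ power gives $u(t)\lesssim1+t^{5/2}$, i.e. $\Vert x_i\rho\Vert_{L^\infty_tL^2}\lesssim1+t^{5/4}$; integrating the retained dissipation in time then yields the same bound for $\Vert x_i\rho\Vert_{L^2_t\dot H^1}$, proving $(1)$. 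For $(2)$ the identical scheme applies to $w:=|x_h|^2\rho$, which solves $\partial_tw+v\cdot\nabla w-\Delta w=2\rho\,(x_h\cdot v_h)-4\,x_h\cdot\nabla_h\rho-4\rho$ with $v_h=(v^1,v^2)$; testing against $w$, the two linear sources reduce after integration by parts to multiples of $\Vert x_h\rho\Vert_{L^2}^2$, already controlled by $(1)$, while the single nonlinear source regroups as $2\int w\,[(x_h\rho)\cdot v_h]\,dx\le2\Vert v\Vert_{L^2}\Vert w\Vert_{L^4}\Vert x_h\rho\Vert_{L^4}$ and is treated exactly as above, the factor $\Vert x_h\rho\Vert_{L^4}$ now being furnished by $(1)$. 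The main obstacle throughout is reconciling the $t=0$ singularity of the heat smoothing with a purely polynomial-in-time bound, which the interpolation-and-absorption step resolves; the exponent for $(2)$ comes out as $1+t^{5/2}$ precisely because the coefficients it inherits from $(1)$ already grow like $t^{5/4}$.
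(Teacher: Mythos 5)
Your proof is correct, and the first thing to note is that the paper itself contains no proof of this proposition: it is quoted from Hmidi--Rousset \cite{tf010} (Proposition 4.2 there), so the comparison must be made with that argument. Your computations check out in detail: the weighted equations for $x_i\rho$ and $|x_h|^2\rho$, the exact identity $-2\int(\partial_i\rho)(x_i\rho)\,dx=\Vert\rho\Vert_{L^2}^2$, the reduction of the two linear sources in part $(2)$ to $+4\Vert x_h\rho\Vert_{L^2}^2$ (namely $+8\Vert x_h\rho\Vert_{L^2}^2-4\Vert x_h\rho\Vert_{L^2}^2$), the Gagliardo--Nirenberg/Young absorption into the dissipation, and the exponent bookkeeping ($(1+t)^{8/5}t^{2/5}$ giving $(u+1)^{4/5}\lesssim 1+t^{2}$, hence $\Vert x_h\rho\Vert_{L^\infty_tL^2}\lesssim 1+t^{5/4}$, and similarly $1+t^{5/2}$ in part $(2)$); everything is uniform in $\nu$ since the density equation has unit diffusivity and Proposition \ref{prop a} is $\nu$-independent. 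However, your stated reason for rejecting the ``crude'' bound is a miscalculation, and this matters because the short proof behind the cited result runs precisely on that bound. One needs $\Vert\rho(t)\Vert_{L^\infty}\lesssim t^{-3/4}\Vert\rho^0\Vert_{L^2}$ only to be integrable near $t=0$ (it is; $3/4<1$), not square integrable, and no Gronwall step is required: since $\int v^i\rho\,(x_i\rho)\,dx\le\Vert v\Vert_{L^2}\Vert\rho\Vert_{L^\infty}\Vert x_i\rho\Vert_{L^2}$ is \emph{linear} in $\Vert x_i\rho\Vert_{L^2}$, dividing the inequality for $u=\Vert x_i\rho\Vert_{L^2}^2$ by $\sqrt{u+1}$ --- the very device you use with the power $1/5$ --- yields $\frac{d}{dt}\sqrt{u+1}\lesssim\Vert v\Vert_{L^2}\Vert\rho\Vert_{L^\infty}+\Vert\rho^0\Vert_{L^2}^2$, and then $\int_0^t(1+s)s^{-3/4}\,ds\lesssim t^{1/4}+t^{5/4}$ is exactly where the exponent $5/4$ of the statement comes from; part $(2)$ follows in the same way. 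So the reference's route is shorter, requiring neither interpolation nor absorption; what your variant buys is that the quadratic source is handled purely by energy quantities ($\Vert v\Vert_{L^2}$, $\Vert\nabla\rho\Vert_{L^2_tL^2}$) without invoking the $L^\infty$ heat-smoothing decay of Proposition \ref{prop a}-(b), at the cost of a longer computation.
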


\subsubsection{Strong estimates}
We will prove in the first step a bound for $\Vert\frac{\omega}{r}(t)\Vert_{L^{3,1}}$ which is the important quantity to get the global existence of smooth solutions. It allows us to bound for all times the vorticity in $L^\infty$ space and then to bound the Lipschitz norm of the velocity $\Vert\nabla v(t)\Vert_{L^\infty}.$
\begin{prop}\label{prop b} Let $v^0$ be a smooth axisymmetric vector field with zero divergence such that $v^0\in L^2$, its vorticity such that $\frac{\omega^0}{r}\in L^{2}\cap L^{\bar{p}}$ with $3<\bar{p}<6$ and let $\rho^{0}\in B_{2,1}^{0}\cap B_{\bar{p},1}^{0}\cap L^{m}$ with $m>6$ be an axisymmetric function, such that $\vert x_{h}\vert^{2}\rho^{0}\in L^{2}.$ Then we have for every $t \in \RR_+$ 
$$\Vert\frac{\omega}{r}(t)\Vert_{L^{3,1}}+\Vert \frac{v^r}{r}(t)\Vert_{L^{\infty}}\le\Phi_{2}(t).$$
We recall that $\Phi_{2}(t)=C_{0}e^{\exp\{C_{0}t^{\frac{19}{6}}\}}$ and the constant $C_0$ depends only on the norm of the initial data but not on the viscosity $\nu.$
\end{prop}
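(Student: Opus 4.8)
The plan is to first reduce the $L^{3,1}$ bound for $\zeta:=\frac{\omega_\theta}{r}$ to a bound in $L^2\cap L^{\bar p}$ via the interpolation inequality \eqref{ah}, and then to control the latter by running the transport--diffusion equation \eqref{b1} satisfied by $\zeta$. The diffusion operator $\nu(\Delta+\frac2r\partial_r)$ appearing there is, up to the factor $\nu$, a generalized (five-dimensional) radial Laplacian, and in particular it generates an $L^p$-contraction semigroup; hence the maximum principle behind Lemma \ref{kj1} applies verbatim and uniformly in $\nu\ge 0$, yielding for $p\in\{2,\bar p\}$
$$\|\zeta(t)\|_{L^p}\le \|\zeta^0\|_{L^p}+\int_0^t\Big\|\tfrac{\partial_r\rho}{r}(\tau)\Big\|_{L^p}\,d\tau.$$
The whole difficulty is thereby transferred to the forcing term $\frac{\partial_r\rho}{r}$, and crucially no power of $\nu$ has survived.

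Next I would use the pointwise domination $\|\frac{\partial_r}{r}\rho\|_{L^p}\lesssim\|\nabla^2\rho\|_{L^p}$ valid on axisymmetric functions, so that it remains to bound $\|\nabla^2\rho\|_{L^1_tL^p}$ for $p\in\{2,\bar p\}$. Since the density solves a heat equation with unit (\emph{$\nu$-independent}) diffusivity, I would invoke the maximal smoothing effect of Proposition \ref{pro xc} with $\kappa=1,\ g=0$: summing the resulting estimate against the weight $2^{2j}$ over $j\ge-1$ controls $\|\nabla^2\rho\|_{L^1_tL^p}\lesssim\|\rho\|_{L^1_tB^2_{p,1}}$ by the initial norm $\|\rho^0\|_{B^0_{p,1}}$ plus the commutator sum $\int_0^t\sum_j\|[\Delta_j,v\cdot\nabla]\rho\|_{L^p}\,d\tau$, while summing against the weight $2^{0j}$ controls $\|\rho\|_{L^\infty_tB^0_{p,1}}$ by the same quantities. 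The commutator sum is exactly the object estimated in Proposition \ref{prop jh}, which bounds it by $\|v\|_{L^2}\|\rho\|_{L^p}+\|\zeta\|_{L^{3,1}\cap L^p}\big(\|x_h\rho\|_{B^0_{\infty,1}}+\|\rho\|_{B^0_{p,1}\cap L^\infty}\big)$.

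At this point I would feed in the auxiliary a priori bounds. The energy estimates of Proposition \ref{prop a} give $\|v\|_{L^2}\lesssim C_0(1+t)$ together with the control of $\|\rho\|_{L^p\cap L^\infty}$ (including the $t^{-3/4}$ decay in $L^\infty$), and the moment estimates of Proposition \ref{prop ay1} feed the inequality of \cite{tf010},
$$\|x_h\rho\|_{L^1_tB^0_{\infty,1}}\le C_0(t)\Big(1+\int_0^t h(\tau)\log\big(2+\|\zeta\|_{L^\infty_\tau L^{3,1}}\big)\,d\tau\Big),$$
with $C_0(t)$ continuous and $h\in L^1_{\mathrm{loc}}$. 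Writing $Z(t):=\|\zeta\|_{L^\infty_t(L^2\cap L^{\bar p})}$ and collecting everything, the chain closes into a single integral inequality in which $Z(t)$ is bounded by a polynomial-in-$t$ data term plus $\int_0^t\phi(\tau)\,Z(\tau)\log\big(2+Z(\tau)\big)\,d\tau$: the product $\|\zeta\|_{L^{3,1}\cap L^p}\,\|x_h\rho\|_{B^0_{\infty,1}}$ produces exactly the $Z\log Z$ structure, while the remaining terms (notably $\|\rho\|_{B^0_{p,1}}$, absorbed via the smoothing estimate of the previous step) contribute only linearly. An Osgood / logarithmic Gronwall argument then yields a double-exponential bound, and careful bookkeeping of the polynomial time-factors coming from the energy and moment estimates produces the stated profile $\Phi_2(t)=C_0e^{\exp\{C_0t^{19/6}\}}$; via \eqref{ah} this controls $\|\zeta\|_{L^\infty_tL^{3,1}}$.

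Finally, the bound on $\|\frac{v^r}{r}\|_{L^\infty}$ follows from representing $\frac{v^r}{r}$ as a zero-order Calder\'on--Zygmund-type operator applied to $\zeta$ (behaving like $\nabla\Delta^{-1}$ on the axisymmetric class) and invoking \eqref{12}, which gives $\|\frac{v^r}{r}\|_{L^\infty}\lesssim\|\zeta\|_{L^{3,1}}\le\Phi_2(t)$. The main obstacle is closing the nonlinear loop: the commutator bound couples the growth of $\zeta$ to $\|x_h\rho\|_{B^0_{\infty,1}}$, which itself grows only logarithmically in $\|\zeta\|_{L^{3,1}}$, so the estimate is self-referential and must be closed by an Osgood-type argument rather than a linear Gronwall. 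Keeping every constant independent of $\nu$ is the other delicate point, and it hinges precisely on using the viscous diffusion in \eqref{b1} only through the maximum principle and on the $\nu$-independence of the density diffusivity that drives the smoothing effect.
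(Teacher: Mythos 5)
Your proposal reproduces the paper's proof in all of its substantive steps: the reduction of the $L^{3,1}$ bound to $L^{2}\cap L^{\bar p}$ via \eqref{ah}, the maximum principle (Lemma \ref{kj1}) for the equation \eqref{b1} of $\zeta=\frac{\omega_\theta}{r}$, uniformly in $\nu$ since the viscous term enters only through its sign, the domination $\Vert\frac{\partial_r}{r}\rho\Vert_{L^p}\lesssim\Vert\nabla^{2}\rho\Vert_{L^p}$ of Lemma \ref{lem1}, the maximal smoothing effect of Proposition \ref{pro xc} combined with the commutator estimate of Proposition \ref{prop jh}, the energy and moment estimates of Propositions \ref{prop a} and \ref{prop ay1}, the logarithmic inequality of \cite{tf010} for $\Vert x_h\rho\Vert_{L^1_tB^0_{\infty,1}}$, and the passage to $\Vert\frac{v^r}{r}\Vert_{L^\infty}$ via Lemma \ref{lem h} and \eqref{12}. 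Up to the closure of the self-referential loop, this is exactly the paper's argument.

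The closure itself, however, does not go through as you state it. You claim the chain collapses into the single integral inequality
$$Z(t)\le P(t)+\int_0^t\phi(\tau)\,Z(\tau)\log\big(2+Z(\tau)\big)\,d\tau,\qquad Z(t):=\Vert\zeta\Vert_{L^\infty_t(L^{2}\cap L^{\bar p})},$$
to be handled by an Osgood argument. To derive this you would need a \emph{pointwise-in-time} bound $\Vert x_h\rho(\tau)\Vert_{B^0_{\infty,1}}\lesssim\phi(\tau)\log\big(2+Z(\tau)\big)$, but the inequality imported from \cite{tf010} controls only the time integral $\Vert x_h\rho\Vert_{L^1_tB^0_{\infty,1}}$ (see \eqref{logzeta}); no pointwise version is provided, and its proof goes through time-integrated heat smoothing, whence the integrable singularity $\tau^{-3/4}$ in the integrand. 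If you try to use the integrated bound together with the monotonicity of $Z$, you only reach $Z(t)\le P(t)+Z(t)\big(A(t)+B\int_0^t h(\tau)\log(2+Z(\tau))\,d\tau\big)$, which cannot be closed since $A(t)$ is not small. The paper's resolution is to order the arguments differently: first apply the \emph{linear} Gronwall lemma to the inequality \eqref{as0} after inserting the commutator estimates — this step requires only the $L^1_t$ norm of the Gronwall coefficient — which yields \eqref{conss}, namely $Z(t)\le C_0e^{C_0t^{2}}\exp\big(C\Vert x_h\rho\Vert_{L^1_tB^0_{\infty,1}}\big)$; then insert \eqref{logzeta}, take logarithms, and apply a \emph{second} linear Gronwall inequality to the scalar quantity $\log\big(2+Z(t)\big)$, giving $\log(2+Z)\le\Phi_1(t)$ and hence $Z\le\Phi_2(t)$ upon exponentiating. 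So the double exponential arises from two nested linear Gronwall arguments rather than from an Osgood lemma; your scheme becomes correct once the closure is reorganized in this way, and the rest of your proof then coincides with the paper's.
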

\begin{Rema}\label{rem12}
We note that for $\rho^{0}\in H^{s-2}$ with $s>\frac{5}{2},$ there exists $\bar{p}>3$ such that $\rho^{0}\in B_{2,1}^{0}\cap B_{\bar{p},1}^{0}.$
\end{Rema}
\begin{proof} We start with using the following result, proved in \cite{ahs08}
\begin{eqnarray*}
\vert v^{r}/r \vert\lesssim \frac{1}{\vert\cdot\vert^{2}}\ast\vert\frac{\omega_\theta}{r}\vert.
\end{eqnarray*}
Using Lemma \ref{lem h} and \eqref{ah}, we have for $\bar{p}>3$
\begin{eqnarray}\label{as}
\nonumber \Vert v^{r}/r \Vert_{L^\infty}&\lesssim& \Vert\frac{1}{\vert\cdot\vert^{2}}\Vert_{L^{\frac{3}{2},\infty}}\Vert\frac{\omega_\theta}{r}\Vert_{L^{3,1}}\\
&\lesssim& \Vert\frac{\omega_\theta}{r}\Vert_{L^{3,1}}\lesssim\Vert\frac{\omega_\theta}{r}\Vert_{L^{2}\cap L^{\bar{p}}}.
\end{eqnarray}

It remains then to estimate $\Vert\frac{\omega_\theta}{r}\Vert_{L^{2}\cap L^{\bar{p}}}.$ For this purpose we recall that the function $\zeta:=\frac{\omega_\theta}{r}$ satisfies the equation 
$$\partial_{t}\zeta+v\cdot\nabla\zeta-\nu(\Delta+\frac{2}{r}\partial_r)\zeta=-\frac{\partial_{r}\rho}{r}.$$
By performing $L^p$ estimates, we get
\begin{equation}\label{as0}
\Vert\zeta(t)\Vert_{L^{2}\cap L^{\bar{p}}}\le \Vert\zeta^{0}\Vert_{L^{2}\cap L^{\bar{p}}}+\int_{0}^{t}\Vert\frac{\partial_{r}\rho}{r}(\tau)\Vert_{L^{2}\cap L^{\bar{p}}}d\tau.
\end{equation}
At this stage we need the following lemma which we refer to \cite{tf010}.
\begin{lem}\label{lem1}
For every axisymmetric smooth scalar function $u$, we have
$$\frac{\partial_{r}}{r}u=\sum_{i,j=1}^{2}b_{ij}(x)\partial_{ij}u,$$
where the functions $b_{ij}$ are bounded. 
\end{lem}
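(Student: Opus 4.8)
The plan is to reduce the stated identity to a purely algebraic computation in the horizontal variables $x_h=(x_1,x_2)$, treating $z$ as a passive parameter throughout. First I would record the chain rule for an axisymmetric $u=u(r,z)$: since $r=(x_1^2+x_2^2)^{1/2}$ one has $\partial_i r=x_i/r$ and $\partial_i(x_k/r)=\delta_{ik}/r-x_ix_k/r^3$ for $i,k\in\{1,2\}$, hence $\partial_iu=\frac{x_i}{r}\partial_ru$. Differentiating once more (and using that $\partial_ru$ is again axisymmetric) I would obtain the exact expressions
\begin{align*}
\partial_{11}u&=\frac{x_1^2}{r^2}\,\partial_{rr}u+\frac{x_2^2}{r^2}\,\frac{\partial_ru}{r},\\
\partial_{22}u&=\frac{x_2^2}{r^2}\,\partial_{rr}u+\frac{x_1^2}{r^2}\,\frac{\partial_ru}{r},\\
\partial_{12}u&=\frac{x_1x_2}{r^2}\,\partial_{rr}u-\frac{x_1x_2}{r^2}\,\frac{\partial_ru}{r}.
\end{align*}
Thus the three horizontal second derivatives are linear combinations of only two independent quantities, $\partial_{rr}u$ and the target $\frac{\partial_ru}{r}$, with coefficients built from the bounded functions $c:=x_1^2/r^2$, $s:=x_2^2/r^2$ and $m:=x_1x_2/r^2$.

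The second step is to invert this (overdetermined) relation so as to isolate $\frac{\partial_ru}{r}$ with \emph{bounded} coefficients. The hard part is precisely here: the naive inversion using only $\partial_{11}u$ and $\partial_{22}u$ costs a factor $(c-s)^{-1}$, which blows up on the diagonal $\{|x_1|=|x_2|\}$, while dividing through the mixed derivative costs $m^{-1}$, which blows up on the axes $\{x_1x_2=0\}$. The resolution is to exploit the two algebraic identities $c+s=1$ and $cs=m^2$ (the latter because $sc=x_1^2x_2^2/r^4=(x_1x_2/r^2)^2$), which let one cancel all singular contributions at once. Guided by the requirement that the resulting coefficient of $\frac{\partial_ru}{r}$ vanish on both axes, I would propose the explicit choice
$$
b_{11}=\frac{x_2^2}{r^2},\qquad b_{22}=\frac{x_1^2}{r^2},\qquad b_{12}=b_{21}=-\frac{x_1x_2}{r^2},
$$
each of which is bounded by $1$ since $|x_i|\le r$.

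The last step is verification. Substituting the three displayed formulas into $\sum_{i,j=1}^{2}b_{ij}\partial_{ij}u$, the coefficient of $\partial_{rr}u$ is $cs+cs-2m^2=0$ by $cs=m^2$, while the coefficient of $\frac{\partial_ru}{r}$ collapses to $s^2+c^2+2m^2=(c+s)^2=1$; equivalently, in Cartesian form it equals $\frac{x_1^4+2x_1^2x_2^2+x_2^4}{r^4}=\frac{(x_1^2+x_2^2)^2}{r^4}=1$. Hence $\sum_{i,j=1}^{2}b_{ij}\partial_{ij}u=\frac{\partial_r}{r}u$ for every smooth axisymmetric $u$, which is exactly the claimed identity with bounded $b_{ij}$. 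I would close by noting that the $b_{ij}$ are smooth away from the axis $\{r=0\}$ and stay bounded (by $1$) everywhere, so no difficulty arises on the axis once the combination is fixed; the only genuine obstacle, as indicated, is the selection of this particular bounded left-inverse among the one-parameter family produced by the overdetermined system.
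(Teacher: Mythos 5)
Your proof is correct and is essentially the argument the paper relies on: the paper proves nothing inline but defers Lemma \ref{lem1} to \cite{tf010}, where the identity is obtained by exactly this chain-rule computation, with the same bounded coefficients $b_{11}=\frac{x_2^2}{r^2}$, $b_{22}=\frac{x_1^2}{r^2}$, $b_{12}=b_{21}=-\frac{x_1x_2}{r^2}$, and your verification (the $\partial_{rr}u$ coefficient vanishing via $cs=m^2$, the $\frac{\partial_r u}{r}$ coefficient summing to $(c+s)^2=1$, $|b_{ij}|\le 1$) is complete. Your ``inversion of an overdetermined system'' framing is a slight detour---the right combination appears at once from $\frac{\partial_r u}{r}=\Delta_h u-\partial_{rr}u$ with $\Delta_h=\partial_{11}+\partial_{22}$ and $\partial_{rr}u=\sum_{i,j=1}^{2}\frac{x_ix_j}{r^2}\partial_{ij}u$, i.e. $b_{ij}=\delta_{ij}-\frac{x_ix_j}{r^2}$---but this only shortens the discovery step, not the proof itself.
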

Consequently for every $1\le p \le\infty,$ we obtain 
$$\Vert\frac{\partial_{r}}{r}u \Vert_{L^p}\lesssim\Vert\nabla^{2}u \Vert_{L^p}.$$
By using this Lemma, Bernstein inequality, we obtain
\begin{eqnarray*}
\Vert\frac{\partial_{r}\rho}{r}\Vert_{L_{t}^{1}(L^{2}\cap L^{\bar{p}})}
&\lesssim& \Vert\nabla^{2}\rho\Vert_{L_{t}^{1}(L^{2}\cap L^{\bar{p}})}\\
&\lesssim& \sum_{j\ge-1}\Vert\Delta_{j}\nabla^{2}\rho\Vert_{L_{t}^{1}(L^{2}\cap L^{\bar{p}})}\\ 
&\lesssim& \sum_{j\ge-1}2^{2j}\Vert\Delta_{j}\rho\Vert_{L_{t}^{1}(L^{2}\cap L^{\bar{p}})}\\
&\lesssim& \int_{0}^{t}\Vert\Delta_{-1}\rho(\tau)\Vert_{L^{2}\cap L^{\bar{p}}}d\tau+\sum_{j\ge 0}2^{2j}\int_{0}^{t}\Vert\Delta_{j}\rho(\tau)\Vert_{L^{2}\cap L^{\bar{p}}}d\tau.
\end{eqnarray*}
Therefore by using Lemme \ref{kj1}, Proposition \ref{pro xc} and Proposition \ref{prop jh}, we obtain
\begin{eqnarray*}
\Vert\frac{\partial_{r}\rho}{r}\Vert_{L_{t}^{1}(L^{2}\cap L^{\bar{p}})}&\lesssim& t\Vert\rho^{0}\Vert_{L^{2}\cap L^{\bar{p}}}+\sum_{j\ge 0}\Vert\Delta_{j}\rho^{0}\Vert_{L^{2}\cap L^{\bar{p}}}+\sum_{j\ge 0} \Vert[\Delta_{j},v\cdot\nabla]\rho\Vert_{L_{t}^{1}(L^{2}\cap L^{\bar{p}})}\\
&\lesssim& \Vert\rho^{0}\Vert_{B_{2,1}^{0}\cap B_{\bar{p},1}^{0}}(1+t)+\int_{0}^{t}\Vert v(\tau) \Vert_{L^2}\Vert\rho(\tau)\Vert_{L^{2}\cap L^{\bar{p}}}d\tau\\
&+& \int_{0}^{t}\Vert\zeta(\tau)\Vert_{L^{2}\cap L^{\bar{p}}}\big(\Vert x_{h}\rho(\tau)\Vert_{B^{0}_{\infty,1}}+\Vert\rho(\tau)\Vert_{B^{0}_{2,1}\cap B^{0}_{\bar{p},1}\cap L^{\infty}}\big)d\tau\\
&\lesssim& \Vert\rho^{0}\Vert_{B_{2,1}^{0}\cap B_{\bar{p},1}^{0}}(1+t)+\Vert\rho^{0}\Vert_{L^{2}\cap L^{\bar{p}}}\int_{0}^{t}\Vert v(\tau)\Vert_{L^2}d\tau\\
&+& \int_{0}^{t}\Vert\zeta(\tau)\Vert_{L^{2}\cap L^{\bar{p}}}\big(\Vert x_{h}\rho(\tau)\Vert_{B^{0}_{\infty,1}}+\Vert\rho(\tau)\Vert_{B^{0}_{2,1}\cap B^{0}_{\bar{p},1}\cap L^{\infty}}\big)d\tau.
\end{eqnarray*}
Plugging this last estimate in \eqref{as0} and using Proposition \ref{prop a}, we get
\begin{eqnarray*}
\Vert\zeta(t)\Vert_{L^{2}\cap L^{\bar{p}}}&\lesssim& \Vert\zeta^{0}\Vert_{L^{2}\cap L^{\bar{p}}}+\Vert\rho^{0}\Vert_{B_{2,1}^{0}\cap B_{\bar{p},1}^{0}}(1+t)+ \Vert\rho^{0}\Vert_{L^{2}\cap L^{\bar{p}}}C_{0}t(1+t)\\
&+& \int_{0}^{t}\Vert\zeta(\tau)\Vert_{L^{2}\cap L^{\bar{p}}}\big(\Vert x_{h}\rho(\tau)\Vert_{B^{0}_{\infty,1}}+\Vert\rho(\tau)\Vert_{B^{0}_{2,1}\cap B^{0}_{\bar{p},1}\cap L^{\infty}}\big)d\tau.
\end{eqnarray*}
Gronwall's inequality gives
\begin{equation}\label{as00} 
\Vert\zeta(t)\Vert_{L^{2}\cap L^{\bar{p}}}\le C_{0}(1+t^2)\exp\Big\{C\Vert x_{h}\rho\Vert_{L_{t}^{1}B^{0}_{\infty,1}}+C\Vert\rho\Vert_{L_{t}^{1}\big(B^{0}_{2,1}\cap B^{0}_{\bar{p},1}\cap L^{\infty}\big)}\Big\}.
\end{equation} 
To estimate the term $\Vert\rho\Vert_{L_{t}^{1}B^{0}_{2,1}},$ we use the embedding $B^{1/2}_{2,1}\hookrightarrow B^{0}_{2,1},$ interpolation estimate, H\"older inequality and Proposition \ref{prop a},
$$\Vert\rho\Vert_{L_{t}^{1}B^{0}_{2,1}}\lesssim\Vert\rho\Vert_{L_{t}^{1}B^{\frac{1}{2}}_{2,1}}\lesssim\Vert\rho\Vert^{\frac{1}{2}}_{L_{t}^{\infty}L^{2}}t^{\frac{3}{4}}\Vert\nabla\rho\Vert^{\frac{1}{2}}_{L_{t}^{2}L^{2}}\lesssim t^{\frac{3}{4}}\Vert\rho^{0}\Vert_{L^2}.$$
The term $\Vert\rho\Vert_{L_{t}^{1}L^{\infty}},$ can be estimate by using the second estimate of Proposition \ref{prop a}-(b) and integrating in time we get
$$\Vert\rho\Vert_{L_{t}^{1}L^{\infty}}\lesssim t^{\frac{1}{4}}\Vert\rho^{0}\Vert_{L^2}$$
and for $\Vert\rho\Vert_{L_{t}^{1}B_{\bar{p},1}^{0}},$ we have by definition of Besov spaces and for $2<\bar{p}<6$ and Proposition \ref{prop a}-(b) that
\begin{eqnarray*}
\Vert\rho\Vert_{L_{t}^{1}B_{\bar{p},1}^{0}}= \sum_{q\ge-1}\Vert\Delta_{q}\rho\Vert_{L_{t}^{1}L^{\bar{p}}}&\lesssim& \sum_{q\ge-1}2^{3q(\frac{1}{2}-\frac{1}{\bar{p}})}\Vert\Delta_{q}\rho\Vert_{L_{t}^{1}L^{2}}\\
&\lesssim& \sum_{q\ge-1}2^{q(\frac{1}{2}-\frac{3}{\bar{p}})}2^{q}\Vert\Delta_{q}\rho\Vert_{L_{t}^{1}L^{2}}\\
&\lesssim& \Vert\rho\Vert_{L_{t}^{1}H^{1}}\\
&\lesssim& t^{\frac{1}{2}}\Vert\rho\Vert_{L_{t}^{2}H^{1}}\\
&\lesssim& t^{\frac{1}{2}}\Vert\rho^{0}\Vert_{L^{2}}.
\end{eqnarray*}
Consequently we obtain in view of \eqref{as00},
\begin{eqnarray}\label{conss}
\nonumber \Vert\zeta(t)\Vert_{L^{2}\cap L^{\bar{p}}}&\le& C_{0}(1+t^{2})e^{C(t^{\frac{3}{4}}+t^{\frac{1}{2}}+ t^{\frac{1}{4}})\Vert\rho^{0}\Vert_{L^{2}}}e^{C\Vert x_{h}\rho\Vert_{L_{t}^{1}B^{0}_{\infty,1}}}\\
&\le& C_{0}e^{C_{0}t^{2}} e^{C\Vert x_{h}\rho\Vert_{L_{t}^{1}B^{0}_{\infty,1}}}.
\end{eqnarray}
To estimate the term $\Vert x_{h}\rho\Vert_{L_{t}^{1}B^{0}_{\infty,1}},$ we use the following inequality proved   for $\rho^{0}\in L^{2}\cap L^{m}$ with $m>6$ and $\vert x_{h}\vert^{2}\rho^{0}\in L^{2}$ (see \cite{tf010} for a proof),
$$\Vert x_{h}\rho\Vert_{L_{t}^{1}B^{0}_{\infty,1}}\le C_{0}(1+t^{\frac{19}{6}})+C_{0}\int_{0}^{t}(\tau^{\frac{13}{6}}+\tau^{-\frac{3}{4}})\log\big(2+\Vert\zeta\Vert_{L_{\tau}^{\infty}L^{3,1}}\big)d\tau.$$
Hence, for $\bar{p}>3$ we get
\begin{equation}\label{logzeta}
\Vert x_{h}\rho\Vert_{L_{t}^{1}B^{0}_{\infty,1}}\le C_{0}(1+t^{\frac{19}{6}})+C_{0}\int_{0}^{t}(\tau^{\frac{13}{6}}+\tau^{-\frac{3}{4}})\log\big(2+\Vert\zeta\Vert_{L_{\tau}^{\infty}(L^{2}\cap L^{\bar{p}}}\big)d\tau.
\end{equation}
Putting together \eqref{conss} and \eqref{logzeta}, we find that
$$\log\big(2+\Vert\zeta\Vert_{L_{\tau}^{\infty}(L^{2}\cap L^{\bar{p}})}\big)\le C_{0}(1+t^{\frac{19}{6}})+C_{0}\int_{0}^{t}(\tau^{\frac{13}{6}}+\tau^{-\frac{3}{4}})\log\big(2+\Vert\zeta\Vert_{L_{\tau}^{\infty}(L^{2}\cap L^{\bar{p}})}\big)d\tau.$$
Gronwall's inequality gives
$$\log\big(2+\Vert\zeta\Vert_{L_{\tau}^{\infty}(L^{2}\cap L^{\bar{p}})}\big)\le C_{0}(1+t^{\frac{19}{6}})e^{C_{0}(t^{\frac{19}{6}}+t^{\frac{1}{4}})}\le\Phi_{1}(t).$$
Therefore we get by using again \eqref{logzeta} that
$$\Vert x_{h}\rho\Vert_{L_{t}^{1}B^{0}_{\infty,1}}\le\Phi_{1}(t).$$
This yields in \eqref{conss} that 
$$\Vert\zeta(t)\Vert_{L^{2}\cap L^{\bar{p}}}\le\Phi_{2}(t).$$
Hence, it follows from \eqref{ah} that,
\begin{eqnarray*}
\Vert\zeta(t)\Vert_{L^{3,1}}\le\Phi_{2}(t).
\end{eqnarray*}
Then we have with $\omega=\omega_{\theta}e_\theta$ and by using \eqref{claspro},
$$\Vert\frac{\omega}{r}(t)\Vert_{L^{3,1}}\le\Vert\zeta(t)\Vert_{L^{3,1}}\le\Phi_{2}(t).$$
Thanks to \eqref{as}, we obtain 
$$\Vert\frac{v^r}{r}(t)\Vert_{L^\infty}\le\Phi_{2}(t).$$ 
This ends the proof of the proposition.
\end{proof}
Now, we will use the above estimates to obtain an bound for $\Vert\omega(t)\Vert_{L^\infty}.$ 
\begin{prop}\label{prop c} Under the same hypotheses of Proposition \ref{prop b} and if in addition $\omega^{0}\in L^\infty.$ Then we have for every $t \in \RR_{+},$
$$\Vert\omega(t)\Vert_{L^\infty}+\Vert\nabla\rho\Vert_{L_{t}^{1} L^\infty}\le\Phi_{4}(t).$$ We recall that $\Phi_{4}(t)$ does not depend on the viscosity.
\end{prop}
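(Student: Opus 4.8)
The plan is to control the two quantities $\|\omega(t)\|_{L^\infty}$ and $\|\nabla\rho\|_{L^1_tL^\infty}$ simultaneously through a single coupled Gronwall argument, taking the bounds of Proposition \ref{prop b} as already established.

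First I would estimate $\|\omega(t)\|_{L^\infty}$. Since $\omega=\omega_\theta e_\theta$ and $|\omega|=|\omega_\theta|$, it suffices to treat the scalar $\omega_\theta$, which solves
$$\partial_t\omega_\theta+v\cdot\nabla\omega_\theta-\nu\Bigl(\Delta\omega_\theta-\frac{\omega_\theta}{r^2}\Bigr)=\frac{v^r}{r}\omega_\theta-\partial_r\rho.$$
The key observation is that the zeroth-order term $+\nu\,\omega_\theta/r^2$ carries a \emph{favorable} sign: testing against $|\omega_\theta|^{p-2}\omega_\theta$, the convection term drops by incompressibility while both $-\nu\Delta\omega_\theta$ and $+\nu\omega_\theta/r^2$ give nonnegative contributions. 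Discarding them and letting $p\to\infty$ yields the maximum-principle bound
$$\|\omega(t)\|_{L^\infty}\le\|\omega^0\|_{L^\infty}+\int_0^t\Bigl\|\tfrac{v^r}{r}(\tau)\Bigr\|_{L^\infty}\|\omega(\tau)\|_{L^\infty}\,d\tau+\int_0^t\|\partial_r\rho(\tau)\|_{L^\infty}\,d\tau.$$
Here $\|\tfrac{v^r}{r}\|_{L^\infty}\le\Phi_2$ by Proposition \ref{prop b}, and $\|\partial_r\rho\|_{L^\infty}\le\|\nabla\rho\|_{L^\infty}$, so everything reduces to controlling $\|\nabla\rho\|_{L^1_tL^\infty}$.

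Next I would bound $\|\nabla\rho\|_{L^1_tL^\infty}$ via the maximal smoothing effect. Since $\rho$ solves $\partial_t\rho+v\cdot\nabla\rho-\Delta\rho=0$ (so $\kappa=1$, $g=0$), Proposition \ref{smooth} applies. Writing $\|\nabla\rho\|_{L^\infty}\lesssim\sum_{j\ge-1}2^j\|\Delta_j\rho\|_{L^\infty}$ and passing from $L^{\bar p}$ to $L^\infty$ by Bernstein (cost $2^{3j/\bar p}$), I would use, for $j\ge0$, the identity $2^{j(1+3/\bar p)}\|\Delta_j\rho\|_{L^1_tL^{\bar p}}=2^{j(3/\bar p-1)}\,2^{2j}\|\Delta_j\rho\|_{L^1_tL^{\bar p}}$; because $\bar p>3$ the exponent $3/\bar p-1$ is negative, so inserting Proposition \ref{smooth} makes the resulting series in $j$ (including the logarithmic factor $(j+1)$) convergent. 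Treating $j=-1$ by Bernstein together with the $L^{\bar p}$ maximum principle of Lemma \ref{kj1} (giving $\|\rho(t)\|_{L^{\bar p}}\le\|\rho^0\|_{L^{\bar p}}$), and bounding $\|\nabla\Delta_{-1}v\|_{L^1_tL^\infty}\lesssim\int_0^t\|v\|_{L^2}\le C_0t(1+t)$ through Proposition \ref{prop a}, this yields
$$\|\nabla\rho\|_{L^1_tL^\infty}\lesssim C_0(1+t^2)+C_0\|\omega\|_{L^1_tL^\infty}.$$
I note that only $\rho^0\in L^{\bar p}$ (guaranteed by $\rho^0\in B^0_{\bar p,1}$) is used here, so no $L^\infty$ bound on the data is needed.

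Finally I would close the system. Substituting the last display into the maximum-principle inequality turns it into
$$\|\omega(t)\|_{L^\infty}\le C_0(1+t^2)+\int_0^t\bigl(\Phi_2(\tau)+C_0\bigr)\|\omega(\tau)\|_{L^\infty}\,d\tau,$$
and Gronwall's lemma together with the composition rules $\int_0^t\Phi_l(\tau)\,d\tau\le\Phi_l(t)$ and $\exp\bigl(\int_0^t\Phi_l(\tau)\,d\tau\bigr)\le\Phi_{l+1}(t)$ produces $\|\omega(t)\|_{L^\infty}\le\Phi_4(t)$; feeding this back into the previous paragraph gives $\|\nabla\rho\|_{L^1_tL^\infty}\le\Phi_4(t)$ as well. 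I expect the main obstacle to be precisely this \emph{circular coupling}: the vorticity bound requires $\|\nabla\rho\|_{L^1_tL^\infty}$, while the smoothing bound for $\nabla\rho$ requires $\|\omega\|_{L^1_tL^\infty}$. The point that makes the argument work is that this coupling is \emph{linear}, so it decouples under a single Gronwall estimate. The secondary technical difficulties are the correct handling of the singular term $\nu\,\omega_\theta/r^2$ in the $L^p$ maximum principle and the frequency summation, where the hypothesis $\bar p>3$ is exactly what renders the high-frequency series summable.
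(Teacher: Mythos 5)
Your proposal is correct and follows essentially the same route as the paper: maximum principle on the vorticity equation together with the bound $\Vert\frac{v^r}{r}\Vert_{L^\infty}\le\Phi_2(t)$ from Proposition \ref{prop b}, then the maximal smoothing effect of Proposition \ref{smooth} with Bernstein inequalities (using $\bar p>3$ to sum the dyadic series with its $(j+1)$ factor) to get $\Vert\nabla\rho\Vert_{L^1_tL^\infty}\lesssim C_0(1+t^2)+C_0\Vert\omega\Vert_{L^1_tL^\infty}$, closed by Gronwall. The only differences are cosmetic: you work with the scalar $\omega_\theta$ equation and note explicitly the favorable sign of $\nu\omega_\theta/r^2$ (the paper applies the maximum principle to the vector equation directly), and you handle the $j=-1$ block in $L^{\bar p}$ rather than $L^2$.
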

\begin{proof} Recall that the vorticity $\omega$ satisfies the equation
$$\partial_{t}\omega+v\cdot\nabla\omega-\nu\Delta\omega=\frac {v^r}{r}\omega+curl(\rho e_{z}).$$
 Applying the maximum principle and using Proposition \ref{prop b},
\begin{eqnarray*}
\Vert\omega(t)\Vert_{L^\infty}&\le& \Vert\omega^0\Vert_{L^\infty}+\int_0^t \Vert\frac{v^r}{r}(\tau)\Vert_{L^{\infty}}\Vert\omega(\tau)\Vert_{L^\infty}d\tau+ \int_0^t \Vert curl(\rho e_z)(\tau) \Vert_{L^\infty}d\tau\\
&\le& \Vert\omega^{0}\Vert_{L^\infty}+\int_{0}^{t}\Phi_{2}(\tau)\Vert\omega(\tau)\Vert_{L^\infty} d\tau+\int_0^t \Vert\nabla\rho(\tau)\Vert_{L^\infty} d\tau. 
\end{eqnarray*}
This implies by Gronwall inequality,
\begin{equation}\label{dc}
\Vert\omega(t)\Vert_{L^\infty}\le\bigg(\Vert\omega^{0}\Vert_{L^\infty}+\int_0^t \Vert\nabla\rho(\tau)\Vert_{L^\infty}d\tau\bigg)\Phi_{3}(t).
\end{equation}
It remains to estimate $\Vert\nabla\rho\Vert_{L_{t}^{1}L^{\infty}}.$ For this purpose we use Bernstein inequality for $\bar{p}>3,$ we obtain
\begin{eqnarray*}
\Vert\nabla\rho\Vert_{L_{t}^{1}L^{\infty}}&\le& \Vert\nabla\Delta_{-1}\rho\Vert_{L_t^1L^\infty}+\sum_{j\ge 0}\Vert\nabla\Delta_{j}\rho\Vert_{L_t^1L^\infty}\\
&\lesssim& \Vert\rho\Vert_{L_{t}^{1}L^{2}}+\sum_{j\ge0}2^{j(\frac{3}{\bar{p}}+1)}\Vert \Delta_{j}\rho\Vert_{L_{t}^{1}L^{\bar{p}}}\\
\end{eqnarray*}
Using now Proposition \ref{smooth}, Proposition \ref{prop a} and Bernstein inequality for $\bar{p}>3,$
\begin{eqnarray}\label{dd}
\nonumber \Vert\nabla\rho\Vert_{L_{t}^{1}L^{\infty}}&\lesssim&  \Vert\rho^{0}\Vert_{L^2}t+\sum_{j\ge0}2^{j(\frac{3}{\bar{p}}-1)}\Vert\rho^0 \Vert_{L^{\bar{p}}}\Big(1+(j+1)\int_0^t \Vert\omega(\tau)\Vert_{L^\infty} d\tau\Big)\\
\nonumber &+& \sum_{j\ge0}2^{j(\frac{3}{\bar{p}}-1)}\Vert\rho^0 \Vert_{L^{\bar{p}}} \int_0^t \Vert\nabla\Delta_{-1} v(\tau)\Vert_{L^\infty} d\tau\\
\nonumber &\lesssim& \Vert\rho^{0}\Vert_{L^2}t+\Vert\rho^0 \Vert_{L^{\bar{p}}}\Big(1+\int_0^t \Vert\omega(\tau)\Vert_{L^\infty} d\tau+\int_0^t \Vert v(\tau)\Vert_{L^2} d\tau\Big)\\
\nonumber &\lesssim& \Vert\rho^{0}\Vert_{L^2}t+\Vert\rho^0 \Vert_{L^{\bar{p}}}\Big(1+t\Vert v\Vert_{L_{t}^{\infty}L^2}+\int_0^t \Vert\omega(\tau)\Vert_{L^\infty} d\tau\Big)\\
\nonumber &\lesssim& \Vert\rho^{0}\Vert_{B_{2,1}^{0}\cap B_{\bar{p},1}^{0}}\Big(1+t+C_{0}t(1+t)+\int_0^t \Vert\omega(\tau)\Vert_{L^\infty} d\tau\Big)\\
&\le& C_{0}\Big(1+t^{2}+\int_0^t \Vert\omega(\tau)\Vert_{L^\infty} d\tau\Big).
\end{eqnarray}
Putting \eqref{dd} into \eqref{dc} and using Gronwall's inequality, we obtain
\begin{eqnarray*}
\Vert\omega\Vert_{L^\infty}&\lesssim& \bigg(\Vert\omega^0\Vert_{L^\infty}+C_0 \big(1+t^2+\int_0^t \Vert\omega(\tau)\Vert_{L^\infty} d\tau\big)\bigg)\Phi_{3}(t)\\
&\le& \Phi_{4}(t).
\end{eqnarray*}
This gives in \eqref{dd},
$$\Vert\nabla\rho\Vert_{L_{t}^{1} L^\infty}\le\Phi_{4}(t),$$
which is the desired result.
\end{proof}
Now, we will propagate globally in time the subcritical Sobolev regularities which is based on the estimate of $\Vert\nabla v(t)\Vert_{L^\infty}.$ 
 More precisely, we prove the following Proposition.
\begin{prop}\label{h1} Let $(v, \rho)$ be a smooth solution of the stratified system \eqref{a} with $\nu\ge 0$, and such that $(v^0, \rho^0)\in H^{s}\times H^{s-2}$ with $\frac{5}{2}<s.$ Then there exists $\Psi\in\mathcal{U}$ such that $(v^0, \rho^0)\in H^{s,\Psi}\times H^{s-2,\Psi}$ and for every $t \in \RR_+,$ 
$$\Vert v\Vert_{\widetilde{L}_{t}^{\infty}H^{s,\Psi}}+\Vert\rho\Vert_{\widetilde{L}_{t}^{\infty}H^{s-2,\Psi}}+\Vert\rho\Vert_{\widetilde{L}_{t}^{1}H^{s,\Psi}}\lesssim\Big(\Vert v^{0}\Vert_{H^{s,\Psi}}+\Vert\rho^{0}\Vert_{H^{s-2,\Psi}}(1+t)\Big)e^{C(\Vert\nabla v \Vert_{L_{t}^{1}L^{\infty}}+\Vert\nabla\rho\Vert_{L_{t}^{1} L^\infty})}.$$
If in addition, $\rho^{0}\in L^{m}$ with $m>6$ and $\vert x_{h}\vert^{2}\rho^{0}\in L^2,$ then we get for every $t\ge 0,$ 
\begin{equation*}
\Vert\nabla v(t)\Vert_{L^\infty}\le\Phi_{5}(t),\qquad\Vert v\Vert_{\widetilde{L}_{t}^{\infty}H^{s,\Psi}}+\Vert\rho\Vert_{\widetilde{L}_{t}^{\infty}H^{s-2,\Psi}} +\Vert\rho\Vert_{\widetilde{L}_{t}^{1}H^{s,\Psi}}\le\Phi_{6}(t).
\end{equation*}
The constants $C,$ $\Phi_{5}(t)$ and $\Phi_{6}(t)$ do not depend on the viscosity.
\end{prop}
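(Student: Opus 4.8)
The plan is to localize both equations of \eqref{a} in frequency, estimate each dyadic block in $L^2$, and close a Gronwall inequality on the coupled quantity, dropping the viscous dissipation (which has a favorable sign) so that all constants are uniform in $\nu$. The profile costs nothing: applying Lemma \ref{Besov} to $v^0$ and $\rho^0$ and retaining a common minorant yields $\Psi\in\mathcal{U}_\infty\subset\mathcal{U}$ with $v^0\in H^{s,\Psi}$ and $\rho^0\in H^{s-2,\Psi}$, and every step below is carried through the weight $\Psi(j)2^{js}$.

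For the velocity I would apply $\Delta_j$ to the first equation, pair with $\Delta_j v$ in $L^2$, and use $\textnormal{div}\,v=0$ to annihilate the convection and pressure contributions; since $\nu\ge 0$ the term $\nu\|\nabla\Delta_j v\|_{L^2}^2$ is nonnegative and discarded. This gives
$$\|\Delta_j v(t)\|_{L^2}\le \|\Delta_j v^0\|_{L^2}+\int_0^t\|\Delta_j\rho(\tau)\|_{L^2}d\tau+\int_0^t\|[\Delta_j,v\cdot\nabla]v(\tau)\|_{L^2}d\tau,$$
and after weighting by $\Psi(j)2^{js}$, taking $\ell^2$, using Proposition \ref{pr4} (with index $s>0$) and Minkowski,
$$\|v\|_{\widetilde{L}_t^\infty H^{s,\Psi}}\le \|v^0\|_{H^{s,\Psi}}+\|\rho\|_{\widetilde{L}_t^1 H^{s,\Psi}}+C\int_0^t\|\nabla v\|_{L^\infty}\|v\|_{H^{s,\Psi}}d\tau.$$
The forcing $\rho\,e_z$ is thus measured in $\widetilde{L}_t^1 H^{s,\Psi}$, a gain of two derivatives over the natural regularity of $\rho$, which is exactly what the parabolic smoothing of the density equation supplies. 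For $\rho$ I would use Proposition \ref{pro xc} with $p=2$ (the diffusion coefficient being $1$, not $\nu$): weighting $\|\Delta_j\rho\|_{L_t^\infty L^2}$ by $\Psi(j)2^{j(s-2)}$ and the smoothing term $2^{2j}\|\Delta_j\rho\|_{L_t^1 L^2}$ by the same factor (so that $2^{2j}2^{j(s-2)}=2^{js}$), summing in $\ell^2$, and estimating the commutator by Proposition \ref{pr4} at regularity $s-2>0$, gives
$$\|\rho\|_{\widetilde{L}_t^\infty H^{s-2,\Psi}}+\|\rho\|_{\widetilde{L}_t^1 H^{s,\Psi}}\lesssim \|\rho^0\|_{H^{s-2,\Psi}}(1+t)+\int_0^t\big(\|\nabla v\|_{L^\infty}\|\rho\|_{H^{s-2,\Psi}}+\|\nabla\rho\|_{L^\infty}\|v\|_{H^{s-2,\Psi}}\big)d\tau,$$
the factor $(1+t)$ coming from the block $\Delta_{-1}\rho$, where the $2^{2j}$ smoothing is absent and one merely writes $\|\Delta_{-1}\rho\|_{L_t^1 L^2}\le t\|\rho^0\|_{L^2}$. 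Substituting the bound for $\|\rho\|_{\widetilde{L}_t^1 H^{s,\Psi}}$ into the velocity estimate, adding, using $\|u\|_{H^{s-2,\Psi}}\lesssim\|u\|_{H^{s,\Psi}}$, and applying Gronwall to $W(t):=\|v\|_{\widetilde{L}_t^\infty H^{s,\Psi}}+\|\rho\|_{\widetilde{L}_t^\infty H^{s-2,\Psi}}+\|\rho\|_{\widetilde{L}_t^1 H^{s,\Psi}}$ produces precisely the first displayed inequality, with $\|\nabla v\|_{L_t^1 L^\infty}$ and $\|\nabla\rho\|_{L_t^1 L^\infty}$ in the exponent.

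For the second part the point is to convert the Lipschitz norms in the exponent into explicit iterated exponentials. Proposition \ref{prop c} already supplies $\|\nabla\rho\|_{L_t^1 L^\infty}\le\Phi_4(t)$ and $\|\omega(t)\|_{L^\infty}\le\Phi_4(t)$, so only $\|\nabla v\|_{L^\infty}$ is missing. Here I would invoke the logarithmic interpolation inequality for divergence-free fields (legitimate since $s>\tfrac52$),
$$\|\nabla v(t)\|_{L^\infty}\lesssim \|v(t)\|_{L^2}+\|\omega(t)\|_{L^\infty}\log\big(e+\|v(t)\|_{H^{s,\Psi}}\big),$$
and combine it with $\|v(t)\|_{L^2}\le C_0(1+t)$ from Proposition \ref{prop a}. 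Inserting this into the first part and setting $Z(t):=\log(e+W(t))$, the coupled inequality collapses to the linear form $Z(t)\lesssim\Phi_4(t)+C\int_0^t\Phi_4(\tau)Z(\tau)d\tau$, whose Gronwall solution together with $\exp\big(\int_0^t\Phi_l\big)\le\Phi_{l+1}$ yields $Z(t)\le\Phi_5(t)$, hence $W(t)\le\Phi_6(t)$ and finally $\|\nabla v(t)\|_{L^\infty}\le\Phi_5(t)$. The main obstacle is exactly this last step: the naive embedding $H^s\hookrightarrow\textnormal{Lip}$ would only give $\|\nabla v\|_{L^\infty}\lesssim\|v\|_{H^{s,\Psi}}$, leading to a nonlinear Gronwall that blows up in finite time, so global control rests entirely on the logarithmic refinement, on the a priori bound $\|\omega\|_{L^\infty}\le\Phi_4$ from Proposition \ref{prop c} (itself the payoff of the axisymmetric commutator analysis of Proposition \ref{prop jh}), and on the careful bookkeeping of the iterated exponentials $\Phi_l$.
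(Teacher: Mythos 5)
Your proposal is correct and follows essentially the same route as the paper: frequency-localized $L^2$ estimates on dyadic blocks with the viscous term discarded by sign, Proposition \ref{pro xc} for the parabolic gain of two derivatives on $\rho$ (with the $\Delta_{-1}$ block handled separately, producing the factor $1+t$), Proposition \ref{pr4} for both commutators at regularities $s$ and $s-2$, and the logarithmic estimate $\Vert\nabla v\Vert_{L^\infty}\lesssim\Vert v\Vert_{L^2}+\Vert\omega\Vert_{L^\infty}\log(e+\Vert v\Vert_{H^{s,\Psi}})$ combined with Propositions \ref{prop a} and \ref{prop c} to close the Lipschitz bound uniformly in $\nu$. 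The only cosmetic difference is in the final bookkeeping: you run Gronwall on $Z(t)=\log(e+W(t))$, whereas the paper runs it directly on $\Vert\nabla v\Vert_{L^\infty}$ through the inequality $\Vert\nabla v(t)\Vert_{L^\infty}\le\Phi_{4}(t)\big(1+t+\int_0^t\Vert\nabla v(\tau)\Vert_{L^\infty}\,d\tau\big)$, both being the same linearization trick.
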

\begin{Rema} 
From the Definition \ref{de1}, we observe that when the profile $\Psi$ is a nonnegative constant, then $H^{s,\Psi}=H^{s}$. In this case, we get the global persistence of the Sobolev regularities  
\begin{equation*}
\Vert\nabla v(t)\Vert_{L^\infty}\le\Phi_{5}(t),\qquad\Vert v\Vert_{\widetilde{L}_{t}^{\infty}H^{s}}+\Vert\rho\Vert_{\widetilde{L}_{t}^{\infty}H^{s-2}} +\Vert\rho\Vert_{\widetilde{L}_{t}^{1}H^{s}}\le\Phi_{6}(t).
\end{equation*}
Recall that $\Phi_{5}(t)$ and $\Phi_{6}(t)$ do not depend on the viscosity.
\end{Rema}
\begin{proof}
We localize in frequency the equation of the velocity, then we have for every $j \ge -1,$
$$\partial_{t}\Delta_{j} v+v\cdot\nabla\Delta_{j} v-\nu\Delta\Delta_{j} v+\nabla\Delta_{j}p=\Delta_{j}\rho e_{z}-[\Delta_{j},v\cdot\nabla]v.$$ 
Taking the $L^2$- scalar product of the above equation with $\Delta_{j}v$ and using H\"older inequality,
$$\frac{1}{2}\frac{d}{dt}\Vert\Delta_{j} v(t)\Vert^{2}_{L^2}+\nu\Vert\nabla\Delta_{j}v\Vert^{2}_{L^2}\le \Vert\Delta_{j} v(t)\Vert_{L^2}\Big(\Vert\Delta_{j} \rho(t)\Vert_{L^2}+\Vert[\Delta_{j},v\cdot\nabla]v(t)\Vert_{L^2}\Big).$$
Then,
$$\frac{d}{dt}\Vert\Delta_{j} v(t)\Vert_{L^2}\le \Vert\Delta_{j}\rho(t)\Vert_{L^2}+ \Vert[\Delta_{j},v\cdot\nabla]v(t)\Vert_{L^2}.$$
Integrating in time we obtain,
$$\Vert\Delta_{j} v(t)\Vert_{L^2}\le \Vert\Delta_{j} v^{0}\Vert_{L^2}+\Vert\Delta_{j}\rho\Vert_{L_{t}^{1} L^2}+ \Vert[\Delta_{j},v\cdot\nabla]v\Vert_{L_{t}^{1}L^2}.$$
Multiplying this inequality by $\Psi(j)2^{sj},$ taking the $\ell^{2}$-norm, we get
$$\Vert v\Vert_{\widetilde{L}_{t}^{\infty}H^{s,\Psi}}\le\Vert v^{0}\Vert_{H^{s,\Psi}}+\Vert\rho\Vert_{\widetilde{L}_{t}^{1}H^{s,\Psi}}+\Big(\Psi(j)2^{sj}\Vert[\Delta_{j},v\cdot\nabla]v\Vert_{L_{t}^{1}L^{2}}\Big)_{\ell^{2}}.$$
Combining Lemma \ref{le1} with Proposition \ref{pr4} to get,
\begin{eqnarray*} 
\Big(\Psi(j)2^{sj}\Vert[\Delta_{j},v\cdot\nabla]v\Vert_{L_{t}^{1}L^{2}}\Big)_{\ell^2} &\lesssim& \int_{0}^{t}\Big(\Psi(j)2^{sj}\Vert[\Delta_{j},v\cdot\nabla]v(\tau)\Vert_{L^{2}}\Big)_{\ell^2}d\tau\\
&\lesssim& \int_{0}^{t}\Vert\nabla v(\tau) \Vert_{L^\infty}\Vert v(\tau)\Vert_{H^{s,\Psi}}d\tau.
\end{eqnarray*}
Therefore we get,
\begin{equation}\label{q1}
\Vert v\Vert_{\widetilde{L}_{t}^{\infty}H^{s,\Psi}}\le\Vert v^{0}\Vert_{H^{s,\Psi}}+\Vert\rho\Vert_{\widetilde{L}_{t}^{1}H^{s,\Psi}}+C\int_{0}^{t}\Vert\nabla v(\tau) \Vert_{L^\infty}\Vert v(\tau)\Vert_{H^{s,\Psi}}d\tau.
\end{equation}
Now to estimate $\Vert\rho\Vert_{\widetilde{L}_{t}^{1}H^{s,\Psi}},$ we use Proposition \ref{pro xc} for $j\ge0$, 
\begin{equation*}
\Vert\Delta_{j}\rho\Vert_{L_{t}^{\infty}L^{2}}+2^{2j}\Vert\Delta_{j}\rho\Vert_{L_{t}^{1}L^{2}}\lesssim \Vert\Delta_{j}\rho^{0}\Vert_{L^2}+\Vert[\Delta_{j},v\cdot\nabla]\rho\Vert_{L_{t}^{1}L^{2}}.
\end{equation*}
Multiplying this last inequality by $\Psi(j)2^{j(s-2)},$ taking the $\ell^2$ norm, using H\"older inequality and Proposition \ref{prop a}-(b), we find
\begin{eqnarray*}
\Vert\rho\Vert_{\widetilde{L}_{t}^{\infty}H^{s-2,\Psi}}+\Vert\rho\Vert_{\widetilde{L}_{t}^{1}H^{s,\Psi}}&\le& \Vert\Delta_{-1}\rho\Vert_{L_{t}^{\infty}L^2}+\Vert\Delta_{-1}\rho\Vert_{L_{t}^{1}L^2} +\Vert\rho^{0}\Vert_{H^{s-2\Psi}}\\
&+& \Big\Vert\Psi(j)2^{j(s-2)}\Vert[\Delta_{j},v\cdot\nabla]\rho \Vert_{L_{t}^{1}L^{2}}\Big\Vert_{\ell^2}\\
&\le& C t\Vert\rho^{0}\Vert_{L^2}+C\Vert\rho^{0}\Vert_{H^{s-2,\Psi}}+ \Big\Vert\Psi(j)2^{j(s-2)}\Vert[\Delta_{j},v\cdot\nabla]\rho\Vert_{L_{t}^{1}L^{2}}\Big\Vert_{\ell^2}.
\end{eqnarray*}
Since $0<s-2,$ then using Proposition \ref{pr4}, we obtain that
$$\Big\Vert\Big(\Psi(j)2^{j(s-2)}\Vert[\Delta_{j},v\cdot\nabla] \rho\Vert_{L_{t}^{1}L^{2}}\Big)_{j}\Big\Vert_{\ell^2}\le C\int_{0}^{t}\big(\Vert\nabla v(\tau)\Vert_{L^\infty}\Vert\rho(\tau)\Vert_{H^{s-2,\Psi}}+\Vert\nabla\rho(\tau)\Vert_{L^\infty}\Vert v(\tau)\Vert_{H^{s-2,\Psi}}\big)d\tau.$$ 
Therefore by using the embeddings $H^{s,\Psi}\hookrightarrow H^{s-2,\Psi},$ we find 
\begin{eqnarray}\label{q2}
\nonumber \Vert\rho\Vert_{\widetilde{L}_{t}^{\infty}H^{s-2,\Psi}}+\Vert\rho\Vert_{\widetilde{L}_{t}^{1}H^{s,\Psi}}&\lesssim& \Vert\rho^{0}\Vert_{L^2}t+\Vert\rho^{0}\Vert_{H^{s-2,\Psi}}+\int_{0}^{t}\Vert\nabla\rho(\tau)\Vert_{L^\infty}\Vert v(\tau)\Vert_{\widetilde{L}_{\tau}^{\infty}H^{s,\Psi}}d\tau\\
&+& \int_{0}^{t}\Vert\nabla v(\tau)\Vert_{L^\infty}\Vert\rho(\tau)\Vert_{\widetilde{L}_{\tau}^{\infty}H^{s-2,\Psi}}d\tau.
\end{eqnarray}
Set $f(t):=\Vert v\Vert_{\widetilde{L}_{t}^{\infty}H^{s,\Psi}}+\Vert\rho\Vert_{\widetilde{L}_{t}^{\infty}H^{s-2,\Psi}}+\Vert\rho\Vert_{\widetilde{L}_{t}^{1}H^{s,\Psi}} $ and combining \eqref{q1} and \eqref{q2} with Gronwall's inequality, we obtain,
\begin{equation}\label{1q2}
f(t)\lesssim\Big(\Vert v^{0}\Vert_{H^{s,\Psi}}+\Vert\rho^{0}\Vert_{L^2}t+\Vert\rho^{0}\Vert_{H^{s-2,\Psi}}\Big)e^{C\big(\Vert\nabla v \Vert_{L_{t}^{1}L^{\infty}}+\Vert\nabla\rho \Vert_{L_{t}^{1}L^{\infty}}\big)}.
\end{equation}
To estimate the term $\Vert\nabla\rho\Vert_{L_{t}^{1}L^\infty},$ we use Proposition \ref{prop c} and to estimate the Lipschitz norm of the velocity, 
we use the classical logarithmic estimate: for $s>\frac{5}{2},$
\begin{eqnarray*}
\Vert\nabla v\Vert_{L^\infty}&\lesssim& \Vert v\Vert_{L^2}+\Vert\omega\Vert_{L^\infty}\log(e+\Vert v \Vert_{H^s})\\
&\lesssim& \Vert v\Vert_{L^2}+\Vert\omega\Vert_{L^\infty}\log(e+\Vert v\Vert_{\widetilde{L}_{t}^{\infty}H^{s,\Psi}}),
\end{eqnarray*}
where we have used in the last line the embedding $H^{s,\Psi}\hookrightarrow H^{s}.$ 
Combining this estimate with \eqref{1q2}, Proposition \ref{prop a}-(a) and Proposition \ref{prop c}, we get
$$\Vert\nabla v\Vert_{L^\infty}\le\Phi_{4}(t)\bigg(1+t+\int_{0}^{t}\Vert\nabla v(\tau)\Vert_{L^\infty}d\tau\bigg).$$
This gives by Gronwall's inequality
$$\Vert\nabla v(t) \Vert_{L^\infty}\le\Phi_{5}(t).$$
Plugging this estimate into \eqref{1q2}, we obtain finally
\begin{eqnarray*}
f(t):=\Vert v\Vert_{\widetilde{L}_{t}^{\infty}H^{s,\Psi}}+\Vert\rho\Vert_{\widetilde{L}_{t}^{\infty}H^{s-2,\Psi}}+\Vert\rho\Vert_{\widetilde{L}_{t}^{1}H^{s,\Psi}}\le\Phi_{6}(t).
\end{eqnarray*}
This end the proof of the proposition.
\end{proof}
\subsection{Inviscid limit}\label{A5}
we will prove that the family $(v_{\nu},\rho_\nu)_{\nu>0}$ is converges strongly in $L_{T}^{\infty}H^{s}\times L_{T}^{\infty}H^{s-2}$ to the solution $(v,\rho)$ of the Euler-stratified system \eqref{fd1} as $\nu\to 0.$ More precisely, we prove the following proposition.
\begin{prop}\label{pro 98}
Let $s>\frac{5}{2},$ $v^{0}$ be an axisymmetric divergence-free vector field such that $v^{0}\in H^{s}$ and $\rho^{0}\in \chi_{m}^{s}$ with $6<m.$ Then the solution $(v_\nu,\rho_\nu)$ to the system \eqref{a} converges strongly as $\nu\to 0$ to the unique solution $(v,\rho)$ of the system \eqref{fd1} in $L_{loc}^{\infty}(\RR_+;H^{s})\times L_{loc}^{\infty}(\RR_+;H^{s-2}).$\\ 
More precisely, there exists  $\Psi\in\mathcal{U}_{\infty}$ depending on the profile of the initial data and such that for every $T>0$
$$\Vert v_{\nu}-v\Vert_{L_{T}^{\infty}H^{s}}+\Vert\rho_{\nu}-\rho\Vert_{L_{T}^{\infty}H^{s-2}}\le\Big(\sqrt{\nu}+\frac{1}{\Psi(\log(\frac{1}{\nu}))}\Big)\Phi_{7}(T).$$
\end{prop}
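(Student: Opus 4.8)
The plan is to prove the convergence in two stages: first an $L^2$ estimate yielding the optimal rate $\nu t$, and then an upgrade to the endpoint regularity $H^s\times H^{s-2}$ by a frequency truncation that trades the low-frequency $L^2$ smallness against a uniform high-frequency decay. Set $\bar v=v_\nu-v$ and $\bar\rho=\rho_\nu-\rho$. Subtracting \eqref{fd1} from \eqref{a}, writing $v_\nu\cdot\nabla v_\nu-v\cdot\nabla v=v_\nu\cdot\nabla\bar v+\bar v\cdot\nabla v$ and likewise for the density, and using that both fields are divergence-free, the difference solves
$$\partial_t\bar v+v_\nu\cdot\nabla\bar v-\nu\Delta\bar v+\bar v\cdot\nabla v+\nabla(p_\nu-p)=\bar\rho\,e_z+\nu\Delta v,$$
$$\partial_t\bar\rho+v_\nu\cdot\nabla\bar\rho-\Delta\bar\rho+\bar v\cdot\nabla\rho=0,$$
with zero initial data. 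Taking the $L^2$ inner products with $\bar v$ and $\bar\rho$, the transport terms in $v_\nu$ and the pressure gradient drop out, while the crucial forcing $\nu\Delta v$ is kept as is and estimated by $\nu\|\Delta v\|_{L^2}\|\bar v\|_{L^2}$ (\emph{not} integrated by parts). With $G(t):=\big(\|\bar v(t)\|_{L^2}^2+\|\bar\rho(t)\|_{L^2}^2\big)^{1/2}$ this gives
$$\frac{d}{dt}G\le\big(\|\nabla v\|_{L^\infty}+\|\nabla\rho\|_{L^\infty}+1\big)G+\nu\|\Delta v\|_{L^2},\qquad G(0)=0.$$
Since $\|\Delta v\|_{L^2}\le\|v\|_{H^s}\le\Phi_6$ and $\int_0^t(\|\nabla v\|_{L^\infty}+\|\nabla\rho\|_{L^\infty}+1)\,d\tau$ is bounded uniformly in $\nu$ by the ($\nu=0$ cases of the) estimates in Propositions \ref{prop c} and \ref{h1}, Gronwall's lemma yields $G(t)\le\nu t\,f(t)$ with $f$ explicit and $\nu$-independent; this is the rate of Remark (1).

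A direct energy estimate at the $H^s$ level is hopeless, because the forcing $\nu\Delta v_\nu$ is only $O(\nu)$ in $H^{s-2}$, not in $H^s$, uniformly in $\nu$. Instead I exploit the extra regularity of Proposition \ref{h1}: by Lemma \ref{Besov} applied to $(v^0,\rho^0)$ there is a profile $\Psi_0\in\mathcal{U}_\infty$ with $(v^0,\rho^0)\in H^{s,\Psi_0}\times H^{s-2,\Psi_0}$, and Proposition \ref{h1} then bounds $v_\nu$ in $\widetilde{L}^\infty_T H^{s,\Psi_0}$ and $\rho_\nu$ in $\widetilde{L}^\infty_T H^{s-2,\Psi_0}$ by $\Phi_6(T)$, uniformly in $\nu\ge0$. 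In particular the Euler solution $(v,\rho)$, being the case $\nu=0$, obeys the same bound, so by Minkowski's inequality (cf. Lemma \ref{le1}, whose embedding $\widetilde{L}^\rho_T\hookrightarrow L^\rho_T$ holds here since $\rho=\infty\ge r=2$)
$$\sup_{t\le T}\|\bar v(t)\|_{H^{s,\Psi_0}}+\sup_{t\le T}\|\bar\rho(t)\|_{H^{s-2,\Psi_0}}\le 2\,\Phi_6(T).$$

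Now, for $t\le T$ and $N\in\NN$, I split $\|\bar v(t)\|_{H^s}^2$ at frequency $2^N$. The low part is bounded by $2^{2Ns}\|\bar v(t)\|_{L^2}^2\le 2^{2Ns}\big(\nu Tf(T)\big)^2$; for the high part, since $\Psi_0$ is nondecreasing one has $2^{2js}\le\Psi_0(N)^{-2}\Psi_0(j)^2 2^{2js}$ for $j\ge N$, so it is controlled by $\Psi_0(N)^{-2}\|\bar v(t)\|_{H^{s,\Psi_0}}^2$. Hence, and likewise for $\bar\rho$ with weight $s-2\le s$,
$$\|\bar v\|_{L^\infty_T H^s}+\|\bar\rho\|_{L^\infty_T H^{s-2}}\lesssim 2^{Ns}\nu Tf(T)+\Psi_0(N)^{-1}\Phi_6(T).$$
Choosing $N=N(\nu):=\big\lfloor\frac{1}{2s}\log_2(1/\nu)\big\rfloor$ makes $2^{Ns}\nu\le\sqrt\nu$, and defining $\Psi(x):=\Psi_0\big(\lfloor x/(2s\log 2)\rfloor\big)$—which lies in $\mathcal{U}_\infty$ (nondecreasing, tending to $+\infty$, and doubling because the floor increments by at most one per unit step) and satisfies $\Psi(\log(1/\nu))=\Psi_0(N(\nu))$—one absorbs the $\nu$-independent $T$-factors into a single $\Phi_7(T)$ to obtain exactly
$$\|\bar v\|_{L^\infty_T H^s}+\|\bar\rho\|_{L^\infty_T H^{s-2}}\le\Big(\sqrt\nu+\frac{1}{\Psi(\log(1/\nu))}\Big)\Phi_7(T).$$
Letting $\nu\to0$ then gives the announced strong convergence.

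The main obstacle is precisely this endpoint upgrade: the system provides no uniform-in-$\nu$ control of the viscous forcing in $H^s$, so one cannot close an estimate directly, and the convergence must be extracted from the competition between the $\nu t$ rate on low frequencies and the uniform extra decay carried by $\Psi_0$ on high frequencies. Balancing the two regimes is what forces the logarithmically slow factor $1/\Psi(\log(1/\nu))$ and explains the absence of any algebraic rate in the strong topology. A secondary but essential point is to keep the term $\nu\Delta v$ un-integrated in the $L^2$ estimate, since this is exactly what produces the sharp $\nu t$ rate rather than a mere $\sqrt\nu$.
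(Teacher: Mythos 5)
Your proof is correct and follows essentially the same route as the paper: an $L^2$ energy estimate for the difference system giving the $O(\nu t)$ rate, followed by the identical high/low frequency splitting played against the uniform $H^{s,\Psi}\times H^{s-2,\Psi}$ bounds of Proposition \ref{h1}, with the same logarithmic choice of cut-off frequency and rescaling of the profile $\Psi$. The only cosmetic difference is in the $L^2$ step, where you run a single Gronwall argument on the combined quantity $G=\big(\Vert \bar v\Vert_{L^2}^2+\Vert\bar\rho\Vert_{L^2}^2\big)^{1/2}$, while the paper estimates $W_\nu$ and $\eta_\nu$ separately (using the maximum principle for the density equation) and closes with a second application of Gronwall.
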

\begin{proof}
We will proceed in two steps. In the first one, we prove that for any fixed $T>0,$ the family $(v_\nu,\rho_\nu)_{\nu}$ converges strongly in $L_{T}^{\infty}L^{2}$ when $\nu\to 0,$ to the solution $(v,\rho)$ of the system \eqref{fd1} with initial data $(v^0,\rho^0).$ In the second step, we will show how to get the strong convergence in the Sobolev spaces $L_{T}^{\infty}H^{s}\times L_{T}^{\infty}H^{s-2},$ with $s>\frac52.$\\
We set $$W_{\nu}:=v_{\nu}-v,\;\;\Pi_{\nu}=p_{\nu}-p\;\;\textnormal{and}\;\,\eta_{\nu}=\rho_{\nu}-\rho.$$
Then we obtain the equations:
\begin{equation}\label{fp1}
\left\{ \begin{array}{ll} 
\partial_{t}W_{\nu}+v_{\nu}\cdot\nabla W_{\nu}+W_{\nu}\cdot\nabla v-\nu\Delta W_{\nu}+\nabla\Pi_{\nu}=\nu\Delta v+\eta_{\nu}\,e_{z}\\ 
\partial_{t}\eta_{\nu}+v_{\nu}\cdot\nabla\eta_{\nu}-\Delta\eta_{\nu}=-W_{\nu}\cdot\nabla\rho\\
\textnormal{div}\,W_{\nu}=0\\
(W_{\nu},\eta_{\nu})_{| t=0}=0.  
\end{array} \right.
\end{equation} 
First, we take the $L^{2}$ inner product of the first equation of \eqref{fp1} with $W_{\nu},$ integrating by parts and using H\"older inequality, we get
$$\frac{1}{2}\frac{d}{dt}\Vert W_{\nu}(t)\Vert^{2}_{L^2}+\nu\Vert\nabla W_{\nu}\Vert^{2}_{L^2}\le\nu\Vert\Delta v\Vert_{L^2}\Vert W_{\nu}\Vert_{L^2}+\Vert\nabla v\Vert_{L^\infty}\Vert W_{\nu}\Vert^{2}_{L^2}+\Vert\eta_{\nu}\Vert_{L^2}\Vert W_{\nu}\Vert_{L^2}.$$
 This gives,
 $$\frac{d}{dt}\Vert W_{\nu}(t)\Vert_{L^2}\le\nu\Vert\Delta v\Vert_{L^2}+\Vert\nabla v\Vert_{L^\infty}\Vert W_{\nu}\Vert_{L^2}+\Vert\eta_{\nu}\Vert_{L^2}.$$
Integrating in time this last inequality , we obtain
\begin{equation*}
\Vert W_{\nu}(t)\Vert_{L^2}\le\nu\Vert\Delta v\Vert_{L_{t}^{1}L^2}+\Vert\eta_{\nu}\Vert_{L_{t}^{1}L^2}+\int_{0}^{t}\Vert\nabla v(\tau)\Vert_{L^\infty}\Vert W_{\nu}(\tau)\Vert_{L^2}d\tau.
\end{equation*}
From the inequality
$$\Vert\Delta v\Vert_{L^2}\le C\Vert v\Vert_{H^s},\;\;s\ge 2$$
and by using Gronwall inequality, Proposition \ref{h1}, we find
\begin{eqnarray}\label{fp6}
\nonumber \Vert W_{\nu}\Vert_{L_{t}^{\infty}L^2}&\lesssim& \Big(\nu\Vert\Delta v\Vert_{L_{t}^{1}L^2}+\Vert\eta_{\nu}\Vert_{L_{t}^{1}L^2}\Big)e^{\Vert\nabla v\Vert_{L_{t}^{1}L^{\infty}}}\\
\nonumber &\le& \Big(\nu t\Vert v\Vert_{\widetilde{L}_{t}^{\infty}H^s}+\Vert\eta_{\nu}\Vert_{L_{t}^{1}L^{2}}\Big)\Phi_{6}(t)\\
&\le& \Big(\nu+\Vert\eta_{\nu}\Vert_{L_{t}^{1}L^{2}}\Big)\Phi_{6}(t).
\end{eqnarray}
It remains to estimate $\Vert\eta_{\nu}\Vert_{L_{t}^{1}L^{2}}.$ For this purpose, 
we apply the maximum principle to the second equation of \eqref{fp1}, we get
\begin{eqnarray}\label{Maxi}
\nonumber \Vert\eta_{\nu}(t)\Vert_{L^2}&\le& \int_{0}^{t}\Vert W_{\nu}\cdot\nabla\rho(\tau)\Vert_{L^2}d\tau\\
\nonumber &\lesssim& \Vert W_{\nu}\Vert_{L_{t}^{\infty}L^2}\Vert\nabla\rho\Vert_{L_{t}^{1}L^\infty}\\
&\le& \Phi_{4}(t)\Vert W_{\nu}\Vert_{L_{t}^{\infty}L^2},
\end{eqnarray} 
where we have used Proposition \ref{prop c}. Putting now \eqref{Maxi} into \eqref{fp6} and using Gronwall inequality, we get for all $t\in[0,T]$
\begin{eqnarray}\label{fp2}
\nonumber \Vert W_{\nu}\Vert_{L_{t}^{\infty}L^2}&\le& \Big(\nu+\int_{0}^{t}\Phi_{4}(\tau)\Vert W_{\nu}\Vert_{L_{\tau}^{\infty}L^2}d\tau\Big)\Phi_{6}(t)\\
&\le& \nu\Phi_{7}(t).
\end{eqnarray}
This gives in view of \eqref{Maxi} that,
$$\Vert\eta_{\nu}\Vert_{L_{t}^{\infty}L^2}\le\nu\Phi_{7}(t)\;\;\forall t\in[0,T]..$$
Therefore 
$$\Vert W_{\nu}\Vert_{L_{t}^{\infty}L^2}+\Vert\eta_{\nu}\Vert_{L_{t}^{\infty}L^2}\le\nu\Phi_{7}(t)\;\;\forall t\in[0,T].$$
This achieves the proof of the strong convergence in $L_{loc}^{\infty}(\RR_+;L^2).$\\
Let us now turn to the proof of the strong convergence in the Sobolev spaces. Let $M\in\NN,$ that will be chosen later, then by definition of Sobolev space we have $\forall t\in\RR_+,$
\begin{eqnarray*}
\Vert(v_{\nu}-v)(t)\Vert^{2}_{H^s}&=&\sum_{q\le M}2^{2qs}\Vert\Delta_{q}(v_{\nu}-v)(t)\Vert^{2}_{L^2}+\sum_{q>M}2^{2qs}\Vert\Delta_{q}(v_{\nu}-v)(t)\Vert^{2}_{L^2}\\
&\lesssim& 2^{2Ms}\Vert(v_{\nu}-v)(t)\Vert^{2}_{L^2}+\frac{1}{\Psi^{2}(M)}\sum_{q>M}\Psi^{2}(q)2^{2qs}\Big(\Vert\Delta_{q}v_{\nu}(t)\Vert_{L^2}+\Vert\Delta_{q}v(t)\Vert_{L^2}\Big)^{2}\\
&\lesssim& 2^{2Ms}\Vert W_{\nu}(t)\Vert^{2}_{L^2}+\frac{1}{\Psi^{2}(M)}\Big(\Vert v_{\nu}(t)\Vert^{2}_{H^{s,\Psi}}+\Vert v(t)\Vert^{2}_{H^{s,\Psi}}\Big).
\end{eqnarray*}
We have used the fact that the profile $\Psi$ is nondecreasing. Now, we use \eqref{fp2} and Proposition \ref{h1}, to get
$$\Vert(v_{\nu}-v)(t)\Vert^{2}_{H^s}\le\Big(2^{2Ms}\nu^{2}+\frac{1}{\Psi^{2}(M)}\Big)\Phi_{7}(t).$$
It is enough to choose $M$ such that
$$e^{2Ms}\approx\frac{1}{\nu}.$$
Therefore we obtain that
$$\Vert(v_{\nu}-v)(t)\Vert^{2}_{H^s}\le\Big(\nu+\frac{1}{\Psi^{2}(\frac{1}{2s}\log(\frac{1}{\nu}))}\Big)\Phi_{7}(t).$$
Similarly for $\Vert(\rho_{\nu}-\rho)(t)\Vert_{H^{s-2}},$ we obtain finally
\begin{eqnarray*}
\Vert(\rho_{\nu}-\rho)(t)\Vert^{2}_{H^{s-2}}&\le&\Big(\nu+\frac{1}{\Psi^{2}(\frac{1}{2(s-2)}\log(\frac{1}{\nu}))}\Big)\Phi_{7}(t)\\
&\le&\Big(\nu+\frac{1}{\Psi^{2}(\frac{1}{2s}\log(\frac{1}{\nu}))}\Big)\Phi_{7}(t).
\end{eqnarray*}
In the last line, we have used the fact that the profile $\Psi$ is nondecreasing. Now for any $\lambda>0,$ the function defined by $\Psi_{\lambda}(x):=\Psi(\lambda x)$ belongs to the same class $\mathcal{U}_\infty.$ Therefore, we get by modify $\Psi:$
$$\Vert(v_{\nu}-v)(t)\Vert_{H^{s}}+\Vert(\rho_{\nu}-\rho)(t)\Vert_{H^{s-2}}\le\Big(\sqrt{\nu}+\frac{1}{\Psi(\log(\frac{1}{\nu}))}\Big)\Phi_{7}(t).$$ 
It follows that 
$$\Vert v_{\nu}-v\Vert_{L_{T}^{\infty}H^s}+\Vert\rho_{\nu}-\rho\Vert_{L_{T}^{\infty}H^{s-2}}\to 0\quad\textnormal{as}\;\;\nu\to 0.$$  
This achieved the desired result of the proposition.
\end{proof}

\end{document}